\theoremstyle{plain}
\newtheorem{thm}{Theorem}[section]
\newtheorem{app}[thm]{Addendum}
\newtheorem{alg}[thm]{Algorithm}
\newtheorem{lem}[thm]{Lemma}
\newtheorem{prop}[thm]{Proposition}
\newtheorem{cor}[thm]{Corollary}
\newtheorem{thma}{Theorem}
\newtheorem{cora}[thma]{Corollary}
\newtheorem{con}[thm]{Convention}
\theoremstyle{definition}
\newtheorem{defn}[thm]{Definition}
\newtheorem*{exmp}{Example}
\theoremstyle{remark}
\newtheorem*{rem}{Remark}
\numberwithin{equation}{section}
\newcommand{\Qbb}{\mathbb{Q}}
\newcommand{\Zbb}{\mathbb{Z}}
\newcommand{\Nbb}{\mathbb{N}}
\newcommand{\id}{\mathrm{id}}
\newcommand{\Isoef}{\operatorname{Iso}^{\operatorname{ef}}}
\newcommand{\Iso}{\operatorname{Iso}}
\newcommand{\iso}{\operatorname{iso}}
\newcommand{\isoef}{\operatorname{iso}^{\operatorname{ef}}}
\newcommand{\aut}{\operatorname{aut}}
\newcommand{\autef}{\operatorname{aut}^{\operatorname{ef}}}
\newcommand{\Autef}{\operatorname{Aut}^{\operatorname{ef}}}
\newcommand{\Aut}{\operatorname{Aut}}
\newcommand{\GL}{\operatorname{GL}}
\newcommand{\ev}{\operatorname{ev}}
\newcommand{\EM}{M^{\operatorname{ef}}}
\newcommand{\EC}{C^{\operatorname{ef}}}
\newcommand{\ED}{D^{\operatorname{ef}}}
\newcommand{\econ}{\operatorname{cone}^{\operatorname{ef}}}
\newcommand{\E}[1]{{#1}^{\operatorname{ef}}}
\newcommand{\Free}{\operatorname{Free}}
\newcommand{\Tor}{\operatorname{Tor}}
\newcommand{\Hom}{\operatorname{Hom}}
\newcommand{\Stab}{\operatorname{Stab}}
\newcommand{\eiota}{\iota^{\operatorname{ef}}}
\newcommand{\ekappa}{\kappa^{\operatorname{ef}}}
\newcommand{\erho}{\rho^{\operatorname{ef}}}
\newcommand{\rank}{\operatorname{rank}}
\newcommand\algline{\noindent \hrulefill~ {\it{Algorithmic part}}~ \hrulefill}
\providecommand*{\rightarrowfill@}{%
  \arrowfill@\relbar\relbar\rightarrow
}
\providecommand*{\xrightarrow}[2][]{%
  \ext@arrow 0579\rightarrowfill@{#1}{#2}%
}
\providecommand*{\twoheadrightarrowfill@}{%
  \arrowfill@\relbar\relbar\twoheadrightarrow
}
\providecommand*{\xtwoheadrightarrow}[2][]{%
  \ext@arrow 0579\twoheadrightarrowfill@{#1}{#2}%
}
\newcommand{\proofpart}[1]{%
  \par
  \addvspace{\medskipamount}%
  \noindent\emph{Part #1:}\par\nobreak
  \addvspace{\smallskipamount}%
  \@afterheading
}
\begin{document}
\title[Are two finite $H$-spaces homotopy equivalent?]{Are two $H$-spaces homotopy equivalent?\\ An algorithmic view point}
\begin{abstract}
This paper proposes an algorithm that decides if two simply connected spaces represented by finite simplicial sets of finite $k$-type and finite dimension $d$ are homotopy equivalent. 
If the spaces are homotopy equivalent, the algorithm finds a homotopy equivalence between their Postnikov stages in dimension $d$. As a consequence, we get an algorithm deciding 
if two spaces represented by finite simplicial sets are stably homotopy equivalent.
\end{abstract}
\author{M\'{a}ria \v{S}imkov\'{a}}
\address{Department of Mathematics and Statistics, Masaryk University, Kotl\'{a}\v{r}sk\'{a} 2, 611 37 Brno,  Czech Republic}
\curraddr{}
\email{simkova@math.muni.cz}
\thanks{}
\keywords{Postnikov tower, Postnikov invariants, effective homological framework, algorithmic computation, homotopy equivalence problem, spaces of finite k-type, $H$-spaces}
\subjclass[2010]{55S45, 68U05, 55U10.}

\date{\today}

\dedicatory{}

\maketitle

\section{Introduction}

The question of whether two topological spaces are homotopy equivalent has motivated the deve\-lop\-ment of classical algebraic topology. It has contributed to the search for various algebraic invariants. A somewhat different approach was started in 1957 by E.~H.~Brown. In \cite{B}, he asked whether there is an algorithm that can decide if two simply connected spaces described by finite simplicial sets are homotopy equivalent. He answered it in the case that the spaces have finite homotopy groups. The algorithm uses Postnikov towers and is an exhaustive search through a finite set of combinations. In the paper \cite{NW} from the end of the 90s, A.~Nabutovsky and S.~Weinberger outlined how to decide if two simply connected PL-manifolds of dimension at least five are homotopy equivalent and then even more if they are homeomorphic or diffeomorphic. Their idea for homotopy equivalence is based again on Postnikov towers, Postnikov invariants, and additionally on algorithms by Grunewald and Segal (\cite{GS}) working with arithmetic subgroups of linear algebraic groups. However, their idea was only briefly sketched. Since that time, significant progress has been made in the algorithmic construction of Postnikov towers, see \cite{sphere}, \cite{poly} and \cite{aslep}. This has allowed us to approach the problem in more detail. At first, we establish a necessary and sufficient condition to lift a given homotopy equivalence between $n$-stages of Postnikov towers of two spaces into a homotopy equivalence between $(n+1)$-stages. This enables us to decide algorithmically about homotopy equivalence if both spaces satisfy certain finiteness conditions.

It turns out that the actions of certain groups on the set of cohomological classes play a crucial role in solving the problem of homotopy equivalence.
In general,  both the groups and the sets are infinite. Here we will use suitable group algorithms to solve the question of homotopy equivalence in the case of infinite (but finitely generated) groups and finite sets. This situation occurs for spaces of finite $k$-type. These are spaces with torsion Postnikov classes through a specific finite dimension. 
The following spaces belong among them 
\begin{itemize}
    \item $H$-spaces, especially Lie groups, topological groups, and simplicial groups,
    \item $m$-connected spaces with dimension at most $2m$,
    \item $H$-spaces modulo the class of finite groups (see the definition in Section 6),
    \item spaces rationally homotopy equivalent to products of Eilenberg-McLane spaces.
\end{itemize}
Our restriction to spaces of finite $k$-type corresponds to a recent result by Manin (\cite{FM}, Theorem A) on the extendability of maps.  

This paper's main result is:
\begin{thma}
Let $X$ and $Y$ be simply connected finite simplicial set of dimension $d$. Suppose that they are of finite $k$-type through dimension $d$. Then the question of whether $X$ and $Y$ are homotopy equivalent is algorithmically decidable. 
If the spaces are homotopy equivalent, we can find a homotopy equivalence $f\colon X_d\to Y_d$ between their Postnikov stages in dimension $d$. 

Moreover, we can algorithmically find all group generators of the group $\Aut(X_d)$ of  self homotopy equivalences $X_d$ up to homotopy.
\end{thma}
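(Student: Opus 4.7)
The plan is to proceed by induction on the Postnikov stages $n=2,\ldots,d$, producing at each level both a candidate homotopy equivalence $f_n\colon X_n\to Y_n$ (or a proof that none exists) and a finite set of generators of $\Aut(X_n)$. Since $X$ and $Y$ have dimension at most $d$ and the Postnikov projections $X\to X_d$, $Y\to Y_d$ are $(d+1)$-equivalences, $X\simeq Y$ if and only if $X_d\simeq Y_d$, so the induction need only reach stage $n=d$. The effective homological framework of \cite{sphere,poly,aslep} supplies algorithmic access to $X_n$, $Y_n$ and to the $k$-invariants $k_{n+2}^X\in H^{n+2}(X_n;\pi_{n+1}X)$ and $k_{n+2}^Y\in H^{n+2}(Y_n;\pi_{n+1}Y)$.

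The base case $n=2$ reduces to comparing $K(\pi_2 X,2)$ with $K(\pi_2 Y,2)$: the finitely generated abelian groups $\pi_2 X$ and $\pi_2 Y$ are computable, isomorphism between them is decidable, and $\Aut(X_2)=\GL(\pi_2 X)$ admits classical finite generators. For the inductive step I would invoke the lifting criterion announced in the introduction: $f_n$, possibly after precomposition by some $\alpha\in\Aut(X_n)$, lifts to a homotopy equivalence $f_{n+1}\colon X_{n+1}\to Y_{n+1}$ if and only if there exist $\alpha\in\Aut(X_n)$ and $\varphi\in\Iso(\pi_{n+1}X,\pi_{n+1}Y)$ satisfying
\[
  \alpha^*\bigl(f_n^*(k_{n+2}^Y)\bigr)=\varphi_*\bigl(k_{n+2}^X\bigr)
\]
in $H^{n+2}(X_n;\pi_{n+1}Y)$. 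The inductive step is therefore an orbit-equality question for the action of $\Aut(X_n)$ on that cohomology group, with $\varphi$ running over the finite set $\Iso(\pi_{n+1}X,\pi_{n+1}Y)$; when the orbits coincide, the stabilizer of a representative, combined with the kernel of the restriction $\Aut(X_{n+1})\to\Aut(X_n)$ coming from the fibration $K(\pi_{n+1}X,n+1)\to X_{n+1}\to X_n$, yields generators of $\Aut(X_{n+1})$.

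The main obstacle is that $\Aut(X_n)$ is typically infinite already at $n=2$, yet one must decide orbits and produce stabilizer generators for its action on an a priori infinite cohomology group. This is where the hypothesis of finite $k$-type becomes essential: it forces the relevant $k$-invariants to be torsion, so the entire orbit under the linear $\Aut(X_n)$-action lies in the torsion subgroup $T$ of the ambient cohomology, which is finite because $\pi_{n+1}X$ is finitely generated. The action therefore factors through a homomorphism $\Aut(X_n)\to\operatorname{Perm}(T)$ with finite image; orbit equality is decidable by direct enumeration, and generators of the kernel, a finite-index subgroup of $\Aut(X_n)$, are obtained by the standard algorithms for finite-index subgroups of finitely generated groups or, in the arithmetic situations that arise for $n\ge 3$, by the Grunewald--Segal techniques cited in the introduction (\cite{GS}). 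Iterating through $n=d$ produces the decision, the homotopy equivalence $f\colon X_d\to Y_d$ when one exists, and generators of $\Aut(X_d)$.
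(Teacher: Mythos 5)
Your overall strategy coincides with the paper's: induct on Postnikov stages, reduce the lifting step to an orbit question for a group action on cohomology, and use the finite $k$-type hypothesis to confine that action to the finite torsion subgroup so that elementary orbit--stabilizer enumeration suffices. However, there are two genuine gaps. First, you assert that $\varphi$ runs over ``the finite set $\Iso(\pi_{n+1}X,\pi_{n+1}Y)$.'' This set is infinite whenever $\pi_{n+1}X$ has a nontrivial free summand (it is a torsor over $\Aut(\pi_{n+1}X)\supseteq\GL(r,\Zbb)$), so direct enumeration over $\varphi$ fails. The correct repair, and what the paper does, is to fix one reference isomorphism $\sigma$ and enlarge the acting group to $\Autef(X_{n})\times\Aut(\pi_{n+1})$, which is still finitely generated (generators of $\Aut$ of a finitely generated abelian group are classical) and still acts on the finite set $\Tor(H^{n+2}(X_{n};\pi_{n+1}))$; the quantifier over $\varphi$ is thereby absorbed into the orbit computation.

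Second, your argument treats $\Aut(X_n)$ as a group you can compute with, but you never say how its elements are represented so that $\alpha^*$ on cohomology is computable, why the represented maps are closed under composition and inversion, or why every homotopy class of self-equivalence is reached by the generators you produce. This is not a formality: the paper spends all of Section 5 constructing the subgroup $\autef(X_n)$ of ``effective'' equivalences of the explicit form $f_n(x,y)=(f_{n-1}(x),\gamma_*(y+\omega(x)))$, proving it is closed under composition and inverses, and proving (via a comparison with the Moore--Postnikov tower and a transgression computation, which also yields the necessity direction of your lifting criterion) that every simplicial weak equivalence is homotopic to an effective one, so $\Aut(X_n)=\Autef(X_n)$. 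Only then do the Schreier generators of the stabilizer, together with the fibrewise translations generating the ``kernel'' you mention (parametrized by generators of the finitely generated, possibly infinite group $H^{n+1}(X_n;\pi_{n+1}X)$), provably generate all of $\Aut(X_{n+1})$. Finally, your fallback to Grunewald--Segal is unnecessary and runs against the point of the finite $k$-type hypothesis: once the action is on a finite set, the stabilizer has finite index and Schreier's lemma alone produces its generators at every stage, including $n\ge 3$.
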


As a consequence of this theorem, we get:
\begin{cora}
Let $X$ and $Y$ be finite simplicial sets. Then the question, if $X$ and $Y$ are stably homotopy equivalent, is algorithmically decidable.
\end{cora}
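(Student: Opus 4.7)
The plan is to reduce stable homotopy equivalence to ordinary homotopy equivalence of sufficiently iterated suspensions, so that Theorem~A applies directly. First I would dispose of the degenerate cases ($\dim X=0$ or disconnected $X,Y$) by a finite matching of connected components and pass to connected pointed simplicial sets. Then, setting $d=\max(\dim X,\dim Y)\ge 1$, I would algorithmically compute the reduced iterated suspensions $\Sigma^{d}X$ and $\Sigma^{d}Y$ as finite simplicial sets. Each of them is $d$-connected of dimension at most $2d$, so they belong to the ``$m$-connected spaces of dimension at most $2m$'' class listed in the introduction; in particular they are simply connected and of finite $k$-type through dimension $2d$, so the hypotheses of Theorem~A are satisfied.

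The central technical step would be a Freudenthal-type desuspension argument establishing
\[
\Sigma^{d}X\simeq\Sigma^{d}Y \iff X \text{ and } Y \text{ are stably homotopy equivalent.}
\]
The ``only if'' direction is by definition. For the ``if'' direction, the classical Freudenthal suspension theorem guarantees that, for every $N\ge d$, the suspension map $[\Sigma^{N}X,\Sigma^{N}Y]_{*}\to[\Sigma^{N+1}X,\Sigma^{N+1}Y]_{*}$ is a bijection (using $\dim\Sigma^{N}X\le d+N\le 2N$ against the $N$-connectivity of $\Sigma^{N}Y$). Iterating shows that $[\Sigma^{d}X,\Sigma^{d}Y]_{*}\to[\Sigma^{N}X,\Sigma^{N}Y]_{*}$ is a bijection for every $N\ge d$. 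Consequently any homotopy equivalence $\Sigma^{N}X\simeq\Sigma^{N}Y$, together with its homotopy inverse, lifts uniquely to a homotopy equivalence $\Sigma^{d}X\simeq\Sigma^{d}Y$. With this equivalence established, the question of stable homotopy equivalence of $X$ and $Y$ reduces to whether $\Sigma^{d}X\simeq\Sigma^{d}Y$, which Theorem~A decides algorithmically.

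I expect the main obstacle to be this desuspension step: one has to check that bijectivity of $\Sigma$ on pointed homotopy classes transports not only a given equivalence but also its homotopy inverse, so that the lifted map is a genuine two-sided homotopy equivalence. This follows from the functoriality of $\Sigma$ and the uniqueness of the preimages, but it is the only piece that is not purely bookkeeping. The remaining ingredients---implementing reduced suspension on finite simplicial sets, verifying the dimension and connectivity bounds, and checking the finite $k$-type hypothesis for $\Sigma^{d}X$ and $\Sigma^{d}Y$---are entirely mechanical.
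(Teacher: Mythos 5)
Your approach is the same as the paper's: suspend until the spaces are in the stable range ($m$-connected of dimension at most $2m$), observe that such spaces are of finite $k$-type through their dimension, and apply Theorem~A. The Freudenthal desuspension argument you spell out is precisely the (implicit) justification the paper uses for identifying ``stably homotopy equivalent'' with ``the finite suspensions are homotopy equivalent,'' so making it explicit is a welcome addition rather than a divergence. One correction: your preprocessing of disconnected inputs by matching connected components is both incorrect and unnecessary. It is incorrect because stable equivalence does not respect components: since $\Sigma(A\sqcup B)\simeq \Sigma A\vee\Sigma B\vee S^1$, the spaces $X\sqcup Y$ and $(X\vee Y)\sqcup\{\ast\}$ are stably homotopy equivalent although no matching of components into stably equivalent pairs exists. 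It is unnecessary because a disconnected finite complex becomes highly connected after enough suspensions anyway; one only needs $r$ large enough that $\dim \Sigma^{r}X\le 2\operatorname{conn}(\Sigma^{r}X)$ (for disconnected $X$ this means $r\ge \dim X+2$ rather than your $r=d$), which is exactly the paper's ``for $r$ sufficiently large.'' With that adjustment your argument matches the paper's proof.
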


\subsection*{Plan of the paper}
To prove our main result, we use the fact that simply connected spaces of dimension $d$ are homotopy equivalent if and only if they are homotopy equivalent through $d$-stages of their Postnikov towers. Thus, we try to find a homotopy equivalence from the lower to higher stages of Postnikov towers.
Sections 2 and 3 have preparatory character and summarize essential notions from simplicial sets, effective homology framework, and Postnikov towers.
Section 4 gives the necessary and sufficient conditions for lifting homotopy equivalence between given stages of Postnikov towers by one step higher. In Section 5, we introduce the notion of effective homotopy equivalence between Postnikov stages. These are homotopy equivalences that can be constructed algorithmically by induction. It turns out that the effective self-equivalences of a Postnikov stage with the composition operation form a group that behaves well. Notably, the homotopy class of any homotopy equivalence can be represented as the homotopy class of an effective homotopy equivalence. In the second part of this section, the necessary and sufficient conditions are rewritten in terms of effective homotopy equivalences.

Section 6 is an excursion into algorithmic group theory. If we have a group with a finite list of its generators and an action of this group on a finite set, there are simple algorithms that compute a given element's orbit and its stabilizer. These algorithms are much more straightforward than the algorithms by Grunewald and Segal. In Section 7, the notion of finite $k$-type is introduced (it means that Postnikov invariants are torsion elements), which enables us to use the mentioned group algorithms acting on a torsion part of a certain cohomology group, i.e. on a finite set. In the general case, the set under the action is the whole infinite cohomology group. Section 8 completes the description of the algorithm for the main result. In the last section, another class of spaces that admits algorithmic decisions about homotopy equivalence is characterized. 

\smallskip
In further work, we want to obtain an algorithm for all simply connected finite simplicial sets. To resolve it, we need more sophisticated algorithms for orbits and stabilizers of finitely generated groups and a suitable way to apply them. We believe that the notion of effective homotopy equivalence introduced here will provide the right environment to create such an algorithm.

\section{Preliminaries on simplicial sets with effective homology}

We will work with simplicial sets instead of simplicial complexes since they are more powerful and flexible. For basic concepts on simplicial sets, we refer to comprehensive sources \cite{M, C, GJ}. In this section, we show briefly what we need to use an algorithmic approach to the homotopy theory of simplicial sets.

\subsection*{Simplicial sets}
The standard $k$-dimensional simplex is denoted $\Delta^k$ and its $n$-simplices (non-degenerate) are $(n+1)$-element subsets of its vertex set $\{0,1,\dots,k\}$.
Let $\pi$ be an abelian group.
In the simplicial set context, the Eilenberg-MacLane simplicial set $K(\pi,n)$ is defined through its \emph{standard minimal} model in which $k$-simplices are given by cocycles
\[K(\pi,n)_k=Z^n(\Delta^k,\pi).\]
It will be essential that $K(\pi,n)$ is a simplicial group in the mentioned representation and hence a Kan complex. Similarly, we define a simplicial set $E(\pi,n)$ where its $k$-simplices are given by cochains
\[E(\pi,n)_k=C^n(\Delta^k,\pi).\]
The previous definitions lead to a natural principal fibration known as the \emph{Eilenberg-MacLane fibration} $\delta\colon E(\pi,n)\to K(\pi,n+1)$ for $n\geq 1$. A further powerful property of that fibration is its \emph{minimality}.  Recall that \emph{minimal} fibrations are stable under pullbacks and compositions. For the definition and further properties see \cite[I.10]{GJ}.\\

Furthermore, we remind two standard constructions of simplicial sets - a mapping cylinder and a mapping cone. For a simplicial map $f\colon X\to Y$ the simplicial \emph{mapping cylinder} is a simplicial set  $\operatorname{cyl}(f)=(X\times\Delta^1\cup Y)/\sim$ where the equivalence relation $\sim$ is induced by $(x,0)\sim f(x)$ for all $x\in X$. If we identify $X$ with $X\times\{1\}$ then the simplicial \emph{mapping cone} of $f$ is $\operatorname{cone}(f)=\operatorname{cyl}(f)/X$.

\medskip
\algline

\subsection*{Effective homology}
Here we look at the basic notions of the effective homology framework. 
This paradigm was developed by Sergeraert and his coworkers to deal with infinitary objects, see \cite{RS} (or \cite{poly}) for more details. \\

A \emph{locally effective} simplicial set is a simplicial set whose simplices
have a specified finite encoding, and whose face and degeneracy operators are specified by algorithms.

We will work with non-negatively graded chain complexes of free abelian groups. Such a chain complex is \emph{locally effective} if elements of the graded module can be represented in a computer and the operations of zero, addition, and differential are computable.\\

In all parts of the paper where we deal with algorithms, all simplicial sets are locally effective, and all chain complexes
are non-negatively graded locally effective chain complexes of free $\Zbb$-modules.
All simplicial maps, chain maps, chain homotopies, etc., are computable.\\

An \emph{effective} chain complex is a (locally effective) free chain complex equipped
with an algorithm that generates a list of elements of the distinguished basis in any given dimension (in particular, the distinguished bases are finite in each dimension).

\begin{defn}
[\cite{RS}]
Let $(C,d_C)$ and $(D,d_D)$ be chain complexes. A triple of mappings $(f\colon C\to D,g\colon D\to C,h\colon C\to C)$ is called a \emph{reduction} if the following holds
\begin{itemize}
\item[i)] $f$ and $g$ are chain maps of degree $0$,
\item[ii)] $h$ is a map of degree 1,
\item[iii)] $fg=\id_D$ and $\id_C-gf=[d_C,h]=d_Ch-hd_C$,
\item[iv)] $fh=0$, $hg=0$ and $hh=0$.
\end{itemize}
The reductions are denoted as $(f,g,h)\colon (C,d_C)\Rightarrow (D,d_D)$.

A \emph{strong homotopy equivalence} $C\Longleftrightarrow D$ between chain complexes $C$, $D$ is the chain complex $E$ together with a pair of reductions $C\Leftarrow E\Rightarrow D$.

Let $C$ be a chain complex. We say that $C$ is \emph{equipped with effective
homology} if there is a specified strong equivalence $C\Longleftrightarrow \EC$
of $C$ with some effective chain complex $\EC$.

Similarly, we say that a \emph{simplicial set has} (or can be equipped with) \emph{effective homology} if its chain complex generated by nondegenerate simplices is equipped with  effective homology.
\end{defn}

It is clear that all finite simplicial sets have effective homology. It is essential from the algorithmic point of view that many infinite simplicial sets also have effective homology. Moreover, there is a way to construct them from the underlying simplicial sets and their effective chain complexes.

\begin{prop}[\cite{poly}, Section 3]
Let $n\geq 1$ be a fixed integer and $\pi$ a finitely generated abelian group. The standard simplicial model of the Eilenberg-MacLane space can be equipped with effective homology.

If $P$ is a simplicial set  equipped  with  effective  homology and $f\colon P\to K(\pi,n+1)$ is  computable, then  the  pullback $Q$ of $\delta\colon E(\pi,n)\to K(\pi,n+1)$ along $f$ can  be  equipped  with  effective homology.
\end{prop}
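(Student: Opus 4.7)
The plan is to split the proof into the two parts stated and, in both cases, to build the effective chain model by combining two engines from the effective homology toolkit: the (twisted) Eilenberg–Zilber theorem, which converts chain complexes of twisted Cartesian products into twisted tensor products of the chain complexes of the base and fiber, and Shih's basic perturbation lemma, which transports a reduction across a ``small'' perturbation of the differential. Tensor products and compositions of reductions are themselves constructible, so the only non-formal inputs will be (a) the effective chain model for $K(\pi,n)$ and (b) a computable twisting cochain describing the pullback.

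For the first assertion, I would first reduce to the case of a cyclic group by writing $\pi=\Zbb^{r}\oplus\bigoplus_{i}\Zbb/m_i$ and using $K(\pi,n)\cong \prod K(\pi_i,n)$; since the Eilenberg–Zilber reduction $C_{*}(X\times Y)\Rightarrow C_{*}(X)\otimes C_{*}(Y)$ is explicit, a product of simplicial sets with effective homology has effective homology. For cyclic $\pi$ I would argue by induction on $n$: the base case $K(\pi,1)$ is the bar construction on $\pi$, for which an explicit small effective model is classical. For the inductive step I exploit the principal Eilenberg–MacLane fibration $K(\pi,n)\hookrightarrow E(\pi,n)\xrightarrow{\delta} K(\pi,n+1)$, noting that $E(\pi,n)$ is contractible and that its chain complex is identified (via twisted Eilenberg–Zilber) with a twisted tensor product $C_{*}(K(\pi,n+1))\otimes_{\tau} C_{*}(K(\pi,n))$. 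Feeding the effective model of $C_{*}(K(\pi,n))$ into this expression and inverting the reduction (using that the total space is acyclic) yields an effective model for $C_{*}(K(\pi,n+1))$.

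For the second assertion, the pullback $Q$ carries the structure of a principal $K(\pi,n)$-fibration over $P$ whose classifying cocycle is read off from $f$. Brown's twisted Eilenberg–Zilber theorem then produces a reduction
\[
C_{*}(Q)\ \Longrightarrow\ C_{*}(P)\otimes_{\tau} C_{*}(K(\pi,n)),
\]
where the twisting cochain $\tau$ is computable from the computable map $f$. Tensoring the given effective homology of $P$ with the effective homology of $K(\pi,n)$ produced in the first part gives a strong equivalence between $C_{*}(P)\otimes C_{*}(K(\pi,n))$ and an effective chain complex $E$. The twisting $\tau$ is a perturbation of the plain tensor product differential, and the basic perturbation lemma lets me transport this perturbation through the strong equivalence, producing a strong equivalence between $C_{*}(P)\otimes_{\tau} C_{*}(K(\pi,n))$ and a suitably perturbed $E$; composing with the Brown reduction above gives the sought effective homology of $Q$.

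The main obstacle, and the reason the construction is not purely formal, is verifying the local nilpotency hypothesis of the basic perturbation lemma so that the series defining the transported reduction actually terminates on each input element; this is what forces the restrictions (finitely generated $\pi$, standard minimal model of $K(\pi,n)$, filtration by skeleta of $P$) and is what ultimately makes the new homotopy $h$ and maps $f,g$ computable rather than merely existent. Once this point is set up, the remaining work is the bookkeeping of composing reductions and checking that every output operation reduces to finitely many queries to the input algorithms, which is routine inside the effective homology framework.
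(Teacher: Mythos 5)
The paper itself offers no proof of this proposition --- it is imported from \cite{poly} --- and your outline reproduces the strategy of that source and of Sergeraert's effective homology framework: explicit small models for $K(\pi,1)$ with $\pi$ cyclic, the Eilenberg--Zilber reduction for products to handle general finitely generated $\pi$, and the twisted Eilenberg--Zilber theorem combined with the basic perturbation lemma to treat the pullback along a computable $f$. The one step stated too loosely is the passage from $K(\pi,n)$ to $K(\pi,n+1)$: ``inverting'' the reduction $C_*(E(\pi,n))\Rightarrow C_*(K(\pi,n+1))\otimes_\tau C_*(K(\pi,n))$ by appealing to acyclicity of the total space is not an operation the framework supplies (one cannot formally divide an acyclic twisted tensor product by a known fiber); the actual argument uses that $K(\pi,n+1)=\overline{W}K(\pi,n)$ together with the theorem that the classifying-space/bar construction preserves effective homology --- for a connected effective chain complex the bar construction is again degreewise effective --- which is the effective incarnation of the same classical idea and repairs this step.
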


The algebraic mapping cone $\operatorname{cone}(\varphi)$ of the chain map $\varphi\colon C\to D$ between chain complexes $(C_*,d_C)$ and $(D_*,d_D)$ is defined as the chain complex $C_{*-1}\oplus D_*$ with  the differential $(x,y)\mapsto(-d_C(x),d_D(y)+\varphi(x))$.

\begin{prop}[\cite{RS}, Theorems 63 and 80]\label{ConeEf}
Let $C$ and $D$ be chain complexes with  effective  homology $\EC$ and $\ED$, respectively,  and $\varphi\colon C\to D$ be  a chain  map.  Then  the $\operatorname{cone}(\varphi)$ can be equipped with effective homology $\econ:=\EC_{*-1}\oplus \ED_*$ in such a way that the strong equivalence $\operatorname{cone}(\varphi)\Longleftrightarrow\econ$ restricts to the original string equivalence $D\Longleftrightarrow\ED$.
\end{prop}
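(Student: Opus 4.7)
The plan is to produce, alongside the construction of $\econ$, an intermediate chain complex together with a pair of reductions, one to $\cone(\varphi)$ and one to $\econ$. Unwinding the definitions, each of the strong equivalences $C\Longleftrightarrow\EC$ and $D\Longleftrightarrow\ED$ is by definition a chain complex together with two reductions; denote these intermediates by $E_C$, $E_D$, with reduction data $(f_C,g_C,h_C)\colon E_C\Rightarrow C$, $(f_C',g_C',h_C')\colon E_C\Rightarrow\EC$, and analogously for $D$. The natural candidate for the intermediate object between the two cones is $\cone(\hat\varphi)$ for a chain map $\hat\varphi\colon E_C\to E_D$ lifting $\varphi$.

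First I would set $\hat\varphi:=g_D\circ\varphi\circ f_C$. It is a chain map as a composition of chain maps, and the reduction identities immediately yield
\[
f_D\hat\varphi=\varphi f_C,\qquad \hat\varphi g_C=g_D\varphi,\qquad h_D\hat\varphi=0,\qquad \hat\varphi h_C=0,
\]
the last two via $h_D g_D=0$ and $f_C h_C=0$. From these four identities one verifies by a direct computation that the triple
\[
F(x,y)=(f_C x,f_D y),\quad G(x,y)=(g_C x,g_D y),\quad H(x,y)=(-h_C x,h_D y)
\]
is a reduction $\cone(\hat\varphi)\Rightarrow\cone(\varphi)$: the chain-map property of $F$ and $G$ uses the first two identities, $FG=\id$ uses $f_C g_C=\id$ and $f_D g_D=\id$, and the cone version of $\id-GF=[d,H]$ reduces to the analogous identities on the two summands plus the vanishing $h_D\hat\varphi=\hat\varphi h_C=0$. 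By construction $F$, $G$, $H$ restrict on the $E_D$- and $D$-summands to $f_D$, $g_D$, $h_D$.

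Second, I would carry out the same construction for the primed reductions to produce a reduction $\cone(\hat\varphi)\Rightarrow\cone(\varphi^{\operatorname{ef}})$ for an appropriate chain map $\varphi^{\operatorname{ef}}\colon\EC\to\ED$, and then set $\econ:=\cone(\varphi^{\operatorname{ef}})=\EC_{*-1}\oplus\ED_*$. The effectiveness of $\econ$ is inherited from that of $\EC$ and $\ED$, since its distinguished basis in each dimension is the union of the two distinguished bases and its differential is computable from $d_{\EC}$, $d_{\ED}$ and the computable map $\varphi^{\operatorname{ef}}$. Composing the two reductions yields the required strong equivalence $\cone(\varphi)\Longleftrightarrow\econ$; the restriction clauses at each step together imply that on the $D$- and $\ED$-summands it collapses to the original strong equivalence $D\Longleftrightarrow\ED$.

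The main technical obstacle is the second step: the map $\hat\varphi=g_D\varphi f_C$ automatically satisfies the \emph{unprimed} compatibility identities above, but the analogous identities for $(f_C',g_C',h_C')$ and $(f_D',g_D',h_D')$ that are needed to assemble the second reduction are not automatic. Circumventing this requires an argument in the style of the Basic Perturbation Lemma of Rubio--Sergeraert: one defines $\varphi^{\operatorname{ef}}$ starting from the candidate $f_D'\hat\varphi g_C'$ and, if necessary, corrects both $\hat\varphi$ and $\varphi^{\operatorname{ef}}$ by pieces involving $h_C'$ and $h_D'$ so that the four primed identities hold on the nose. Once this is achieved, the remaining verifications are the routine identities already used in the first reduction.
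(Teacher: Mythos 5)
Your overall architecture is the right one and is exactly the route taken in the cited source (the paper itself gives no proof, deferring to Rubio--Sergeraert, Theorems 63 and 80): lift $\varphi$ to $\hat\varphi=g_D\varphi f_C$ on the intermediate complexes, form $\cone(\hat\varphi)$, and reduce it both to $\cone(\varphi)$ and to $\cone(\E{\varphi})=:\econ$. Your first reduction is correct and complete: the identities $f_D\hat\varphi=\varphi f_C$, $\hat\varphi g_C=g_D\varphi$, $h_D\hat\varphi=0$, $\hat\varphi h_C=0$ do hold for this particular $\hat\varphi$, and with them the componentwise formulas for $F,G,H$ verify all the reduction axioms; the restriction statement on the $D$-summand is also visible from these formulas.

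The gap is in the second reduction, and the fix you sketch points in the wrong direction. There is in general no chain map $E_C\to E_D$ that simultaneously satisfies the unprimed identities (needed for your first reduction) and the primed ones: any correction of $\hat\varphi$ forcing $h'_D\hat\varphi=0$ and $\hat\varphi h'_C=0$ will generically destroy $h_D\hat\varphi=0$ and $\hat\varphi h_C=0$, and since both reductions must start from the \emph{same} complex $\cone(\hat\varphi)$, you cannot repair one side without breaking the other. The missing idea is that no compatibility identities are needed at all: the Cone Reduction Theorem (Theorem~63 of the cited reference) produces, for \emph{arbitrary} reductions $(f,g,h)\colon E_C\Rightarrow A$, $(f',g',h')\colon E_D\Rightarrow B$ and an \emph{arbitrary} chain map $\psi\colon E_C\to E_D$, a reduction $\cone(\psi)\Rightarrow\cone(f'\psi g)$ whose formulas carry explicit correction terms of the shape $f'\psi h$, $h'\psi g$ and $h'\psi h$ in the off-diagonal entry (your case is the degenerate one where these vanish). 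Applying that theorem once with the unprimed data (where $f_D\hat\varphi g_C=\varphi$) and once with the primed data (defining $\E{\varphi}:=f'_D\hat\varphi g'_C$) gives both reductions from the single complex $\cone(\hat\varphi)$ with no hypotheses on $\hat\varphi$; since all correction terms map out of the $C$-component, the restriction to the $D$-summand is still the original strong equivalence $D\Longleftrightarrow\ED$. With that substitution your argument closes; as written, the second half is not established.
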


\section{Postnikov towers}
In this section, we recall the notion of a general Postnikov tower \cite[Chapter VI]{GJ} and the construction of a functorial Postnikov tower for a given Kan complex by Moore \cite{MO} and make a comparison of both definitions. Then we describe an algorithmic construction of a Postnikov tower for simplicial sets with effective homology \cite{poly}, which we will use in the following sections.

\begin{defn}
Let $Y$ be a simplicial set. A simplicial Postnikov tower  for $Y$ is the following collection of mappings and simplicial sets organized into the commutative diagram
\begin{equation}
\begin{tikzpicture}
\matrix (m) [matrix of math nodes, row sep=2em,
column sep=2em, minimum width=2em]
{  &[2cm] Y_n\\
    &[2cm] \vdots  \\
   &[2cm] Y_1 \\
  Y &[2cm] Y_0\\
};
  \begin{scope}[every node/.style={scale=.8}]
\path[->](m-4-1) edge node[above] {$\varphi_0$} (m-4-2);
\path[->](m-4-1) edge node[above] {$\varphi_1$} (m-3-2);
\path[->](m-4-1) edge node[auto] {$\varphi_n$} (m-1-2);
\path[->](m-3-2) edge node[auto] {$p_1$} (m-4-2);
\path[->](m-2-2) edge node[auto] {$p_2$} (m-3-2);
\path[->](m-1-2) edge node[auto] {$p_n$} (m-2-2);
\end{scope}
\end{tikzpicture}\label{3d2}
\end{equation}
such that for each $n\geq0$ the map $\varphi_n\colon Y\to Y_n$
induces isomorphisms $\varphi_{n*}\colon \pi_k(Y)\to \pi_k(Y_n)$
of homotopy groups with $0\leq k\leq n$, and $\pi_k(Y_n)=0$ for $k\geq n+1$.
The simplicial set $Y_n$ is called the $n$-th Postnikov stage. 
\end{defn}

Homotopy groups of simplicial sets in terms of simplicial maps can be defined only for Kan complexes. To other simplicial sets, this notion can be extended using the geometric realization functor $|\ |$ and its right adjoint simplicial functor $S$ if we put $\pi_n(Y)=\pi_n(S|Y|)$ since the image of $S$ is in the subcategory of Kan complexes. If we suppose that $Y$ is a Kan complex, we can carry out the following construction due to Moore.

\begin{defn}
Let $Y$ be a Kan complex. For each $n\in\Nbb_0$ define an equivalence relation $\sim_n$ on the simplices of $Y$ as follows: two $q$-simplices $x,y\colon\Delta^q\to Y$
are equivalent if their restrictions on $n$-skeleton $x|_{\operatorname{sk}_n(\Delta^q)}$ and $y|_{\operatorname{sk}_n(\Delta^q)}$ are equal.
Define a simplicial set $Y(n):=Y/\sim_n$ together with induced degeneracy and face maps from $Y$. There are evident maps $p(n)\colon Y(n)\to Y(n-1)$ and $\varphi(n)\colon Y\to Y(n)$.
\end{defn}

It turns out that this collection is a Postnikov tower for $Y$. Therefore, we will call it
\emph{Moore-Postnikov tower} per \cite{GJ}. Furthermore:

\begin{prop}\label{P}
Let $Y$ be a simply connected Kan complex. The tower $\{Y(n)\}_{n\in\Nbb}$ defined above is a Postnikov tower for $Y$ and the maps $p(n)$ are fibrations. Furthermore, if $Y$ is a  simply connected minimal Kan complex then $p(n)$ are minimal fibrations and pullbacks of diagrams along a certain maps $k(n-1)$:
\begin{center}
\begin{tikzpicture}
\matrix (m) [matrix of math nodes, row sep=2em,
column sep=2em, minimum width=2em]
{  Y(n) & E(\pi_n(Y),n)  \\
  Y(n-1) & K(\pi_n(Y),n+1) \\
};
  \begin{scope}[every node/.style={scale=.8}]
\path[->](m-1-1) edge  (m-1-2);
\path[->](m-1-1) edge node[left] {$p(n)$} (m-2-1);
\path[->](m-2-1) edge node[below] {$k(n-1)$} (m-2-2);
\path[->](m-1-2) edge node[right] {$\delta$} (m-2-2);
\end{scope}
\end{tikzpicture}
\end{center}
\end{prop}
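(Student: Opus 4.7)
The plan is to proceed in three steps corresponding to the three claims: that $\{Y(n)\}$ is a Postnikov tower, that the $p(n)$ are fibrations, and that in the minimal case the $p(n)$ are minimal fibrations realised as pullbacks of Eilenberg--MacLane fibrations. First I would check that $\sim_n$ is a well-defined simplicial congruence on $Y$: if $x,y\colon\Delta^q\to Y$ agree on $\operatorname{sk}_n\Delta^q$, then for any face or degeneracy operator the induced simplicial map $\Delta^p\to\Delta^q$ sends $\operatorname{sk}_n\Delta^p$ into $\operatorname{sk}_n\Delta^q$, so the postcompositions with $x$ and $y$ still agree on $\operatorname{sk}_n\Delta^p$. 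This yields the simplicial quotient $Y(n)=Y/\sim_n$, the canonical quotient $\varphi(n)\colon Y\to Y(n)$, and the projections $p(n)\colon Y(n)\to Y(n-1)$ induced by the evident refinement $\sim_n\subset\sim_{n-1}$.

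Next I would verify the Postnikov axioms together with the fibration property. For $\pi_k(Y(n))=0$ when $k\geq n+1$, a representative of a homotopy class is a $k$-simplex $x$ of $Y$ whose boundary is degenerate at the basepoint; since $k\geq n+1$, the class $[x]$ in $Y(n)$ is determined by its $n$-skeleton, which is degenerate, so $[x]$ already equals the basepoint class. For the isomorphism $\varphi(n)_*\colon\pi_k(Y)\xrightarrow{\cong}\pi_k(Y(n))$ when $k\leq n$, surjectivity uses that a $k$-simplex of $Y(n)$ has a canonical representative $k$-simplex in $Y$ (its $n$-skeleton is the whole simplex), and injectivity follows by lifting a homotopy $(k+1)$-simplex from $Y(n)$ back to $Y$ via the Kan condition on $Y$ and the fact that $(k+1)\leq n+1$ makes the ambiguity controllable on the $n$-skeleton. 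The Kan fibration property of $p(n)$ is then obtained by filling a horn first in $Y$ using its Kan structure and projecting, the resulting extension being well-defined modulo $\sim_n$ because any two fillings in $Y$ differ only above the $n$-skeleton of the prescribed lift in $Y(n-1)$.

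For the minimal case I would first show that if $Y$ is minimal then so is each $Y(n)$: two $q$-simplices of $Y(n)$ homotopic rel $\partial$ lift to $q$-simplices of $Y$ that are homotopic rel $\partial$ there, hence equal by minimality of $Y$, and therefore equal in $Y(n)$. Consequently each $p(n)$ is a fibration between minimal Kan complexes, hence automatically a minimal fibration. To get the pullback description I would analyse the fiber of $p(n)$ using the long exact sequence: all homotopy groups vanish except in dimension $n$, where one reads off $\pi_n(Y)$, and minimality forces the fiber to be isomorphic to the standard minimal model $K(\pi_n(Y),n)$. The fibration $p(n)$ is then principal, and one invokes the classification of principal minimal $K(\pi,n)$-fibrations to produce a strict simplicial map $k(n-1)\colon Y(n-1)\to K(\pi_n(Y),n+1)$ with $Y(n)$ the strict pullback of the universal fibration $\delta\colon E(\pi_n(Y),n)\to K(\pi_n(Y),n+1)$.

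The main obstacle I expect is the injectivity of $\varphi(n)_*$ at the borderline case $k=n$, where a candidate homotopy is an $(n+1)$-simplex whose $n$-skeleton already carries all the identifying data, so one must combine the Kan extension property of $Y$ with a careful check that a homotopy rel $\partial$ in $Y(n)$ lifts to one in $Y$. The second subtle point is producing the $k$-invariant as an honest simplicial map rather than merely a homotopy class, so that the square becomes a \emph{strict} pullback; this needs the full strength of minimality to pick out canonical cocycle representatives in a natural way.
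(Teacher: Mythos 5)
Your outline is correct and reproduces the standard arguments. The paper does not actually prove this proposition---it cites Moore \cite{MO} (Theorems 2.6 and 3.26) and Goerss--Jardine \cite{GJ} (Chapter VI, Theorem 2.5 and Corollary 5.13)---and your sketch is a faithful reconstruction of exactly those proofs, so the only difference is that the genuinely technical steps (the horn-lifting argument showing $p(n)$ is a Kan fibration, and the classification of minimal fibrations with fibre $K(\pi,n)$ as strict pullbacks of $\delta$) are delegated to the references by the paper and only sketched by you.
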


\begin{proof}
These are Theorems 2.6 and 3.26 in \cite{MO}. See also \cite{GJ}, Chapter VI, Theorem 2.5, and Corollary 5.13.
\end{proof}

The main advantage of the Moore-Postnikov tower is that it forms a functor from the category of Kan complexes into a category of towers of simplicial sets. However, we would like to construct Postnikov towers directly from simplicial sets which are not Kan complexes. A common procedure for doing this is to build up the $n$-th stage from the $(n-1)$-th stage as a pullback of the minimal Eilenberg-MacLane fibration $\delta\colon E(\pi_n(Y),n)\to  K(\pi_n(Y),n+1)$.

\begin{defn}\label{3d1}
Let $Y$ be a simply connected simplicial set. A \emph{standard Postnikov tower} is a Postnikov tower such that for all $n\geq1$: $Y_n$ is the pullback of the fibration $\delta$ along a map $k_{n-1}\colon Y_{n-1}\to K(\pi_n(Y),n+1)$.
\begin{center}
\begin{tikzpicture}
\matrix (m) [matrix of math nodes, row sep=2em,
column sep=2em, minimum width=2em]
{  Y_n & E(\pi_n(Y),n)  \\
  Y_{n-1} & K(\pi_n(Y),n+1) \\
};
  \begin{scope}[every node/.style={scale=.8}]
\path[->](m-1-1) edge node[above]{$r_n$}  (m-1-2);
\path[->](m-1-1) edge node[left] {$p_n$} (m-2-1);
\path[->](m-2-1) edge node[below] {$k_{n-1}$} (m-2-2);
\path[->](m-1-2) edge node[right] {$\delta$} (m-2-2);
\end{scope}
\end{tikzpicture}
\end{center}
The map $k_{n-1}$ is called a \emph{Postnikov map}. Since the Kan fibrations $\delta$ are minimal, their pullbacks $p_n$ are also minimal and all stages $Y_n$ are minimal Kan complexes.
\end{defn}

In the subsequent proposition, we compare general Postnikov towers and standard Postnikov towers with the Moore-Postnikov ones.

A morphism of Postnikow towers $f_*\colon X_*\to Y_*$ is called a \emph{weak equivalence of Postnikov towers} if all $f_n\colon X_n\to Y_n$ are \emph{simplicial weak homotopy equivalences}, i.e., they induce isomorphisms on all homotopy groups after passage to
topological spaces. Similarly, $f_*$ is an isomorphism of Postnikov towers if all $f_n$ are isomorphisms of simplicial sets.

\begin{prop}\label{TGJ1}
Let $Y$ be a simply connected minimal Kan complex. Then for any Postnikov tower $\{Y_n,p_n,\varphi_n\}$ for $Y$ there is a weak equivalence $h_*$ to the Moore-Postnikov tower $\{Y(n),p(n),\varphi(n)\}$ for $Y$ and an isomorphism $h\colon Y\to\varprojlim Y(k)$ such that the diagrams 
\begin{equation}
\begin{tikzpicture}
\matrix (m) [matrix of math nodes, row sep=2em,
column sep=2em, minimum width=2em]
{   Y & \varprojlim Y(k)=Y  \\
  Y_n & Y(n) \\
};
  \begin{scope}[every node/.style={scale=.8}]
\path[->](m-1-1) edge node[above] {$h$} (m-1-2);
\path[->](m-1-1) edge node[left] {$\varphi_n$} (m-2-1);
\path[->](m-2-1) edge node[below] {$h_n$} (m-2-2);
\path[->](m-1-2) edge node[right] {$\varphi(n)$} (m-2-2);
\end{scope}
\end{tikzpicture}\label{3E1}
\end{equation}
commute. 
If the maps $p_n$ are minimal fibrations, then both towers are isomorphic.
\end{prop}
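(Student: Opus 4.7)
The plan is to construct the tower map $h_\ast=\{h_n\}$ inductively and then exploit minimality---first of $Y$, then (under the extra hypothesis) of each fibration $p_n$---to promote the weak equivalences to the claimed isomorphisms. For the base case $n=0$ the hypothesis that $Y$ is connected forces $Y(0)=\ast$, so $h_0\colon Y_0\to\ast$ is the unique map and is automatically a weak equivalence. At the inductive step I assume $h_{n-1}\colon Y_{n-1}\to Y(n-1)$ is a weak equivalence making the preceding square commute. The identity
\[
p(n)\circ\varphi(n)=\varphi(n-1)=h_{n-1}\circ\varphi_{n-1}=h_{n-1}\circ p_n\circ\varphi_n
\]
says precisely that $(\varphi_n,\,h_{n-1}p_n)$ together with $\varphi(n)$ forms a commutative square over the Kan fibration $p(n)$; the task is to find $h_n\colon Y_n\to Y(n)$ extending $\varphi(n)$ along $\varphi_n$ and covering $h_{n-1}p_n$.

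Producing such an $h_n$ is the technical heart and I expect it to be the main obstacle: the model-category lifting axioms give only a homotopy commutative diagram unless $\varphi_n$ happens to be a cofibration, which it need not be. The remedy is an obstruction-theoretic cell-by-cell extension. Because the fibre of $p(n)$ is an Eilenberg--MacLane space of type $K(\pi_n(Y),n)$ and $\varphi_n$ is an $(n+1)$-equivalence, the obstructions to a relative lift live in cohomology groups of the pair $(Y_n,\varphi_n(Y))$ with coefficients in $\pi_n(Y)$, which vanish in the relevant range; filling horns simplex by simplex then produces a genuine simplicial map $h_n$ making the whole diagram strictly commute. Equivalently, one can appeal to the functoriality of the Moore--Postnikov construction applied to $\varphi_n$ to construct $h_n$ canonically, as in \cite{MO}. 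Once $h_n$ is in hand, the five lemma applied to the long exact homotopy sequences of $p_n$ and $p(n)$, together with the inductive hypothesis on $h_{n-1}$ and the fact that $\varphi_n$ and $\varphi(n)$ induce isomorphisms on $\pi_k$ for $k\leq n$, shows $h_n$ is a weak equivalence.

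Since $Y$ is a simply connected minimal Kan complex, Proposition~\ref{P} gives the canonical isomorphism $h\colon Y\xrightarrow{\cong}\varprojlim_k Y(k)$ coming from convergence of the Moore--Postnikov tower, and commutativity of the relevant square in the statement is then immediate from the identity $h_n\varphi_n=\varphi(n)$ after projecting $\varprojlim Y(k)$ onto its $n$-th factor. For the last assertion, if every $p_n$ is a minimal Kan fibration then each $Y_n$ is inductively a minimal Kan complex (being a pullback of a minimal fibration over minimal data), and $h_n$---being a weak equivalence between minimal Kan complexes---is necessarily an isomorphism of simplicial sets. This promotes $h_\ast$ to an isomorphism of Postnikov towers, and passing to the inverse limit shows $h$ itself is an isomorphism.
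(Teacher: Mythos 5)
Your overall strategy---construct $h_*$ stage by stage by lifting against the fibrations $p(n)$, then use minimality to upgrade weak equivalences to isomorphisms---is the natural one, and your treatment of the final assertion coincides with the paper's: a weak equivalence between minimal Kan complexes is an isomorphism (Theorem 2.20 of \cite{C}). The paper, however, does not construct $h_*$ at all; it obtains the weak equivalence of towers by quoting Theorem 5.14 of \cite{GJ} and only argues the minimality upgrade. So the inductive construction is precisely the part you are supplying from scratch, and it is precisely there that your argument has a gap, at the step you yourself flag as the main obstacle.

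You propose to produce $h_n$ by relative obstruction theory over the pair $(Y_n,\varphi_n(Y))$. This presupposes that $\varphi(n)\colon Y\to Y(n)$ descends to a well-defined map on the image $\varphi_n(Y)\subseteq Y_n$, which fails whenever $\varphi_n$ identifies two simplices of $Y$ having distinct images under $\varphi(n)$; nothing in the hypotheses makes $\varphi_n$ injective, as you note. The standard repair---factor $\varphi_n$ through its mapping cylinder, solve the lifting problem there (the obstruction group $H^{n+1}(\operatorname{cyl}(\varphi_n),Y;\pi_n(Y))$ does vanish, since the pair is $(n+1)$-connected and the fibre of $p(n)$ is a $K(\pi_n(Y),n)$), and restrict to the far end of the cylinder---yields an $h_n$ with $p(n)h_n=h_{n-1}p_n$ strictly but only $h_n\varphi_n\simeq\varphi(n)$ up to homotopy. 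Your assertion that ``filling horns simplex by simplex'' gives strict commutativity of the upper square is therefore unsupported, and strict commutativity is what the proposition claims and what Theorem \ref{nec} later uses. The alternative you offer, applying the Moore--Postnikov functor to $\varphi_n$, also does not go through as stated, because the stages $Y_n$ of an arbitrary Postnikov tower are not assumed to be Kan complexes. A smaller point: minimality of each $Y_n$ in the last paragraph does not follow from minimality of the $p_n$ alone; you also need $Y_0$ to be minimal (i.e.\ a point), since a minimal fibration over a non-minimal base has non-minimal total space; the relevant fact is that the composite $Y_n\to Y_0$ is then a minimal fibration over a point.
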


\begin{proof}
The first part is a particular case of Theorem 5.14 in \cite{GJ}. So we have a weak equivalence $h_*$ of the Postnikov towers.
This collection of maps induces a weak equivalence $h\colon Y\to\varprojlim Y(k)=Y$ such that the diagram commutes. 

For a simply connected minimal Kan complex $Y$, the maps  $p(n)$ are minimal Kan fibrations.
If $p_n$ are also minimal fibrations then $h_n\colon Y_n\to Y(n)$ and $h\colon Y\to Y$
are weak equivalences between minimal Kan complexes, and according to Theorem 2.20 in \cite{C} they are isomorphisms.
\end{proof}

\algline
\medskip

\subsection*{Algorithmic construction of Postnikov tower}
The algorithmic approach constructs a standard Postnikov tower by specifying instructions for maps $k_{n-1}\colon Y_{n-1}\to K(\pi_n(Y),n+1)$ and $\varphi_n\colon Y\to Y_n$. Here we summarize the algorithm from \cite[Section 4]{poly} emphasizing
some facts needed for our purposes.

Consider a simplicial set $Y$ which is equipped with  effective homology $C_*(Y
)\Longleftrightarrow \EC_*(Y)$ and suppose that we have constructed its Postnikov tower up to the stage $n-1$. This means that we have $Y_{n-1}$ with  effective homology $C_*(Y_{n-1})\Longleftrightarrow \EC_*(Y_{n-1})$ and that a simplicial map $\varphi_{n-1}: Y\to Y_{n-1}$  has been computed. 
The Eilenberg-Zilber theorem implies the existence of a strong equivalence $M_*:=\operatorname{cone}_*(\varphi_{n-1})_*\Longleftrightarrow C_*(\operatorname{cyl}(\varphi_{n-1}),Y)$. Here $\operatorname{cone}_*(\varphi_{n-1})_*$ is the cone of the chain homomorphism ${\varphi_{n-1}}_*$ and $\operatorname{cyl}(\varphi_{n-1})$ is the cylinder of the simplicial map $\varphi_{n-1}$. 

Importantly, some homotopy groups of $Y$ can be identified with certain homology groups of $M_*$.
This relationship can  be derived from the composition of the Eilenberg-Zilber map $EZ$, the Hurewicz isomorphism $h$ and the connecting isomorphism $c$ of the $n$-connected pair $(\operatorname{cyl}(\varphi_{n-1}),Y)$. For $i\leq n$ we get
\begin{equation}
H_{i+1}(M_*)\xrightarrow{EZ_{i+1}} H_{i+1}(\operatorname{cyl}(\varphi_{n-1}),Y)\xrightarrow{h_{i+1}^{-1}} \pi_{i+1}(\operatorname{cyl}(\varphi_{n-1}),Y)\xrightarrow{c_{i+1}}\pi_i(Y).\label{3e4}
\end{equation}
The chain complex $M_*=\operatorname{cone}(\varphi_{n-1})_*$ 
has effective homology (see Proposition \ref{ConeEf}) since
\begin{equation}
M_*=\operatorname{cone}(\varphi_{n-1})_*=C_{*-1}(Y)\oplus C_*(Y_{n-1})\Longleftrightarrow\EC_{*-1}(Y)\oplus \EC_*(Y_{n-1})
=:\EM_*.\label{1e2}
\end{equation}
For algorithm purposes, we define in accordance with \eqref{3e4}
\begin{equation*}
\pi_n^Y:=H_{n+1}(\EM_*)\cong H_{n+1}(M_*).\label{3e3}
\end{equation*}

Now we introduce a notation. Denote by $\iota\colon C_*(Y_{n-1})\hookrightarrow M_*$ and
$\eiota\colon \EC_*(Y_{n-1})\hookrightarrow \EM_*$ the inclusions from \eqref{1e2}. The strong equivalence
$C_*(Y_{n-1})\Longleftrightarrow \EC_*(Y_{n-1})$ is a part of the strong equivalence $M_*\Longleftrightarrow  \EM_*$.
Denote by $\overline\xi_*\colon M_*\to \EM_*$ and $\xi_*\colon C_*(Y_{n-1})\to \EC_*(Y_{n-1})$  homomorphisms corresponding
to these strong equivalences. They make the following diagram commutative:
\begin{equation}
\begin{tikzpicture}
\matrix (m) [matrix of math nodes, row sep=2em,
column sep=2em, minimum width=2em]
{ \EC_*({Y_{n-1}})  & \EM_*  \\
  C_*(Y_{n-1}) & M_* \\
};
  \begin{scope}[every node/.style={scale=.8}]
\path[<-](m-1-2) edge node[right] {$\overline{\xi}_*$} (m-2-2);
\path[->](m-1-1) edge node[above] {$\iota^{\text{ef}}$} (m-1-2);
\path[<-](m-1-1) edge node[left] {$\xi_*$} (m-2-1);
\path[->](m-2-1) edge node[above] {$\iota$} (m-2-2);
\end{scope}
\end{tikzpicture}\label{3e5}
\end{equation}

In \cite{poly} it was shown that the projection of cycles $Z_{n+1}(\EM_*)$ onto $H_{n+1}(\EM_*)$ can be algorithmically extended to chains, so we have a homomorphism $\erho\colon \EM_{n+1}\to H_{n+1}(\EM_*)=\pi_n^Y$. Moreover, we can define an analogous homomorphism $\rho\colon M_{n+1}\to H_{n+1}(M_*)$ such that the diagram
\begin{equation}
\begin{tikzpicture}
\matrix (m) [matrix of math nodes, row sep=2em,
column sep=2em, minimum width=2em]
{ \EM_{n+1}  & H_{n+1}(\EM_*)  \\
  M_{n+1} & H_{n+1}(M_*) \\
};
  \begin{scope}[every node/.style={scale=.8}]
\path[<-](m-1-2) edge node[right] {$\overline{\xi}_*$} (m-2-2);
\path[->](m-1-1) edge node[above] {$\erho$} (m-1-2);
\path[<-](m-1-1) edge node[left] {$\overline{\xi}_*$} (m-2-1);
\path[->](m-2-1) edge node[above] {$\rho$} (m-2-2);
\end{scope}
\end{tikzpicture}\label{3e6}
\end{equation}
commutes. 

Now we can define the cocycle
$$\ekappa_{n-1}:=\erho\circ\eiota\colon \EC_*(Y_{n-1})\to \pi_n^Y.$$ 
We will call it an \emph{effective Postnikov cocycle}. Its cohomology class  $[\ekappa_{n-1}]\in H^{n+1}\left(\EC_*(Y_{n-1});\pi_n^Y\right)$ will be called an \emph{effective Postnikov class}. 
Next, we define the \emph{Postnikov cocycle} $\kappa_{n-1}$ as  
\[\kappa_{n-1}:=\ekappa_{n-1}\circ\xi_*=
\xi^*(\ekappa_{n-1})\in Z^{n+1}(C_{*}(Y_{n-1}),\pi_n^Y)\]
and the \emph{Postnikov class} as its cohomology class $[\kappa_{n-1}]\in H^{n+1}(C_*(Y_{n-1});\pi_n^Y)$.
The previous definitions  can now be summarized in the commutative diagram:
\begin{equation}
\begin{tikzpicture}
\matrix (m) [matrix of math nodes, row sep=2em,
column sep=2em, minimum width=2em]
{ \EC_{n+1}({Y_{n-1}})  & \EM_{n+1} &[1.5cm]  H_{n+1}(\EM_*) & \pi_n^Y \\
  C_{n+1}(Y_{n-1}) & M_{n+1} & H_{n+1}(M_*) \\
};
  \begin{scope}[every node/.style={scale=.8}]
\path[<-](m-1-2) edge node[right] {$\overline{\xi}_{n+1}$} (m-2-2);
\path[<-](m-1-3) edge node[right] {$\overline{\xi}_*$} (m-2-3);
\path[->](m-1-1) edge node[above] {$\eiota$} (m-1-2);
\path[<-](m-1-1) edge node[left] {$\xi_{n+1}$} (m-2-1);
\path[->](m-2-1) edge node[above] {$\iota$} (m-2-2);
\path[->](m-1-2) edge node[above] {$\erho$} (m-1-3);
\path[->](m-2-2) edge node[above] {$\rho$} (m-2-3);
\path[->](m-1-3) edge node[above] {$\id$} (m-1-4);
\path[->](m-2-1) edge[bend right=50] node[above] {$\kappa_{n-1}$} (m-1-4);
\path[->](m-1-1) edge[bend left] node[below] {$\ekappa_{n-1}$} (m-1-4);
\end{scope}
\end{tikzpicture}\label{3e9}
\end{equation}

For every abelian group $\pi$, the evaluation $\operatorname{ev}\colon \operatorname{SMap}(Y,K(\pi,k))\to Z^{k}(Y,\pi)$ 
from simplicial maps into $k$-cocycles on $Y$ is the map such that for a simplicial map 
$f\colon Y\to K(\pi,k)$ the cocycle  $\ev(f)$ assigns to every $k$-simplex $\sigma\in Y_k$ the value of the cocycle $f(\sigma)\in K(\pi,k)_k=Z^k(\Delta^k,\pi)$ on the unique nondegenerate $k$-simplex of $\Delta^k$. We will write $\ev(f)(\sigma)$ for this value in $\pi$. The evaluation is a bijection and  enables us to define the simplicial map
$k_{n-1}$
\begin{equation*}
k_{n-1}=\operatorname{ev}^{-1}(\kappa_{n-1})\colon Y_{n-1}\to K(\pi_n^Y,n+1).\label{3e2}
\end{equation*}
corresponding to the Postnikov cocycle $\kappa_{n-1}$. This will be the Postnikov map for our construction.
Using it, we can construct the $n$-th Postnikov stage $Y_n$ as the pullback along this map (see the definition of the standard Postnikov tower). This pullback has effective homology. To complete the construction of the $n$-th stage we have to find a simplicial map $\varphi_n\colon Y\to Y_{n}$ which fits into  diagram \eqref{3d2} of the Postnikov tower. Since $Y_n$ is the pullback, the required map is given by the couple of maps $\varphi_{n-1}:Y\to Y_{n-1}$ and $l_n:Y\to E(\pi_n^Y,n)$.
The map $l_n$ corresponds via bijective evaluation $\operatorname{SMap}(Y, E(\pi_n^Y,n))\to
C^n(Y;\pi_n^Y)$ to the cochain $\lambda_n$ given by the composition
\begin{equation*}
C_n(Y)\hookrightarrow M_{n+1}\xrightarrow{\overline\xi} \EM_{n+1}\xrightarrow{\E{\rho}}\pi_n^Y.
\end{equation*}
In \cite{poly} it is proved that $k_{n-1}$, $Y_n$ and $\varphi_n$ satisfy all properties in the definition  of the standard Postnikov tower.

\begin{defn} The Postnikov tower constructed above will be called an \emph{effective Postnikov tower} of the simplicial set $Y$. 
\end{defn}

At the end of this section, we recall the notion of fundamental class and transgression and their relationship with  Postnikov classes. These relations play a crucial role in the proof of Theorem \ref{nec}.
Let $\iota_k:K(\pi,k)_k\to \pi$ be the mapping assigning to each $\pi$-valued cocycle $\sigma\in Z^k(\Delta^k,\pi)$ its value on
the unique $k$-simplex of $\Delta^k$, i.e. $\iota_k=\ev(\id_{K(\pi,k)})$. The cohomology class $[\iota_k]\in H^k(K(\pi,k); \pi)$ is called the fundamental class. The next lemma provides the relations between fundamental classes and the Postnikov class $[\kappa_{n-1}]$ constructed above.

\begin{lem}\label{trans}
Denote by $\tau^K$ and $\tau$ the transgressions in the Serre long exact sequences of fibrations $K(\pi_n^Y,n)\to E(\pi_n^Y,n)\to 
K(\pi_n^Y,n+1)$ and $K(\pi_n^Y,n)\to Y_n\to Y_{n-1}$, respectively. Then
$$\tau^K([\iota_n])=[\iota_{n+1}] \quad \text{and}\quad \tau([\iota_n])=k_{n-1}^*([\iota_{n+1}])=[\kappa_{n-1}].$$
\end{lem}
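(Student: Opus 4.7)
The statement decomposes into three equalities: (a) $\tau^K([\iota_n])=[\iota_{n+1}]$ for the Eilenberg--MacLane fibration, (b) $\tau([\iota_n])=k_{n-1}^*([\iota_{n+1}])$ by naturality of the transgression under the pullback square in Definition \ref{3d1}, and (c) $k_{n-1}^*([\iota_{n+1}])=[\kappa_{n-1}]$ by the construction of $k_{n-1}$ via $\ev^{-1}$. I would prove them in this order, which proceeds from the most classical to the most computational.

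For (a), the plan is to use the fact that $E(\pi_n^Y,n)$ is (simplicially) contractible: its $k$-simplices are all cochains $C^n(\Delta^k,\pi_n^Y)$, and an explicit contracting homotopy is provided by the usual cone construction on the standard simplices. In the Serre spectral sequence for $K(\pi_n^Y,n)\to E(\pi_n^Y,n)\to K(\pi_n^Y,n+1)$, the cohomology of the total space vanishes in positive degrees, so the fundamental class $[\iota_n]\in H^n(K(\pi_n^Y,n);\pi_n^Y)$ must be hit by a differential. The only possibility in the relevant range is the transgression $\tau^K$, whose image lands in $H^{n+1}(K(\pi_n^Y,n+1);\pi_n^Y)$. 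Because this target is an infinite cyclic module over $\mathrm{End}(\pi_n^Y)$ generated by $[\iota_{n+1}]$, and because one can check on the standard $n$-simplex that a representative cochain on $E(\pi_n^Y,n)$ restricting to $\iota_n$ on the fibre has coboundary given by the evaluation cocycle defining $\iota_{n+1}$, the transgression sends $[\iota_n]$ exactly to $[\iota_{n+1}]$.

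For (b), the pullback square defining $Y_n$ gives a map of fibrations covering $k_{n-1}\colon Y_{n-1}\to K(\pi_n^Y,n+1)$ and inducing the identity on fibres $K(\pi_n^Y,n)$. Consequently the induced morphism of Serre spectral sequences commutes with the transgressions, i.e.\ $\tau\circ\mathrm{id}^*=k_{n-1}^*\circ\tau^K$ on $H^n(K(\pi_n^Y,n);\pi_n^Y)$. Evaluating at $[\iota_n]$ and using (a) yields $\tau([\iota_n])=k_{n-1}^*([\iota_{n+1}])$. Finally, for (c), I would unwind the bijection $\ev\colon \mathrm{SMap}(Y_{n-1},K(\pi_n^Y,n+1))\to Z^{n+1}(Y_{n-1};\pi_n^Y)$: for any simplicial map $f$ and any $(n+1)$-simplex $\sigma\in Y_{n-1}$, the pulled-back cocycle $f^*(\iota_{n+1})$ evaluates to $\iota_{n+1}(f(\sigma))=\ev(f)(\sigma)$, hence $f^*[\iota_{n+1}]=[\ev(f)]$; applying this to $f=k_{n-1}=\ev^{-1}(\kappa_{n-1})$ gives $k_{n-1}^*([\iota_{n+1}])=[\kappa_{n-1}]$. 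The main obstacle is the first equality (a), since the naturality argument in (b) and the direct unwinding in (c) are essentially formal; the subtlety in (a) is producing an explicit cochain witness on $E(\pi_n^Y,n)$ that makes the transgression computation match the fundamental class on the nose rather than only up to a sign or an automorphism of $\pi_n^Y$.
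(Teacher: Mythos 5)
Your proposal is correct and follows the same overall decomposition as the paper: (b) is exactly the paper's argument (naturality of the transgression for the pullback square of fibrations), and (c) is exactly the paper's final chain of equalities $k_{n-1}^*([\iota_{n+1}])=[\ev(\id\circ k_{n-1})]=[\ev(k_{n-1})]=[\kappa_{n-1}]$. The only real divergence is in (a). The paper does not invoke the Serre spectral sequence or contractibility of $E(\pi_n^Y,n)$; instead it dualizes, using $H^{k}(K(\pi_n^Y,k);\pi_n^Y)\cong\Hom(H_k(K(\pi_n^Y,k)),\pi_n^Y)$ to reduce $\tau^K$ to a homological transgression $\tau_K\colon H_{n+1}(K(\pi_n^Y,n+1))\to H_n(K(\pi_n^Y,n))$, which it computes on the chain level by writing down an explicit section $\varphi$ of $(\delta\circ\pr)_*$: the simplex $\sigma\in K(\pi_n^Y,n+1)_{n+1}$ is lifted to the cochain $\varphi(\sigma)\in C^n(\Delta^{n+1},\pi_n^Y)$ supported on the face $d^0$, and $\partial\varphi(\sigma)$ is then checked to carry the same evaluation as $\sigma$. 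Your spectral-sequence framing is a legitimate alternative, and you correctly identify the crux: the abstract argument (contractible total space plus the $\mathrm{End}(\pi_n^Y)$-module structure of $H^{n+1}(K(\pi_n^Y,n+1);\pi_n^Y)$, which, note, is cyclic but not \emph{infinite} cyclic when $\pi_n^Y$ is finite) only pins down $\tau^K([\iota_n])$ up to an automorphism of $\pi_n^Y$, so an explicit cochain witness is unavoidable. You assert that witness rather than construct it; to make your (a) complete you would need to exhibit it, e.g.\ the cochain on $E(\pi_n^Y,n)$ sending $c\in C^n(\Delta^n,\pi_n^Y)$ to its value on the top simplex, whose coboundary on an $(n+1)$-simplex $c$ is $(\delta c)(\Delta^{n+1})$, i.e.\ the pullback under $\delta$ of the evaluation cocycle $\iota_{n+1}$ --- the cohomological mirror of the paper's $\varphi$. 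With that detail supplied, the two proofs of (a) carry the same content.
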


\begin{proof}
Using the fact that $H^k(K(\pi^Y_n,k);\pi_n^k)\cong \operatorname{Hom}(H_k(K(\pi^Y_n,k),\pi^Y_n)$ the transgression $\tau^K$ is dual to a homomorphism $\tau_K\colon H_{n+1}(K(\pi_n^Y,n+1))\to H_{n}(K(\pi_n^Y,n))$ which  can be represented by a homomorphism $\tau_K$ on the level of chain complexes: 
\begin{equation*}
\begin{tikzpicture}
\matrix (m) [matrix of math nodes, row sep=2em,
column sep=2em, minimum width=2em]
{ {} & C_{n+1}(E(\pi_n^Y,n))   & \frac{C_{n+1}(E(\pi_n^Y,n))}{C_{n+1}(K(\pi_n^Y,n)) }  & C_{n+1}(K(\pi_n^Y,n+1))  \\
  C_{n}(K(\pi_n^Y,n)) & C_{n}(E(\pi_n^Y,n)) & {} & {}\\
};
  \begin{scope}[every node/.style={scale=.8}]
\path[->>](m-1-3) edge node[above] {$\delta_*$} (m-1-4);
\path[->>](m-1-2) edge node[above] {$\operatorname{pr}_*$} (m-1-3);
\path[>->](m-2-1) edge   (m-2-2);
\path[->](m-1-2) edge node[right] {$\partial$} (m-2-2);
\path[->](m-1-4) edge[bend left] node[below] {$\tau_K$} (m-2-1);
\path[->](m-1-4) edge[bend right] node[above] {$\varphi$} (m-1-2);
\end{scope}
\end{tikzpicture}
\end{equation*}
To get a precise definition of $\tau_K$, one needs to specify the right inverse $\varphi$ of $(\delta\operatorname{pr})_*$.
Take a simplex $\sigma$ of $K(\pi_n^Y,n+1)_{n+1}$ uniquely determined by its value $\ev(\id_{K(\pi_n^Y,n+1)})(\sigma)\in\pi_n^Y$. Define $\varphi(\sigma)$ as the cochain 
in $C^n(\Delta^{n+1},\pi_n^Y)$ (i.e. an element of $C_{n+1}(E(\pi_n^Y,n)$) satisfying $\varphi(\sigma)(d^0)=\ev(\id_{K(\pi_n^Y,n+1)})(\sigma)$ and $\varphi(\sigma)(d^i)=0$ for $i\ge 1$ where $d^i\colon \Delta^{n}\to\Delta^{n+1}$ are the standard faces in $\Delta^{n+1}$.  
Then $\partial(\varphi(\sigma))$ is a simplex in $K(\pi_n^Y,n)_n$
with the property that
$$\ev(\id_{K(\pi_n^Y,n)})(\partial(\varphi(\sigma)))=\ev(\id_{K(\pi_n^Y,n+1)})(\sigma).$$
Hence $\tau^K([\iota_n])=[\iota_{n+1}]$.

To prove the latter statement, consider the pullback diagram of fibrations: 
\begin{equation*}
\begin{tikzpicture}
\matrix (m) [matrix of math nodes, row sep=2em,
column sep=2em, minimum width=2em]
{ K(\pi_n^Y,n)  & K(\pi_n^Y,n)   \\
  Y_n & E(\pi_n^Y,n) \\
  Y_{n-1} & K(\pi_n^Y,n+1) \\
};
  \begin{scope}[every node/.style={scale=.8}]
\path[-](m-1-1) edge node[above] {$\id$} (m-1-2);
\path[->](m-1-1) edge  (m-2-1);
\path[->](m-2-1) edge  (m-2-2);
\path[->](m-1-2) edge  (m-2-2);
\path[->](m-2-1) edge  (m-3-1);
\path[->](m-2-2) edge node[right] {$\delta$} (m-3-2);
\path[->](m-3-1) edge node[above] {$k_{n-1}$} (m-3-2);
\end{scope}
\end{tikzpicture}
\end{equation*}
Since the transgression is natural,  we get
\begin{align*}\tau([\iota_n])&=k_{n-1}^*(\tau^K([\iota_n]))=k_{n-1}^*([\iota_{n+1}])\\
&=k_{n-1}^*([\ev(\id_{K(\pi_n^Y,n+1)})])=
[\ev(\id_{K(\pi_n^Y,n+1)}\circ k_{n-1})]\\
&=[\ev(k_{n-1})]=[\kappa_{n-1}].
\end{align*}

\end{proof}

\section{Necessary and sufficient conditions}

This section aims to establish necessary and sufficient conditions in terms of their effective Postnikov towers
to decide if the geometric realizations of two finite simply connected simplicial sets $X$ and $Y$ are homotopy equivalent. First, we move from the geometric realizations of simplicial sets to their substitutions as minimal Kan complexes.

\begin{lem}\label{BL}
Let $X$ and $Y$ be simply connected simplicial sets. Let $\{X_{n}, p_n^X,\varphi_n^X\}$ and $\{Y_{n},p_n^Y,\varphi_n^Y \}$ be their standard Postnikov towers. Put $X':=\varprojlim X_n$ and $Y':=\varprojlim Y_n$.  Then there is an isomorphism 
$$[|X|,|Y|]\cong [X',Y'].$$

In particular, $|X|$ and $|Y|$ are homotopy equivalent if and only if there is a simplicial weak homotopy equivalence $f\colon X'\to Y'$.
\end{lem}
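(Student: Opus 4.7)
The plan is to move everything to minimal Kan complexes. I would first verify that $X':=\varprojlim X_n$ and $Y':=\varprojlim Y_n$ are minimal Kan complexes. By Definition \ref{3d1} each $X_n$ (resp.\ $Y_n$) is a minimal Kan complex and each $p_n^X$ (resp.\ $p_n^Y$) is a minimal Kan fibration, so the inverse limit of the tower is again a minimal Kan complex. The compatible family $\{\varphi_n^X\}$ assembles into a simplicial map $\varphi^X\colon X\to X'$, and similarly $\varphi^Y\colon Y\to Y'$. Because $\pi_k(X_n)$ stabilizes at $\pi_k(X)$ as soon as $n\geq k$ (so no $\varprojlim^1$ term appears), $\varphi^X$ and $\varphi^Y$ induce isomorphisms on all homotopy groups, i.e.\ they are simplicial weak equivalences.

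Next I would apply geometric realization. The maps $|\varphi^X|$ and $|\varphi^Y|$ are weak equivalences between CW complexes and hence honest homotopy equivalences by Whitehead's theorem. Pre- and post-composition with these homotopy equivalences therefore yields a bijection
$$[|X|,|Y|]\;\cong\;[|X'|,|Y'|].$$

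To identify the right-hand side with $[X',Y']$, I would invoke the classical fact (see \cite{GJ}, Chapter~I) that for any simplicial set $Z$ and any Kan complex $W$ the geometric realization functor induces a natural bijection $[Z,W]\cong [|Z|,|W|]$ between simplicial and topological homotopy classes. Since $Y'$ is a Kan complex this applies with $Z=X'$, $W=Y'$ and gives $[X',Y']\cong [|X'|,|Y'|]$. Composing the two bijections proves the first assertion.

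For the ``in particular'' part, a homotopy equivalence $|X|\simeq|Y|$ corresponds under the bijection to a class $[f]\in[X',Y']$ whose realization $|f|$ is a homotopy equivalence; then $|f|$ is a weak equivalence and the identification $\pi_k(|X'|)=\pi_k(X')$ shows that $f$ is a simplicial weak equivalence. Conversely, a simplicial weak equivalence $f\colon X'\to Y'$ realizes to a weak equivalence between CW complexes, which is a homotopy equivalence by Whitehead, and combining with $|X|\simeq |X'|$ and $|Y|\simeq |Y'|$ gives $|X|\simeq|Y|$. The main subtlety is the adjunction-theoretic bijection $[X',Y']\cong [|X'|,|Y'|]$: it relies both on $Y'$ being Kan (so simplicial homotopy is an equivalence relation with the expected formal properties) and on the unit $Y'\to S|Y'|$ being a weak equivalence; once that standard result is quoted from \cite{GJ}, everything else is a formal manipulation with weak equivalences.
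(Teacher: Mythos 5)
Your proposal is correct and follows essentially the same route as the paper's proof: the bijection is obtained as the composition of $[|X|,|Y|]\cong[|X'|,|Y'|]$ (induced by the weak equivalences $\varphi^X$, $\varphi^Y$ and Whitehead's theorem) with the canonical identification $[X',Y']\cong[|X'|,|Y'|]$ valid since $Y'$ is a Kan complex. You simply spell out the details (minimality and Kan-ness of the inverse limits, vanishing of $\varprojlim^1$, the realization adjunction) that the paper leaves implicit.
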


\begin{proof} The desired isomorphism is a composition of two bijections. First, using the induced maps
$\varphi^X\colon X\to X'$ and $\varphi^Y\colon Y\to Y'$ one can define the isomorphism
$[|X|,|Y|]\cong [|X'|,|Y'|]$. Next, we use a canonical isomorphism $[A,B]\cong [|A|,|B|]$
for Kan complexes $A$ and $B$ applied to $A=X'$ and $B=Y'$.
\end{proof}

It is natural to seek an inductive criterion that finds weak equivalences between the stages of the Postnikov towers of simplicial sets $X$ and $Y$ successively from lower to higher stages. Once such a criterion is available, one needs to relate it with the existence of homotopy equivalence between the geometric realizations of $X$ and $Y$. The following theorem gives a necessary condition. 
In the category of topological spaces, this condition was derived and used by Kahn in  \cite{kahni} and \cite{kahns}. However, our algorithmic construction of the Postnikov tower of simplicial sets differs from Kahn's, and there is no immediate way to restate his result from topological spaces to simplicial sets. In the simplicial context, the key construction ingredient is a minimal model of Eilenberg-MacLane space. 

\begin{thm}\label{nec}
Let $X$ and $Y$ be simply connected simplicial sets. Let $\{X_{n}, p_n^X,\varphi_n^X\}$ and $\{Y_{n},
p_n^Y,\varphi_n^Y \}$ be their effective Postnikov towers with Postnikov cocycles $\kappa_n^X$, $\kappa_n^Y$,
respectively. Put $X':=\varprojlim X_n$ and  $Y':=\varprojlim Y_n$, and let $\varphi^{X'}_n\colon X'\to X_n$ and $\varphi^{Y'}_n\colon Y'\to Y_n$ be canonical maps. If there is a simplicial map $f\colon X'\to Y'$, then there are maps $f_n\colon X_n\to Y_n$ such that all diagrams
\begin{equation*}
\begin{tikzpicture}
\matrix (m) [matrix of math nodes, row sep=2em,
column sep=2em, minimum width=2em]
{  X_n & Y_n  \\
  X_{n-1} & Y_{n-1} \\};
  \begin{scope}[every node/.style={scale=.8}]
\path[->](m-1-1) edge node[above] {$f_n$} (m-1-2);
\path[->>](m-1-2) edge node[right] {$p_n^Y$} (m-2-2);
\path[->>](m-1-1) edge node[left] {$p_n^X$} (m-2-1);
\path[->](m-2-1) edge node[below] {$f_{n-1}$} (m-2-2);
\end{scope}
\end{tikzpicture}
\hskip 2cm
\begin{tikzpicture}
\matrix (m) [matrix of math nodes, row sep=2em,
column sep=2em, minimum width=2em]
{  X' & Y'  \\
  X_{n} & Y_{n} \\};
  \begin{scope}[every node/.style={scale=.8}]
\path[->](m-1-1) edge node[above] {$f$} (m-1-2);
\path[->](m-1-2) edge node[right] {$\varphi^{Y'}_n$}  (m-2-2);
\path[->](m-1-1) edge node[left] {$\varphi^{X'}_n$} (m-2-1);
\path[->](m-2-1) edge node[below] {$f_{n}$} (m-2-2);
\end{scope}
\end{tikzpicture}
\end{equation*}
commute strictly. Moreover, the  Postnikov classes $[\kappa_{n-1}^X]$ and $[\kappa_{n-1}^Y]$ are in the relation
\begin{equation}
\gamma_*([\kappa_{n-1}^X])=f_{n-1}^*([\kappa_{n-1}^Y])\label{eq}
\end{equation}
where $\gamma_*\colon H^{n+1}(X_{n-1};\pi_n^X)\to H^{n+1}(X_{n-1};\pi_n^Y)$ is induced by a homomorphism
$\gamma\colon \pi_n^X\to \pi_n^Y$ which captures the map $f$ in a suitable manner.

If $f$ is a simplicial weak homotopy equivalence, then all $f_n$ are simplicial weak homotopy equivalences (and even isomorphisms of simplicial sets), and $\gamma$ 
is a group isomorphism
\end{thm}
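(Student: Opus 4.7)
The plan is to build the maps $f_n$ by descending $f$ through Moore's equivalence relations, and then deduce the cohomological equation by combining Lemma \ref{trans} with the naturality of transgression. First I would observe that $X':=\varprojlim X_n$ and $Y':=\varprojlim Y_n$ are simply connected \emph{minimal} Kan complexes, since by Definition \ref{3d1} each $p_n^X$ and $p_n^Y$ is a minimal Kan fibration and minimal fibrations are closed under sequential inverse limits. Proposition \ref{TGJ1} then identifies the given standard tower $\{X_n\}$ with the Moore-Postnikov tower $\{X'(n)\}$ of $X'$ through canonical isomorphisms $h_n^X\colon X_n\to X'(n)$ compatible with the canonical projections $\varphi_n^{X'}$; the analogous statement holds for $Y'$.

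Next, since $f\colon X'\to Y'$ is simplicial, it preserves the relations $\sim_n$ (two $q$-simplices are related iff they agree on $\operatorname{sk}_n(\Delta^q)$), so $f$ descends to simplicial maps $\bar f_n\colon X'(n)\to Y'(n)$. Transporting these across the isomorphisms $h_n^X$, $h_n^Y$ defines $f_n\colon X_n\to Y_n$. The left-hand square commutes because $p(n)\circ\varphi(n)=\varphi(n-1)$ in the Moore construction, and the right-hand square commutes by the very definition of $f_n$ as the descent of $f$.

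For the equation on Postnikov classes I would proceed by fiber comparison and naturality. By Proposition \ref{P} and Definition \ref{3d1}, $f_n$ is a map of principal Eilenberg-MacLane fibrations, and its restriction $\bar f_n|_{\text{fiber}}\colon K(\pi_n^X,n)\to K(\pi_n^Y,n)$ is classified by a homomorphism $\gamma\colon \pi_n^X\to \pi_n^Y$, which via the composite \eqref{3e4} agrees with the map induced by $f$ on the relevant homology groups (equivalently, on $\pi_n$). Under the standard identification $H^n(K(\pi,n);\pi')\cong \operatorname{Hom}(\pi,\pi')$, one has $\bar f_n^*([\iota_n^Y])=\gamma_*([\iota_n^X])$. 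Applying Lemma \ref{trans} on both sides together with the naturality of the Serre transgression (and its commutation with the coefficient change $\gamma_*$), I obtain
\[
f_{n-1}^*([\kappa_{n-1}^Y])=f_{n-1}^*\tau^Y([\iota_n^Y])=\tau^X(\bar f_n^*[\iota_n^Y])=\tau^X(\gamma_*[\iota_n^X])=\gamma_*\tau^X([\iota_n^X])=\gamma_*([\kappa_{n-1}^X]).
\]

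Finally, if $f$ is a weak homotopy equivalence then it induces isomorphisms on every $\pi_k$, so $\gamma$ is an isomorphism and each $f_n$ induces isomorphisms on $\pi_k$ for $k\le n$ (the higher groups vanish by definition of the Postnikov tower). Since $X_n$ and $Y_n$ are minimal Kan complexes, Theorem 2.20 of \cite{C}, already cited in the proof of Proposition \ref{TGJ1}, upgrades these weak equivalences to isomorphisms. The main obstacle I anticipate is keeping the two different cocycle representatives straight: one has to verify carefully that the $\gamma$ coming from the fiber map $\bar f_n|_{\text{fiber}}$ is the same as the homomorphism $\pi_n^X\to\pi_n^Y$ induced by $f$ via the effective identifications $\pi_n^Y=H_{n+1}(\EM_*)$ from \eqref{3e4}, so that the coefficient-change symbol $\gamma_*$ in the statement is unambiguous; this is essentially bookkeeping through the diagram \eqref{3e9} combined with naturality of the Eilenberg--Zilber, Hurewicz and connecting isomorphisms.
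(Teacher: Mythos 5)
Your proposal is correct and follows essentially the same route as the paper: identify the standard towers with the Moore--Postnikov towers via Proposition \ref{TGJ1}, use functoriality of Moore's construction to descend $f$ to maps of stages, and derive \eqref{eq} from Lemma \ref{trans} together with naturality of the transgression applied to the fibre map $\bar f\colon K(\pi_n^X,n)\to K(\pi_n^Y,n)$. The only cosmetic difference is that you spell out the descent through the relations $\sim_n$ and make explicit that $X'$, $Y'$ are minimal Kan complexes, which the paper leaves implicit.
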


\begin{proof}
Start with deriving both diagrams. Note that $\{X_n,\kappa_n^X\}$ and $\{Y_n,\kappa_n^Y\}$ with canonical maps $\varphi_n^{X'}\colon X'\to X_n$ and $\varphi_n^{Y'}\colon Y'\to Y_n$ are standard Postnikov towers for $X'$ and $Y'$, respectively. Apply  Proposition \ref{TGJ1} for these Postnikov towers to compare them with the Moore-Postnikov towers
of $X'$ and $Y'$, respectively. The isomorphisms $h\colon Y'\to Y'$, $g\colon X'\to X'$ between the towers together with the functoriality of the Moore-Postnikov towers provide:
\begin{center}
\begin{tikzpicture}
\matrix (m) [matrix of math nodes, row sep=2em,
column sep=4em, minimum width=2em]
{  X' & X' & Y' & Y'  \\
  X_{n} & X'(n) & Y'(n) & Y_{n} \\
  X_{n-1} & X'(n-1) & Y'(n-1) & Y_{n-1}\\};
  \begin{scope}[every node/.style={scale=.8}]
\path[->](m-1-1) edge node[above] {$g$} node[below] {$\cong$} (m-1-2);
\path[->](m-2-1) edge node[below] {$\cong$} (m-2-2);
\path[->](m-3-1) edge node[below] {$\cong$} (m-3-2);
\path[->](m-1-2) edge node[above] {$hfg^{-1}$} (m-1-3);
\path[->](m-2-2) edge node[below] {$(hfg^{-1})(n)$} (m-2-3);
\path[->](m-3-2) edge node[below] {$(hfg^{-1})(n-1)$} (m-3-3);
\path[<-](m-1-3) edge node[above] {$h$} node[below] {$\cong$} (m-1-4);
\path[<-](m-2-3) edge node[below] {$\cong$} (m-2-4);
\path[<-](m-3-3) edge node[below] {$\cong$} (m-3-4);
\path[->](m-1-2) edge node[left] {$\varphi^{X'}({n})$} (m-2-2);
\path[->](m-1-1) edge node[left] {$\varphi^{X'}_{n}$} (m-2-1);
\path[->](m-2-1) edge node[left] {$p_{n}^X$} (m-3-1);
\path[->](m-2-2) edge node[left] {$p^X({n})$} (m-3-2);
\path[->](m-2-3) edge node[right] {$p^Y({n})$} (m-3-3);
\path[->](m-2-4) edge node[right] {$p_{n}^Y$} (m-3-4);
\path[->](m-1-3) edge node[right] {$\varphi^{Y'}({n})$} (m-2-3);
\path[->](m-1-4) edge  node[right] {$\varphi^{Y'}_n$} (m-2-4);
\path[->](m-3-1) edge[bend right] node[below] {$f_{n-1}$} (m-3-4);
\path[->](m-1-1) edge[bend left] node[above] {$f$} (m-1-4);
\end{scope}
\end{tikzpicture}
\label{f3}
\end{center}
The commutative diagram defines horizontal maps $f_n\colon X_n\to Y_n$ for all $n\in \mathbb N$ and gives the commutative diagrams
in our theorem. If $f$ is a simplicial weak homotopy equivalence then maps $(hfg^{-1})(n)$ and $f_n$ are also simplicial weak homotopy equivalences between minimal Kan complexes, so they are isomorphisms by Theorem 2.20 in \cite{C}.

It remains to show that equation \eqref{eq} holds. 
To complete this task, take the commutative diagram:
\begin{equation*}
\begin{tikzpicture}
\matrix (m) [matrix of math nodes, row sep=2em,
column sep=2em, minimum width=2em]
{ K(\pi_n^X,n)  & K(\pi_n^Y,n)   \\
 X_n & Y_n \\
  X_{n-1} & Y_{n-1} \\
};
  \begin{scope}[every node/.style={scale=.8}]
\path[->](m-1-1) edge node[above] {$\bar f$} (m-1-2);
\path[->](m-1-1) edge  (m-2-1);
\path[->](m-2-1) edge node[above] {$f_{n}$}  (m-2-2);
\path[->](m-1-2) edge  (m-2-2);
\path[->](m-2-1) edge node[left] {$p_n^X$} (m-3-1);
\path[->](m-2-2) edge node[right] {$p_n^Y$} (m-3-2);
\path[->](m-3-1) edge node[below] {$f_{n-1}$} (m-3-2);
\end{scope}
\end{tikzpicture}
\end{equation*}
Consider the homorphism $\gamma\colon\pi_n^X\to\pi_n^Y$ determined by the commutative square:
\begin{equation*}
\begin{tikzpicture}
\matrix (m) [matrix of math nodes, row sep=2em,
column sep=2em, minimum width=2em]
{ H_n(K(\pi_n^X,n))  &[2cm] H_n(K(\pi_n^Y,n))   \\
\pi_n^X  & \pi_n^Y\\
};
  \begin{scope}[every node/.style={scale=.8}]
\path[->](m-1-1) edge node[above] {$\bar f_*$} (m-1-2);
\path[->](m-1-1) edge node[left] {$\ev(\id_{K(\pi_n^X,n)}),\cong$}  (m-2-1);
\path[->](m-2-1) edge node[below] {$\gamma$}  (m-2-2);
\path[->](m-1-2) edge node[right] {$\ev(\id_{K(\pi_n^Y,n)}),\cong$}  (m-2-2);
\end{scope}
\end{tikzpicture}
\end{equation*}

Using the naturality of the transgression and Lemma \ref{trans} we obtain: 
\begin{equation*}
 f_{n-1}^*([\kappa^Y_{n-1}])=f_{n-1}^*(\tau([\iota_n^Y]))=\tau(\bar f^*([\iota_n^Y]))=\tau(\gamma_*([\iota_n^X])
=\gamma_*(\tau([\iota_n^X]))=\gamma_*([\kappa_{n-1}^X]).
\end{equation*}
Finally, if $f_n$ and $f_{n-1}$ are simplicial weak homotopy equivalences, then $\bar f$ is also a simplicial weak homotopy equivalence.
Next, the induced map $\bar f_*$ is an isomorphism (see the last diagram) which implies that $\gamma$ is an isomorphism. 
\end{proof}

\algline
\medskip

Consequently, we can formulate a well-known assertion that the homotopy type of a finite dimensional simplicial complex is given
only by a finite part of its Postnikov tower. From an algorithmic point of view, this fact plays a crucial role.

\begin{cor}\label{criterion}
Let $X$ and $Y$ be finite simply connected simplicial sets of dimensions $\le d$ with effective Postnikov towers $\{X_n\}$ and $\{Y_n\}$, respectively. Then $|X|$ and $|Y|$ are homotopy equivalent if and only if there is a simplicial weak homotopy equivalence
\[f_d\colon X_d\to Y_d.\]
\end{cor}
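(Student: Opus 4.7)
The strategy is to prove the two directions separately. The forward direction follows essentially directly from the machinery already assembled in the paper, while the backward direction reduces to an obstruction-theoretic argument that exploits the bound on the CW-dimension of $|X|$ and $|Y|$.

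For the forward direction, suppose $|X|\simeq |Y|$. By Lemma \ref{BL} this homotopy equivalence corresponds to a simplicial weak homotopy equivalence $f\colon X'\to Y'$, and Theorem \ref{nec} then produces compatible maps $f_n\colon X_n\to Y_n$ at every stage of the Postnikov towers, all of which are in fact isomorphisms of minimal Kan complexes. In particular $f_d$ is a simplicial weak homotopy equivalence, as required.

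For the backward direction, the key observation is that $\varphi_d^Y\colon Y\to Y_d$ induces isomorphisms on $\pi_k$ for $k\le d$ and that the higher homotopy groups of $Y_d$ vanish, so its geometric realization is a $(d+1)$-equivalence. Since $|X|$ is a CW complex of dimension at most $d$, classical obstruction theory gives that $(\varphi_d^Y)_*\colon [|X|,|Y|]\to [|X|,|Y_d|]$ is a bijection. Applying the inverse to the class of the composite $f_d\circ \varphi_d^X\colon |X|\to |Y_d|$ yields a map $f\colon |X|\to |Y|$ with $\varphi_d^Y\circ f\simeq f_d\circ \varphi_d^X$. A short diagram chase on $\pi_k$ for $k\le d$ shows that $f$ is itself a $(d+1)$-equivalence, since the three other maps in the square induce isomorphisms there.

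It remains to upgrade the $(d+1)$-equivalence $f$ to a genuine homotopy equivalence. For this I would use the dimension-truncated Whitehead argument: since $|Y|$ also has dimension at most $d$, the same obstruction-theoretic bijection applied with $|Y|$ as source produces a map $g\colon |Y|\to |X|$ with $f\circ g\simeq \id_{|Y|}$; this $g$ is then also a $(d+1)$-equivalence, and iterating the construction yields a left inverse of $f$ by the usual two-out-of-three trick. The main obstacle I expect is not any deep calculation but making the two obstruction-theoretic bijections rigorous at the boundary case where the dimension of the source matches the connectivity exactly; I would appeal to a standard reference (e.g.\ Hatcher's treatment of the Whitehead theorem and its relative form) rather than redo the obstruction argument from scratch.
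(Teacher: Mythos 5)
Your proof follows the paper's strategy almost exactly: the forward direction via Lemma \ref{BL} and Theorem \ref{nec} is identical, and the backward direction opens with the same obstruction-theoretic bijection $(\varphi_d^Y)_*\colon[|X|,|Y|]\to[|X|,|Y_d|]$ used to pull the class of $f_d\circ\varphi_d^X$ back to a map $F\colon|X|\to|Y|$. The two arguments part ways only at the very last step: the paper notes that $F$ induces isomorphisms on $\pi_i$ for $i\le d$, hence on $H_i(-;\Zbb)$ for all $i$ (both spaces having dimension $\le d$), and concludes by the homology Whitehead theorem for simply connected spaces, whereas you construct a homotopy inverse directly by further applications of the compression argument. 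Both finishes work, but one caution about yours as written: the claim that $F$ is a $(d+1)$-equivalence does not follow from the diagram chase. The square $\varphi_d^Y\circ F\simeq f_d\circ\varphi_d^X$ only yields isomorphisms on $\pi_k$ for $k\le d$; surjectivity on $\pi_{d+1}$ cannot be read off because $\pi_{d+1}(|Y_d|)=0$ carries no information. This matters because injectivity of $F_*\colon[W,|X|]\to[W,|Y|]$ for $\dim W=d$ genuinely requires $(d+1)$-connectivity of the mapping-cylinder pair, so the ``bijection'' you invoke with $|Y|$ as source is really only a surjection a priori. Your iteration scheme can be repaired to use surjectivity alone (which needs only $d$-connectivity): find $g$ with $F g\simeq\id$, observe that $g$ is again an isomorphism on $\pi_i$ for $i\le d$, find $h$ with $g h\simeq\id$, and deduce $g F\simeq g(F g)h\simeq\id$. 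The paper's detour through homology exists precisely to sidestep this boundary case, which you correctly flag as the delicate point but resolve by an appeal to a ``bijection'' that is not yet available at that stage.
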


\begin{proof}
If $|X|$ and $|Y|$ are homotopy equivalent, then by Lemma \ref{BL} there is a simplicial weak homotopy equivalence 
$f\colon X':=\varprojlim X_n\to Y':=\varprojlim Y_n$ and by Theorem \ref{nec}
there is a simplicial weak homotopy equivalence $f_d\colon X_d\to Y_d$.

Conversely, suppose that there is a simplicial weak homotopy equivalence $f_d\colon X_d\to Y_d$. Since $|X|$ is a CW-complex of dimension $d$ and $|\varphi_d^Y|:|Y|\to |Y_d|$ is an isomorphism in homotopy groups $\pi_i$ for $i\le d$ and an epimorphism for $i=d+1$, the induced map $|\varphi_d^Y|_*\colon [|X|,|Y|]\cong [|X|,|Y_d|]$ is a bijection.
Let $F\colon |X|\to |Y|$ be a map which homotopy class corresponds to the homotopy class of $|f_d\circ\varphi_d^X|\colon
|X|\to |Y_d|$. Then $F$ induces isomorphisms in homotopy groups $\pi_i$ for $i\le d$. Since $|X|$ and $|Y|$ have dimensions $\le d$, it induces  also isomorphisms in homology groups $H_i(-;\mathbb Z)$ for all $i$. Therefore $F\colon |X|\to|Y|$ is a homotopy equivalence by Whitehead Theorem. 
\end{proof}

The following theorem and its proof show that equation \eqref{eq}, where $\gamma$ is an isomorphism, is sufficient for algorithmic construction of a simplicial map lifting a given homotopy equivalence between Postnikov $(n-1)$-stages to a homotopy equivalence between $n$-stages.

\begin{thm}\label{suf}
Let $X$ and $Y$ be simply connected simplicial sets with effective homology. Let $\{X_{n}, p_n^X, \varphi_n^X\}$ and 
$\{Y_{n}, p_n^Y,\varphi_n^Y\}$ be their effective Postnikov towers with Postnikov cocycles $\kappa_{n}^X$ and $\kappa_{n}^Y$, respectively. Assume that there are a computable simplicial map $f_{n-1}\colon X_{n-1}\to Y_{n-1}$ and an isomorphism $\gamma\colon \pi_{n}^X\to\pi_{n}^Y$ such that relation \eqref{eq}
\begin{equation*}
\gamma_*[{\kappa}_{(n-1)*}^{X}]=f_{n-1}^*[\kappa_{(n-1)*}^Y]\label{e3}
\end{equation*}
holds. Then:
\begin{itemize}
\item[(1)] A lift  of $f_{n-1}p_n^X$ to a map $f_n\colon X_n\to Y_n$ exists, i.e. the diagram 
\begin{center}
\begin{tikzpicture}
\matrix (m) [matrix of math nodes, row sep=2em,
column sep=2em, minimum width=2em]
{  X_n & Y_n  \\
  X_{n-1} & Y_{n-1} \\};
  \begin{scope}[every node/.style={scale=.8}]
\path[->](m-1-1) edge node[above] {$f_n$} (m-1-2);
\path[->>](m-1-2) edge node[right] {$p_n^Y$} (m-2-2);
\path[->>](m-1-1) edge node[left] {$p_n^X$} (m-2-1);
\path[->](m-2-1) edge node[below] {$f_{n-1}$} (m-2-2);
\end{scope}
\end{tikzpicture}
\end{center}
commutes.
\item[(2)]If we consider $X_n$ and $Y_n$ as subsets of $X_{n-1}\times E(\pi_n^X,n)$
and $Y_{n-1}\times E(\pi_n^Y,n)$, respectively, such a map $f_n$ can be defined by 
\begin{equation}
f_{n}(x,y)=(f_{n-1}(x),\gamma_*(y + \omega(x)))\label{form}
\end{equation}
for some/any $\omega\colon X_{n-1}\to E(\pi_{n}^X,n)$ satisfying
\begin{equation}
\ev^{-1}(\gamma_*^{-1}f^*_{n-1}\kappa^Y_{n-1}-\kappa^X_{n-1})=\delta\omega.\label{cond}
\end{equation}
\item[(3)]There is an algorithm which computes one such $\omega$.
\item[(4)]The ambiguity in the definition of $\omega$ is up to maps $c\colon X_{n-1}\to K(\pi_{n}^X,n)$. 
\end{itemize}

If $f_{n-1}$ is a simplicial weak homotopy equivalence, so is $f_n$ (and even an isomorphism of simplicial sets). 
\end{thm}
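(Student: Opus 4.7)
My plan is to solve the lifting problem at the cochain level by converting the cohomological equation \eqref{eq} into an explicit coboundary equation, then read off $f_n$ from the pullback descriptions of $X_n$ and $Y_n$. First I would translate the hypothesis: using the isomorphism $\gamma$ to rewrite $\gamma_*[\kappa^X_{n-1}] = f_{n-1}^*[\kappa^Y_{n-1}]$ as the vanishing of the class $[\gamma_*^{-1} f_{n-1}^*\kappa^Y_{n-1} - \kappa^X_{n-1}]$ in $H^{n+1}(X_{n-1};\pi_n^X)$, which yields a primitive cochain $\bar\omega \in C^n(X_{n-1};\pi_n^X)$. Via the bijection $\ev$, this corresponds to a simplicial map $\omega := \ev^{-1}(\bar\omega)\colon X_{n-1} \to E(\pi_n^X,n)$ satisfying \eqref{cond}. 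This is the main content behind (1) and (2).

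The next step is to verify that the formula $f_n(x,y) = (f_{n-1}(x), \gamma_*(y+\omega(x)))$ defines a simplicial map into $Y_n$. For a $k$-simplex $(x,y)$ of $X_n \subset X_{n-1}\times E(\pi_n^X,n)$ we have $\delta y = k^X_{n-1}(x)$ by the pullback condition for $X_n$, and $\delta(\omega(x)) = \gamma_*^{-1} k^Y_{n-1}(f_{n-1}(x)) - k^X_{n-1}(x)$ by \eqref{cond}. Therefore $\delta(\gamma_*(y+\omega(x))) = \gamma_*(\delta y + \delta \omega(x)) = k^Y_{n-1}(f_{n-1}(x))$, which is exactly the pullback condition for $Y_n$; simpliciality and strict commutativity of the lifting diagram in (1) follow from the first-coordinate projection and the naturality of $\delta$ and $\gamma_*$. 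For (4), any two $\omega, \omega'$ satisfying \eqref{cond} differ by an element of the kernel of $\delta$, that is, a cocycle in $Z^n(X_{n-1};\pi_n^X)$, which via $\ev$ corresponds exactly to a simplicial map $c\colon X_{n-1} \to K(\pi_n^X,n)$.

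For the algorithmic part (3), I would invoke the effective homology framework of Section 2. Since $X_{n-1}$ carries effective homology and $\pi_n^X$ is a finitely generated abelian group, both cocycles $\kappa^X_{n-1}$ and $\gamma_*^{-1} f_{n-1}^*\kappa^Y_{n-1}$ are computable, and so is their difference. The reductions comprising the effective homology of $X_{n-1}$ allow us to transport this difference to the finite-dimensional effective model $\EC_*(X_{n-1})$, where its exactness is witnessed constructively by solving a finite-rank $\Zbb$-linear system, and then to pull the primitive back through the chain homotopy to obtain $\bar\omega$, hence $\omega$, explicitly.

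Finally, for the weak-equivalence statement, the explicit formula shows that $f_n$ restricts on the fibers of $p_n^X$ and $p_n^Y$ (over $f_{n-1}$) to the map $K(\pi_n^X,n) \to K(\pi_n^Y,n)$ induced by the isomorphism $\gamma$, which is itself a weak equivalence. Applying the five-lemma to the long exact sequences of homotopy groups of the two fibrations, together with $f_{n-1}$ being a w.h.e.\ by assumption, shows $f_n$ is a w.h.e.; since $X_n$ and $Y_n$ are minimal Kan complexes by Definition \ref{3d1}, Theorem 2.20 of \cite{C} upgrades this to an isomorphism of simplicial sets. The main obstacle throughout is step (3): converting a cohomological equality into an algorithmically accessible cochain solution requires the full effective homology machinery, and without it the argument for (1), (2), (4) remains purely existential.
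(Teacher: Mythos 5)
Your proposal is correct and follows essentially the same route as the paper: translate \eqref{eq} into the vanishing of $[\gamma_*^{-1}f_{n-1}^*\kappa^Y_{n-1}-\kappa^X_{n-1}]$, obtain $\omega$ as a primitive (the paper phrases this equivalently as a lift through $\delta\colon E(\pi_n^X,n)\to K(\pi_n^X,n+1)$, with \eqref{cond} being the $\ev$-translation of that lifting square), verify $\delta(\gamma_*(y+\omega(x)))=k^Y_{n-1}(f_{n-1}(x))$ so that the explicit formula lands in $Y_n$, and observe that the ambiguity is a cocycle, i.e.\ a map to the fibre $K(\pi_n^X,n)$. The only cosmetic differences are that the paper delegates part (3) to Lemma 2.11 of \cite{aslep} where you sketch the effective-homology computation directly, and that you spell out the five-lemma/minimality argument for the final weak-equivalence claim, which the paper leaves implicit.
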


\begin{app}\label{ap}
Under the same assumptions consider the natural fibrewise action $+\colon Y_{n}\times K(\pi_{n}^Y,n)\to Y_{n}$. Then
any two maps $f_n,f_n'\colon X_n\to Y_n$ making the diagram in Theorem \ref{suf} commutative up to homotopy differ by a map $\bar d\colon X_{n}\to K(\pi_{n}^Y,n)$ such that 
\begin{equation*}
f_{n}' \sim f_{n}+ \bar d.
\end{equation*}
Furthermore, if we assume that for the fibre inclusions  $\iota^X\colon K(\pi_{n}^X,n)\to X_{n}$ and $\iota^Y\colon K(\pi_{n}^Y,n)\to Y_{n}$
\[f_{n}\circ \iota^X\sim f_{n}'\circ \iota^X\colon K(\pi_{n}^X,n)\to \iota^Y(K(\pi_{n}^Y,n)),\] 
then there exists a map $c\colon X_{n-1}\to K(\pi_{n}^X,n)$ satisfying
\begin{align*}
f_{n}'&\sim f_{n}+\gamma_* \circ c\circ p_{n}^X
\end{align*}
where $\gamma_*\colon K(\pi_{n}^X,n)\to K(\pi_{n}^Y,n)$ is determined by the isomorphism $\gamma$ from
(\ref{eq}).
\end{app}

\medskip
\begin{proof}[Proof of Theorem \ref{suf}]
Consider the following diagram 
\begin{equation}
\begin{tikzcd}[
  row sep=scriptsize,
  column sep=scriptsize,
]
{X_{n}}\ar[rr,dashed," "]\ar[dr]\ar[dd,->>,"{p_n^X}"] & & Y_{n}\ar[dr]\ar[dd,->>,"p_n^Y",near end] & \\
& E(\pi^X_{n},n)\ar[rr,dashed,"\gamma_*",near start,crossing over]\ar[dd,->>,"\delta",crossing over,near start,swap] & & E(\pi^Y_n,n)\ar[dd,->>,"\delta",swap] & \\
X_{n-1}\ar[rr,"f_{n-1}",near start]\ar[dr,"{k_{n-1}^X}"] & & Y_{n-1}\ar[dr,"k_{n-1}^Y"] & \\
& K(\pi^X_{n},n+1)\ar[rr,"\gamma_*"] & & K(\pi^Y_n,n+1) &
\end{tikzcd}\label{cube}
\end{equation}
where the squares given by solid arrows commute. Complete the diagram by the map
$$\gamma_*\colon E(\pi_n^X, n)\to E(\pi_n^Y, n)$$
which is induced by the isomorphism $\gamma\colon \pi_n^X\to\pi_n^Y$. Then the front square is also commutative. Our aim is to give an algorithmic construction of a map
\begin{equation*}
f_n\colon X_{n-1}\to Y_{n-1}\times E(\pi_n^Y, n)
\end{equation*}
such that its image lies in $Y_n\subseteq Y_{n-1}\times E(\pi_n^Y, n)$ and it makes the remaining squares commutative.

Since $\gamma_*[\kappa_{n-1}^X]=f^*_{n-1}[\kappa_{n-1}^Y]$, the cohomology class of the difference of corresponding maps 
$$k^Y_{n-1}\circ f_{n-1}-\gamma_*\circ k_{n-1}^X\colon X_{n-1}\to K(\pi_n^Y,n+1)$$
is trivial. This applies also to the map
$\gamma_*^{-1}\circ k^Y_{n-1}\circ f_{n-1}-k_{n-1}^X\colon X_{n-1}\to K(\pi_n^X,n+1)$. So there is a lift 
$\omega\colon X_{n-1}\to E(\pi_n^X,n)$ in the diagram
\begin{center}
\begin{tikzpicture}
\matrix (m) [matrix of math nodes, row sep=3.0em,
column sep=9em, minimum width=2em]
{   & E(\pi_n^X,n)  \\
  X_{n-1} & K(\pi_n^X,n+1) \\
};
  \begin{scope}[every node/.style={scale=.8}]
\path[->](m-2-1) edge node[below] {$\gamma_*^{-1}\circ k_{n-1}^Y\circ f_{n-1}-k_{n-1}^X$} (m-2-2);
\path[->>](m-1-2) edge node[right] {$\delta$} (m-2-2);
\path[->](m-2-1) edge node[auto] {$\omega$} (m-1-2);
\end{scope}
\end{tikzpicture}
\end{center}
Condition (\ref{cond}) only rewrites the commutativity of this diagram in terms of Postnikov cocycles and the evaluation map. 

According to Lemma 2.11 in \cite{aslep}  one can compute such a single lift algorithmically. 
Any two lifts differ by a map $c\colon X_{n-1}\to K(\pi_n^X,n)$ to the fiber of the fibration $\delta$. 

For any lift $\omega\colon X_{n-1}\to E(\pi_n^X, n)$ let us define $f_n\colon X_{n-1}\times E(\pi_n^X,n)\to Y_{n-1}\times E(\pi_n^Y,n)$  
by the formula
\[f_n(x,y)=\left(f_{n-1}(x),\gamma_*(y+\omega(x))\right).\]
The restriction of this map to $X_n$ has image in $Y_n$ since for $(x,y)\in X_n\subseteq X_{n-1}\times E(\pi_n^X,n)$ we have $\delta y=k_{n-1}^X(x)$ and that is why 
\begin{align*}
\delta\left(\gamma_*(y+\omega(x))\right)&=\gamma_*\delta(y)+\gamma_*\delta(\omega(x))=
\gamma_*k_{n-1}^X(x)+\gamma_*(\gamma_*^{-1}{k_{n-1}^Y}f_{n-1}-k_{n-1}^X)(x)\\
&=k_{n-1}^Yf_{n-1}(x).
\end{align*}
It also shows that $f_n$ is a lift of $f_{n-1}$. 
\end{proof}

\begin{proof}[Proof of Addendum \ref{ap}]
The assumption says that 
\[p^Y_n\circ f_n\sim f_{n-1}\circ p^X_{n}\sim p^Y_n\circ f'_n. \]
Since $p_n^Y$ is a principal $K(\pi_n^Y,n)$-fibration, there is a map $\bar d\colon X_n\to K(\pi_n^Y,n)$ such that $f'_n\sim f_n+\bar d$.
Consider the long Serre exact sequence of the fibration $p_n^X$:
\[ \dots \to  H^n(X_{n-1},\pi_n^Y)\to H^n(X_{n},\pi_n^Y)\to H^n(K(\pi_n^X,n),\pi_n^Y)\to \dots\]
If the maps $f_n$ and $f_n'$ induce homotopic maps on fibres, the image of the class $[\bar d]=[f'_n-f_n]\in H^n(X_{n},\pi_n^Y)$ vanishes in $H^n(K(\pi_n^X,n);\pi_n^Y)$, so a difference of $f_n$ and $f'_n$ is homotopic to a certain map $d\colon X_{n-1}\to K(\pi_n^Y,n)$, i.e. $f'_n\sim f_n+d\circ p_n^X$. As the isomorphism $\gamma\colon\pi_n^X\to \pi_n^Y$ induces simplicial sets isomorphism 
$\gamma_*\colon K(\pi_n^X,n)\to K(\pi_n^Y,n)$, there is a map $c\colon X_{n-1}\to K(\pi_n^X,n)$ such that $f'_n\sim f_n+\gamma_*\circ c\circ p_n^X$.
\end{proof}

\begin{rem}
Here, we want to point out that the specific form of tower maps \eqref{form} is fully matching with the characterisation of simplicial maps between principal twisted cartesian products (PTCP) from May \cite[Section 31]{M}. For our specific PTCP's $X_n=E(\tau^X)=K(\pi_n^X,n)\times_{\tau^X} X_{n-1}$ and $Y_n=E(\tau^Y)=K(\pi^Y_n,n)\times_{\tau^Y} Y_{n-1} $ with $\tau^X=\tau\circ k^X_{n-1}$ and $\tau^Y=\tau\circ k^Y_{n-1}$, respectively, where $\tau$ is the twisted operator for the twisted product 
$E(\pi_n,n)=K(\pi_n,n)\times_\tau K(\pi_n,n+1)$. According to \cite{M} the morphisms of PTCP's $X_n=E(\tau^X)\to E(\tau^Y)=Y_n$ have the form
\[\theta(y,x)=(\alpha(y)+\xi(x),\beta(x)),\]
where $y\in K(\pi_n^X,n)$, $x\in X_{n-1}$, $\beta\colon X_{n-1}\to Y_{n-1}$ is a simplicial map, $\alpha\colon K(\pi^X_n,n)\to K(\pi^Y_n,n)$ is a simplicial homomorphism and $\xi\colon X_{n-1}\to K(\pi^Y_n,n)$ is a function satisfying the certain simplicial identities denoted as (U) in \cite{M}. The transition between PTCP's and pullbacks provides the simplicial isomorphism $\varphi\colon X_{n-1}\times_{k^X_n} E(\pi_n^X,n)\to E(\tau^X)$
\begin{equation}\label{ptcp}
\varphi(x,y) = (y - \psi(k^X_n(x)), x)
\end{equation}
where $\psi\colon K(\pi_n^X,n+1)\to K(\pi_n^X,n)$ is the pseudo-cross section. Analogically, identify the stage $Y_n$ with its PTCP's counterpart $E(\tau^Y)$ via $\overline{\varphi}\colon  E(\tau^Y)\to Y_{n-1}\times_{k^Y_n} E(\pi_n^Y,n)$
\[\overline{\varphi}(y,x) = (x, y +\psi(k^Y_n(x))).\]
Thus, the form of morphism (\ref{ptcp}) translates to $\overline{\varphi}\theta\varphi\colon X_{n-1}\times_{k^X_n} E(\pi_n^X,n)\to Y_{n-1}\times_{k^Y_n} E(\pi_n^Y,n)$ 
\[(x,y)\longmapsto (\beta(x), \alpha(y)+ \psi(k^Y_n(\beta(x)))-\alpha(\psi(k^X_n(x)))+\xi(x)).\]
One can easily examine from underlying definitions that the map \[\Omega(x)=\psi(k^Y_n(\beta(x)))-\alpha(\psi(k^X_n(x)))+\xi(x)\] 
represents an arbitrary simplicial map $X_{n-1}\to E(\pi_n^Y,n)$. This description clearly matches with our form \eqref{form}.
\end{rem}

\section{Effective homotopy equivalence and self-equivalence}

To decide if two spaces are homotopy equivalent, it is essential to describe the homotopy equivalences which can be constructed inductively by the algorithm from Theorem \ref{suf}.
It turns out that such homotopy equivalences behave well concerning the composition of maps and represent all homotopy equivalences up to homotopy. Simultaneously, it will be helpful to give a similar description of homotopy self-equivalence, described in Corollary \ref{cor}. 

This corollary is a crucial step in proving Theorem A since it translates our problem of algorithmically deciding whether two finite simplicial sets are homotopy equivalent to a problem in algorithmic group theory.

The first part of the section introduces effective versions of homotopy equivalences between the stages of the Postnikov towers. The definition is motivated by Theorem \ref{suf}.

\begin{defn}
Let $\{X_{n}\}$ and $\{Y_{n}\}$ be effective Postnikov towers for simply-connected simplicial sets $X$ and $Y$, respectively. Denote  by
$\iso(X_{n},Y_{n})$ the set of all simplicial weak homotopy equivalences $X_{n}\to Y_{n}$ and $\aut(X_{n})$ the set of all simplicial weak homotopy self-equivalences $X_{n}\to X_{n}$. Let $\Iso(X_{n},Y_{n})$
and $\Aut(X_{n})$ be the sets of their homotopy classes, respectively, i.e.
\[\Iso(X_{n},Y_{n})=\iso(X_{n}, Y_{n})/\sim,\quad \Aut(X_{n})=\aut(X_{n})/\sim.\] 
The sets of effective homotopy equivalences between the Postnikov stages $X_{n}$ and $Y_{n}$ are defined inductively as
\begin{align*}
\isoef(X_{1},Y_{1}):=& \{\id:X_{1}=*\to *=Y_{1}\},\\
\isoef(X_{n},Y_{n}):=&\{f_{n}\colon X_{n}\to Y_{n}|\ f_{n} \text{ has the form } \eqref{form} \text{ with } f_{n-1}\in \isoef(X_{n-1},Y_{n-1}) \\
               &\ \text{ and satisfies the condition } \eqref{cond}\},\ n\ge 2.
\end{align*}
The symbol $\Isoef(X_{n},Y_{n})$ will stand for the set of their homotopy classes
\[\Isoef(X_{n},Y_{n})=\isoef(X_{n},Y_{n})/\sim.\] 
Furthermore, we define the set of effective homotopy self-equivalences $X_{n}\to X_{n}$ as
\[\autef(X_{n}):=\isoef(X_{n},X_{n}),\quad \Autef(X_{n}):=\Isoef(X_{n},X_{n})\]
\end{defn}

Notice that using this new notation Theorem \ref{suf} says that if  $f_{n-1}\in \isoef(X_{n-1},Y_{n-1})$
and an isomorphism $\gamma:\pi_n^X\to\pi_n^Y$ satisfy condition (\ref{eq}) then there is an algorithm
constructing $f_n\in\isoef(X_{n},Y_{n})$ which is a lift of $f_{n-1}$. At the same time, the theorem says that $f_n$ is a simplicial weak homotopy equivalence as $f_1=\id$ has the same property.

The following statement shows that the sets of effective homotopy equivalences have the nice properties promised above.

\begin{prop}\label{isoef}
Let $X$, $Y$ and $Z$ be simply connected simplicial sets with effective homology. Let $\{X_{n}, k_{n}^X, \varphi_{n}^X\}$, 
$\{Y_{n}, k_{n}^Y,\varphi_{n}^Y\}$ and $\{Z_{n}, p_{n}^Z,\varphi_{n}^Z\}$  be their effective Postnikov towers. 
\begin{enumerate}
    \item If $f\in\isoef(X_{n},Y_{n})$ and $g\in\isoef(Y_{n},Z_{n})$, then $g\circ f\in \isoef(X_{n},Z_{n})$.
    \item If $f\in\isoef(X_{n},Y_{n})$, then the inverse $f^{-1}\in\isoef(Y_{n},X_{n})$.
    \end{enumerate}
Particularly, the sets $\autef(X_{n})$ together with composition of maps are groups.    
\end{prop}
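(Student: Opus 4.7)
The plan is to proceed by induction on $n$, with the trivial base case $n = 1$ (both $X_1$ and $Y_1$ are points). For (1), given $f_n \in \isoef(X_n, Y_n)$ and $g_n \in \isoef(Y_n, Z_n)$ written in the form \eqref{form} as $f_n(x, y) = (f_{n-1}(x), \gamma_*(y + \omega(x)))$ and $g_n(u, v) = (g_{n-1}(u), \eta_*(v + \omega'(u)))$ with isomorphisms $\gamma \colon \pi_n^X \to \pi_n^Y$ and $\eta \colon \pi_n^Y \to \pi_n^Z$, I would directly substitute. Using the additivity of $\gamma_* \colon E(\pi_n^X, n) \to E(\pi_n^Y, n)$ (which is induced by an abelian-group homomorphism on the cochain level), one obtains
\[(g_n \circ f_n)(x, y) = \bigl(g_{n-1} f_{n-1}(x),\ (\eta \gamma)_*(y + \tilde\omega(x))\bigr)\]
with $\tilde\omega := \omega + \gamma_*^{-1}\circ \omega' \circ f_{n-1}$. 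This matches the shape \eqref{form} with lower stage $g_{n-1} \circ f_{n-1} \in \isoef(X_{n-1}, Z_{n-1})$ (provided by the inductive hypothesis) and isomorphism $\eta \gamma$.

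The remaining task is to verify that $\tilde\omega$ satisfies condition \eqref{cond} for the triple $(g_{n-1} f_{n-1}, \eta\gamma)$. Applying $\delta$ to $\tilde\omega$ and using the naturality $\delta \circ \gamma_*^{-1} = \gamma_*^{-1} \circ \delta$ of the Eilenberg-MacLane fibration in the coefficient group, together with the hypotheses \eqref{cond} already satisfied by $\omega$ and $\omega'$, the two $\gamma_*^{-1} f_{n-1}^* \kappa_{n-1}^Y$ contributions cancel and one arrives at
\[\delta \tilde\omega = \ev^{-1}\bigl((\eta\gamma)_*^{-1} (g_{n-1} f_{n-1})^* \kappa_{n-1}^Z - \kappa_{n-1}^X\bigr),\]
as required.

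For (2), I first invoke Theorem \ref{suf} inductively to conclude that every $f_n \in \isoef(X_n, Y_n)$ is a simplicial weak homotopy equivalence; since $X_n$ and $Y_n$ are minimal Kan complexes, Theorem 2.20 of \cite{C} promotes $f_n$ to an isomorphism of simplicial sets, so a genuine simplicial inverse $f_n^{-1}$ exists. Solving the equation $(u, v) = (f_{n-1}(x), \gamma_*(y + \omega(x)))$ for $(x, y)$ yields
\[f_n^{-1}(u, v) = \bigl(f_{n-1}^{-1}(u),\ (\gamma^{-1})_*(v + \tilde\omega(u))\bigr), \qquad \tilde\omega := -\gamma_* \circ \omega \circ f_{n-1}^{-1},\]
which is of the form \eqref{form} with lower stage $f_{n-1}^{-1} \in \isoef(Y_{n-1}, X_{n-1})$ (by induction) and isomorphism $\gamma^{-1}$. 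A computation entirely analogous to the composition case verifies \eqref{cond} for this $\tilde\omega$.

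Once (1) and (2) are in place, the group structure on $\autef(X_n)$ is immediate: composition is associative, the identity $\id_{X_n}$ lies in $\autef(X_n)$ (take $\gamma = \id$ and $\omega = 0$, for which \eqref{cond} is trivial), and inverses are supplied by (2). The main delicate point throughout is managing the interaction of the change-of-coefficients operator $\gamma_*$ with pullbacks, with the differential $\delta$, and with addition in the groups $E(\pi_n, n)$; this is pure naturality of the Eilenberg-MacLane constructions, but tracking its uses carefully is what makes the algebra close up and is the only nontrivial ingredient.
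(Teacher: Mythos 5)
Your proposal is correct and follows essentially the same route as the paper: writing the maps in the pullback coordinates of the form \eqref{form}, computing the composition to obtain $\Gamma=\eta\gamma$ and $\Omega=\omega+\gamma_*^{-1}\circ\omega'\circ f_{n-1}$ (identical to the paper's formulas), writing down the same explicit inverse, and checking condition \eqref{cond} via naturality of $\delta$ and $\gamma_*$. The only cosmetic differences are that you make the induction and the identity element explicit, and you assert rather than carry out the \eqref{cond} verification for the inverse, which the paper writes out; both are routine.
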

\smallskip

\begin{proof}
 Consider the identifications $X_{n}\subseteq X_{n-1}\times E(\pi_{n}^X,n)$, $Y_{n}\subseteq Y_{n-1}\times E(\pi_{n}^Y,n)$ and $Z_{n}\subseteq Z_{n-1}\times E(\pi_{n}^Z,n)$ and write $f$ and $g$ explicitly as:
\begin{align*}
    f(x,y)&=(f_{n-1}(x),\gamma_*(y+\omega(x))),\\
     g(x',y')&=(g_{n-1}(x'),\gamma'_*(y'+\omega'(x'))),
\end{align*}
The composition $g\circ f$ is 
\begin{align*}
g\circ f(x,y)&=((g_{n-1}\circ f_{n-1})(x),(\gamma'\gamma)_*(y+\omega(x))+\gamma'_*(\omega'(f_{n-1}(x))))\\
&=\Big[g_{n-1}\circ f_{n-1}(x),(\gamma'\gamma)_*\Big(y+\big(\omega(x)+\gamma_*^{-1}\omega'f_{n-1}(x)\big)\Big)\Big]
\end{align*}
which has the form \eqref{form}:
\[g\circ f(x,y)=\Big[h_{n-1}(x), \Gamma_*\big(y+\Omega(x)\big)\Big]
\]
where 
\begin{gather*}
\Gamma=\gamma'\circ\gamma,\quad \Omega=\omega+\gamma_*^{-1}\circ\omega'\circ f_{n-1}.
\end{gather*}
Since \eqref{cond} holds for $f$ and $g$, we can show that it holds for the composition as well:
\begin{align*}
\ev^{-1}(h_{n-1}^*\kappa_{n-1}^Z-\Gamma_*\kappa_{n-1}^X)&=f_{n-1}^*\ev^{-1}(g_{n-1}^*\kappa_{n-1}^Z)-
\gamma'_*\ev^{-1}(\gamma_*\kappa_{n-1}^X)\\
&=\gamma'\ev^{-1}(f_{n-1}^*\kappa_{n-1}^Y-\gamma_*\kappa_{n-1}^X)+f_{n-1}^*\ev^{-1}(g_{n-1}^*\kappa_{n-1}^Z-\gamma'_*\kappa_{n-1}^Y)\\
 &=(\gamma'\gamma)_*\delta(\omega)+f_{n-1}^*\gamma'_*\delta(\omega')\\
 &=\Gamma_*\delta(\Omega).
\end{align*}

One can be convinced that the inverse map to $f$ is 
\[f^{-1}(x',y')=(f_{n-1}^{-1}(x'),\gamma_*^{-1}(y'-(f^{-1}_{n-1})^*(\gamma_*\omega(x')))).\]
Again, it can be shown that \eqref{cond} holds for $f^{-1}$ :
\begin{align*}
\ev^{-1}((f_{n-1}^{-1})^*\kappa_{n-1}^X-\gamma^{-1}_*\kappa_{n-1}^Y)
&=(f_{n-1}^{-1})^*\gamma^{-1}_*\ev^{-1}(\gamma_*\kappa_{n-1}^X-f_{n-1}^*\kappa_{n-1}^Y)\\
&=\gamma_*^{-1}\delta((f^{-1}_{n-1})^*(-\gamma_*\omega )).
\end{align*}
\end{proof}

We have seen that Theorem \ref{suf} can be easily reformulated in terms of effective homotopy equivalences. We will do the same with Theorem \ref{nec}. It turns out that if we want to have the necessary condition \eqref{eq} for effective homotopy equivalences, we naturally relax from the strict commutativity in diagram \eqref{d1}. More details are available in the subsequent remark.

\begin{thm}\label{nes2}
Let $X$ and $Y$ be simply connected simplicial sets with effective homology. Let $\{X_{n}, k_{n}^X,\varphi_{n}^X\}$ and $\{Y_{n},
k_{n}^Y,\varphi_{n}^Y \}$ be their effective Postnikov towers with Postnikov cocycles $\kappa_{n-1}^X$, $\kappa_{n-1}^Y$,
respectively. Let $X':=\varprojlim X_n$ and  $Y':=\varprojlim Y_n$. If there is a simplicial weak homotopy equivalence $f\colon X'\to Y'$, then there are effective homotopy equivalences $f_{n}\in\isoef(X_{n}, Y_{n})$ of the Postnikov stages such that the diagrams
\begin{equation}
\begin{tikzpicture}
\matrix (m) [matrix of math nodes, row sep=2em,
column sep=2em, minimum width=2em]
{  X' & Y'  \\
  X_{n} & Y_{n} \\};
  \begin{scope}[every node/.style={scale=.8}]
\path[->](m-1-1) edge node[above] {$f$} (m-1-2);
\path[->](m-1-2) edge node[right] {$\varphi^{Y'}_{n}$} (m-2-2);
\path[->](m-1-1) edge node[left] {$\varphi^{X'}_{n}$} (m-2-1);
\path[->](m-2-1) edge node[below] {$f_{n}$} (m-2-2);
\end{scope}
\end{tikzpicture}
\label{d1}
\end{equation}
commute up to homotopy and the diagrams 

\begin{equation}\label{ee}
\begin{tikzpicture}
\matrix (m) [matrix of math nodes, row sep=2em,
column sep=2em, minimum width=2em]
{  X_{n} & Y_{n}  \\
  X_{n-1} & Y_{n-1} \\};
  \begin{scope}[every node/.style={scale=.8}]
\path[->](m-1-1) edge node[above] {$f_{n}$} (m-1-2);
\path[->>](m-1-2) edge node[right] {$p_{n}^Y$} (m-2-2);
\path[->>](m-1-1) edge node[left] {$p_{n}^X$} (m-2-1);
\path[->](m-2-1) edge node[below] {$f_{n-1}$} (m-2-2);
\end{scope}
\end{tikzpicture}
\end{equation}
commute strictly. 

Furthermore, there is an isomorphism $\gamma\colon \pi_{n}^X\to \pi_{n}^Y$ such that the Postnikov classes  satisfy the relation \eqref{eq}
\begin{equation*}
\gamma_*[{\kappa}_{n-1}^{X}]=f_{n-1}^*[\kappa_{n-1}^Y].
\end{equation*}
\end{thm}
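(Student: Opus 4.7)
The plan is to reduce to Theorem \ref{nec} and inductively replace the (possibly non-effective) comparison maps produced there by effective ones that are homotopic to them. Applying Theorem \ref{nec} to the given simplicial weak homotopy equivalence $f\colon X'\to Y'$ supplies maps $g_n\colon X_n\to Y_n$ making both squares \eqref{ee} and \eqref{d1} commute strictly, together with an isomorphism $\gamma\colon\pi_n^X\to\pi_n^Y$ satisfying relation \eqref{eq}. These $g_n$ need not be effective, so my goal is to build $f_n\in\isoef(X_n,Y_n)$ inductively with $f_n\simeq g_n$ at every stage. Once this is achieved, \eqref{d1} commutes up to homotopy via $f_n\circ\varphi_n^{X'}\simeq g_n\circ\varphi_n^{X'}=\varphi_n^{Y'}\circ f$, the strict commutativity of \eqref{ee} is built into the effective-lifting procedure of Theorem \ref{suf}, and relation \eqref{eq} for $f_{n-1}$ follows from the same relation for $g_{n-1}$ because cohomology pullback depends only on the homotopy class.

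The induction begins with $f_1=g_1=\id\colon \ast\to\ast$. For the inductive step, assume $f_{n-1}\in\isoef(X_{n-1},Y_{n-1})$ with $f_{n-1}\simeq g_{n-1}$. The homotopy, combined with \eqref{eq} for $g_{n-1}$, yields $\gamma_*[\kappa_{n-1}^X]=f_{n-1}^*[\kappa_{n-1}^Y]$, so Theorem \ref{suf} produces a strict effective lift $\tilde f_n\in\isoef(X_n,Y_n)$ of $f_{n-1}$. Both $\tilde f_n$ and $g_n$ sit over $f_{n-1}$ up to homotopy (strictly for $\tilde f_n$, via $f_{n-1}\simeq g_{n-1}$ for $g_n$), so Addendum \ref{ap} gives $\tilde f_n\simeq g_n+\bar d$ for some $\bar d\colon X_n\to K(\pi_n^Y,n)$. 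To upgrade to the sharper form $\bar d\simeq\gamma_*\circ c\circ p_n^X$, I verify the fibre-homotopy hypothesis of the addendum: the restrictions $\tilde f_n\circ\iota^X$ and $g_n\circ\iota^X$ are both maps $K(\pi_n^X,n)\to K(\pi_n^Y,n)$ inducing $\gamma$ on $\pi_n$, and two such maps between Eilenberg-MacLane spaces of the same dimension are homotopic by obstruction theory. Replacing the cochain $\omega$ in the representation of $\tilde f_n$ by $\omega-c$ preserves condition \eqref{cond} (since $\delta c=0$) and keeps the map in $\isoef(X_n,Y_n)$; the resulting $f_n$ then satisfies $f_n\simeq\tilde f_n-\gamma_*\circ c\circ p_n^X\simeq g_n$, closing the induction.

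The main subtlety I foresee is the fibre-homotopy check: one must arrange compatible basepoints so that $\tilde f_n\circ\iota^X$ and $g_n\circ\iota^X$ land in the same fibre of $p_n^Y$. Choosing $x_0\in X_{n-1}$ with $k_{n-1}^X(x_0)=0$, the formula \eqref{form} gives the fibre map $y\mapsto\gamma_*(y)+\gamma_*\omega(x_0)$; because $E(\pi_n^X,n)_0=C^n(\Delta^0,\pi_n^X)=0$ for $n\ge 1$, the correction $\omega(x_0)$ vanishes and the fibre map is exactly the simplicial homomorphism $\gamma_*$. After adjusting the target fibre by the homotopy $f_{n-1}(x_0)\simeq g_{n-1}(x_0)$, the fibre map of $g_n$ is likewise a map $K(\pi_n^X,n)\to K(\pi_n^Y,n)$ inducing $\gamma$ on $\pi_n$, and the obstruction-theoretic comparison above then applies.
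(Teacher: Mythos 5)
Your proposal is correct and follows essentially the same route as the paper's proof: invoke Theorem \ref{nec} to obtain (possibly non-effective) comparison maps, then inductively produce effective lifts via Theorem \ref{suf}, check that the two lifts agree on fibres because both fibre restrictions induce the same isomorphism $\gamma$ on $\pi_n$, and use the second part of Addendum \ref{ap} to correct the effective lift by a map $c\colon X_{n-1}\to K(\pi_n^X,n)$ so that it becomes homotopic to the given one while staying in $\isoef(X_n,Y_n)$. Your extra care with basepoints and the vanishing of $\omega$ on the fibre is a harmless elaboration of a point the paper treats by identifying both fibre homomorphisms with the $\gamma$ from condition \eqref{eq}.
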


\begin{proof}
In the proof of Theorem \ref{nec} we have shown the existence of (not necessary effective) simplicial weak homotopy equivalences $f'_n\colon X_n\to Y_n$ between Postnikov towers $\{X_{n}\}$ and $\{Y_{n}\}$ together with isomorphisms $\gamma\colon\pi_n^X\to\pi_n^Y$ such that 
\begin{equation}
\gamma_*([\kappa_{n-1}^X])=(f'_{n-1})^*([\kappa_{n-1}^Y])\quad \text{ in } H^{n+1}(X_{n-1};\pi_n^Y).  \label{CE1}  
\end{equation}
The maps $f'_n$ make diagrams \eqref{d1} and \eqref{ee} commutative.

\medskip
By induction, we prove the existence of maps $f_{n}\sim f_n'$ with required properties.
Since the first stages of Postnikov towers are points, $f_1=f_1'$ is the trivial map between points. Now, assume that we have constructed maps $f_m$ for $m\leq n-1$.

Since $f_{n-1}\sim f_{n-1}'$, equation \eqref{CE1} holds for $f_{n-1}$ as well and it enables us to use Theorem \ref{suf} to construct a lift $g_n$ of $f_{n-1}$
of the form \eqref{form}. Let us recall that $f'_n$ is  the lift of $f_{n-1}'$. Denote by 
$\bar g_n,\, \bar f_n'\colon K(\pi_n^X,n)\to K(\pi_n^Y,n)$ the restrictions of $g_n$ and $f_n'$ to the fibres, respectively. Since $\Hom(\pi_n^X,\pi_n^Y)\cong[K(\pi_n^X,n),K(\pi_n^Y,n)]$, homotopy classes of $\bar g_n$ and $\bar f_n'$ are determined by homomorphisms $\pi_n^X\to \pi_n^Y$. According to the proof of Theorem \ref{nec} these homomorphisms are the same as homomorphisms from condition \eqref{eq}
for $f_{n-1}$ and $f_{n-1}'$, respectively. However these are the same, equal to the isomorphism $\gamma$. Consequently, $[\bar g_n]=[\bar f_n']$ which means that the maps
\[\iota^Y\circ\bar{g_n} = g_n\circ\iota^X,\, \iota^Y\circ\bar{f_n'}  = f_n'\circ\iota^X\colon K(\pi_n^X,n)\to \iota^Y (K(\pi_n^Y,n))\]
are homotopic.

In this situation we can apply Addendum \ref{ap} to get  that $f'_n\sim g_{n}+\gamma_*\circ c\circ p_n^X$ for a map $c\colon X_{n-1}\to K(\pi_n^X,n)$. Thus define a new map $f_{n}=g_{n}+\gamma_*\circ c\circ p_n^X$ which is clearly the lift of $f_{n-1}$, is an element of $\isoef(X_n,Y_n)$, has the property $f_{n}\sim f'_n$ and so makes \eqref{d1} commutative up to homotopy.
\end{proof}

\begin{rem}
The new version of the necessary condition can make a weaker impression as one diagram commutes only up to the homotopy. However, algorithms typically work with finite simplicial complexes of dimension $d$, and the input map $f\colon X'\to Y'$ represents a certain homotopy class. Thus, diagram (\ref{d1}) can be made strictly commutative by replacing $f$ with another representative of the same homotopy class using the homotopy lifting property of the fibration $\varphi^{Y'}_{d}$. Such a replacement $H(-,1)$ is a lift in the next diagram with $H'$ to be a homotopy between $\varphi^{Y'}_{d}f$ and $f_{d}\varphi^{X'}_{d}$.
\begin{equation*}
\begin{tikzpicture}
\matrix (m) [matrix of math nodes, row sep=2em,
column sep=2em, minimum width=2em]
{ X'\times\{0\}   & Y'  \\
  X'\times\Delta^1  & Y_{d} \\};
  \begin{scope}[every node/.style={scale=.8}]
\path[->,dashed](m-2-1) edge node[above] {$H$} (m-1-2);
\path[->>](m-1-2) edge node[right] {$\varphi^{Y'}_{d}$} (m-2-2);
\path[->](m-1-1) edge node[above] {$f$} (m-1-2);
\path[>->](m-1-1) edge (m-2-1);
\path[->](m-2-1) edge node[below] {$H'$} (m-2-2);
\end{scope}
\end{tikzpicture}
\end{equation*}
\end{rem}

The second important property of effective homotopy equivalences is that they can approximate every simplicial weak homotopy equivalence up to homotopy.

\begin{prop}\label{iso}
Let $X$ and $Y$ be simply connected simplicial sets with effective homology. Let $\{X_{n}, k_{n}^X, \varphi_{n}^X\}$ and 
$\{Y_{n}, k_{n}^Y,\varphi_{n}^Y\}$  be their effective Postnikov towers. 
Every simplicial weak homotopy equivalence $g_n:X_{n}\to Y_{n}$ is homotopic to an effective homotopy equivalence $f_{n}\in\isoef(X_{n},Y_{n})$. So 
\[\Iso(X_{n},Y_{n})=\Isoef(X_{n},Y_{n}).\]
\end{prop}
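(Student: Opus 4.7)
The plan is to proceed by induction on $n$, using the same machinery that powered Theorems \ref{nec} and \ref{suf}. For $n=1$ both $X_1$ and $Y_1$ are points and the inclusion into $\isoef$ is tautological, so attention focuses on the inductive step.

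Assume the claim holds through stage $n-1$ and let $g_n\colon X_n\to Y_n$ be a simplicial weak homotopy equivalence. First I would descend $g_n$ one level. Since $X_n$ and $Y_n$ are minimal Kan complexes and the standard Postnikov tower agrees with the Moore-Postnikov tower by Proposition \ref{TGJ1}, functoriality of the latter produces a weak equivalence $g_{n-1}\colon X_{n-1}\to Y_{n-1}$ with $p_n^Y\circ g_n=g_{n-1}\circ p_n^X$. By induction there is $f_{n-1}\in\isoef(X_{n-1},Y_{n-1})$ with $f_{n-1}\sim g_{n-1}$. Applying the argument of Theorem \ref{nec} to $g_n$ yields an isomorphism $\gamma\colon\pi_n^X\to\pi_n^Y$ with $\gamma_*[\kappa_{n-1}^X]=g_{n-1}^*[\kappa_{n-1}^Y]$; because $f_{n-1}\sim g_{n-1}$ induce the same map on cohomology, the same identity reads $\gamma_*[\kappa_{n-1}^X]=f_{n-1}^*[\kappa_{n-1}^Y]$. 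The hypotheses of Theorem \ref{suf} are therefore met, and it produces $\tilde f_n\in\isoef(X_n,Y_n)$ strictly lifting $f_{n-1}$.

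The remaining task is to modify $\tilde f_n$ within $\isoef(X_n,Y_n)$ so that it becomes homotopic to $g_n$. From $p_n^Y\circ g_n=g_{n-1}\circ p_n^X\sim f_{n-1}\circ p_n^X=p_n^Y\circ\tilde f_n$, the two candidates agree after projection to $Y_{n-1}$ up to homotopy. To invoke the sharper conclusion of Addendum \ref{ap}, I still need the fibre restrictions $g_n\circ\iota^X$ and $\tilde f_n\circ\iota^X$ to represent the same homotopy class of maps $K(\pi_n^X,n)\to \iota^Y(K(\pi_n^Y,n))$. Those classes are detected by the induced homomorphisms on $\pi_n$: the homomorphism induced by $\tilde f_n$ is $\gamma$ by construction of the formula \eqref{form}, and the homomorphism induced by $g_n$ is $\gamma$ by the very definition of $\gamma$ given inside the proof of Theorem \ref{nec}. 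Thus the fibre restrictions are homotopic, and Addendum \ref{ap} delivers $c\colon X_{n-1}\to K(\pi_n^X,n)$ with $g_n\sim \tilde f_n+\gamma_*\circ c\circ p_n^X$.

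Finally, I would absorb $c$ into the lift parameter $\omega$ of Theorem \ref{suf}. Part (4) of that theorem asserts precisely that replacing $\omega$ by $\omega+c$ (via the inclusion $K(\pi_n^X,n)\hookrightarrow E(\pi_n^X,n)$) still satisfies condition \eqref{cond}, yielding a new element $f_n\in\isoef(X_n,Y_n)$. A direct inspection of \eqref{form} shows that this modification adds $\gamma_*\circ c\circ p_n^X$ to $\tilde f_n$ in the fibre coordinate, so $f_n=\tilde f_n+\gamma_*\circ c\circ p_n^X\sim g_n$, closing the induction and giving $\Iso(X_n,Y_n)=\Isoef(X_n,Y_n)$. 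The subtlest step I expect is the fibre comparison needed to trigger the refined second half of Addendum \ref{ap}; once the coincidence of induced $\pi_n$-homomorphisms is traced through the definition of $\gamma$ in Theorem \ref{nec}, the rest is a formal manipulation of the model \eqref{form}.
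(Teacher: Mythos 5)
Your argument is correct and is essentially the paper's proof, just unfolded: the paper disposes of this in two lines by applying Theorem \ref{nes2} with $X'=X_n$, $Y'=Y_n$, $\varphi_n^{X'}=\varphi_n^{Y'}=\id$ and $f=g_n$, and your inductive step (descend $g_n$ via Moore--Postnikov functoriality, replace $g_{n-1}$ by an effective $f_{n-1}$, lift by Theorem \ref{suf}, match the fibre restrictions through $\gamma$, and correct by the map $c$ from Addendum \ref{ap}) is exactly the induction carried out inside the proof of that theorem. So the route is the same; you have re-derived the special case rather than citing it.
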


\begin{proof}
We will apply Theorem \ref{nes2}. Put  $X'=X_{n}$ and $Y'=Y_{n}$, together with $\varphi_{n}^{X'}=\id$ and $\varphi_{n}^{Y'}=\id$, and $f=g_n:X_n\to Y_n$. Then according Theorem \ref{nes2} there is a an effective homotopy equivalence $f_n:X_{n}\to Y_{n}$ such that  $\varphi_{n}^{Y'}f_n\sim g_n\varphi_{n}^{X'}$ i.e. $f_n\sim g_n$. 

\end{proof}

The next corollary rewrites the necessary and sufficient condition \eqref{eq} using the parame\-tri\-za\-tion of a base map $f_{n-1}$  via self-homotopy equivalences from $\autef(X_{n-1})$ and isomorphisms $\pi_n^X\to \pi_n^Y$. 
 
\begin{cor}\label{cor}
Let $X$ and $Y$ be simply connected simplicial sets with effective homology. Let $\{X_{n}, k_{n}^X, \varphi_{n}^X\}$ and 
$\{Y_{n}, k_{n}^Y,\varphi_{n}^Y\}$ be their effective Postnikov towers with Postnikov cocycles $\kappa_{n}^X$ and $\kappa_{n}^Y$, respectively. Assume that there is a simplicial weak homotopy equivalence $g_{n-1}\colon X_{n-1}\to Y_{n-1}$. Then, $X_{n}$ and $Y_{n}$ are simplicially weak homotopy equivalent if and only if there is an isomorphism $\gamma\colon \pi_{n}^X\to\pi_{n}^Y$ and a simplicial weak homotopy selfequivalence $a_{n-1}\colon X_{n-1}\to X_{n-1}$ such that
\begin{equation}
\gamma_*[{\kappa}_{n-1}^{X}]=(g_{n-1}a_{n-1})^*[\kappa_{n-1}^Y].\label{eqq}
\end{equation}
Moreover, if we assume that $g_{n-1}\in\isoef(X_{n-1},Y_{n-1})$ then $a_{n-1}\in\autef(X_{n-1})$.
\end{cor}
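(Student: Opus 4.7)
The plan is to derive both directions from machinery already in hand: the sufficient direction from Theorem \ref{suf}, the necessary direction from Theorem \ref{nec}, and the effective refinement from Propositions \ref{isoef} and \ref{iso}.

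For the backward implication I would set $f_{n-1} := g_{n-1}\circ a_{n-1}$, which is a simplicial weak homotopy equivalence as a composition of such. Rewriting $(g_{n-1}a_{n-1})^*[\kappa_{n-1}^Y]$ as $f_{n-1}^*[\kappa_{n-1}^Y]$, the hypothesis \eqref{eqq} becomes precisely condition \eqref{eq} of Theorem \ref{suf} applied to the pair $(f_{n-1},\gamma)$. Theorem \ref{suf} then produces a lift $f_n\colon X_n\to Y_n$, and its final clause guarantees that $f_n$ is itself a simplicial weak homotopy equivalence (in fact an isomorphism), giving the required equivalence between $X_n$ and $Y_n$.

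For the forward implication, suppose $f_n\colon X_n\to Y_n$ is a simplicial weak homotopy equivalence. Viewing $X_n$ and $Y_n$ as simply connected simplicial sets in their own right, whose effective Postnikov towers are the truncations at level $n$ of the towers of $X$ and $Y$, I would apply Theorem \ref{nec} to $f_n$ to obtain a simplicial weak homotopy equivalence $f_{n-1}\colon X_{n-1}\to Y_{n-1}$ and an isomorphism $\gamma\colon \pi_n^X\to\pi_n^Y$ satisfying $\gamma_*[\kappa_{n-1}^X]=f_{n-1}^*[\kappa_{n-1}^Y]$. Because $X_{n-1}$ and $Y_{n-1}$ are minimal Kan complexes, the given $g_{n-1}$ is actually a simplicial isomorphism (Theorem 2.20 of \cite{C}), so I can set $a_{n-1} := g_{n-1}^{-1}\circ f_{n-1}$; then $f_{n-1}=g_{n-1}a_{n-1}$ and relation \eqref{eqq} follows immediately.

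To obtain the \emph{moreover} assertion, assume $g_{n-1}\in\isoef(X_{n-1},Y_{n-1})$, so that $g_{n-1}^{-1}\in\isoef(Y_{n-1},X_{n-1})$ by Proposition \ref{isoef}(2). Proposition \ref{iso} lets me replace $f_{n-1}$ by a homotopic effective equivalence $f_{n-1}'\in\isoef(X_{n-1},Y_{n-1})$; then $a_{n-1}':=g_{n-1}^{-1}\circ f_{n-1}'$ lies in $\autef(X_{n-1})$ by Proposition \ref{isoef}(1). Since $(g_{n-1}a_{n-1}')^*=(f_{n-1}')^*=f_{n-1}^*$ on cohomology (the pullback is homotopy invariant), \eqref{eqq} is preserved. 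The main conceptual care lies in the descent step $f_n\mapsto f_{n-1}$ of the forward direction, namely recognising that Theorem \ref{nec} applies to the truncated Postnikov towers; once this is granted, everything else is bookkeeping about compositions and inverses of effective equivalences.
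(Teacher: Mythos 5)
Your proof is correct and follows essentially the same route as the paper: sufficiency via Theorem \ref{suf} applied to $f_{n-1}=g_{n-1}\circ a_{n-1}$, necessity by descending from a weak equivalence $X_n\to Y_n$ to one on the $(n-1)$-stages satisfying \eqref{eq}, and then $a_{n-1}=g_{n-1}^{-1}\circ f_{n-1}$ with Propositions \ref{isoef} and \ref{iso} handling the effective refinement. The only cosmetic difference is that the paper obtains $f_{n-1}$ by first replacing $f_n$ with an effective equivalence via Proposition \ref{iso} (whose proof is exactly the application of the necessary-condition theorem to the truncated towers that you carry out by hand), whereas you invoke Theorem \ref{nec} directly and only pass to an effective representative at level $n-1$; both orderings rest on the same machinery.
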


\begin{proof}
Theorem \ref{suf} says that condition \eqref{eqq} is sufficient for an existence of a simplicial weak homotopy equivalence which is a lift of $g_{n-1}\circ a_{n-1}$. If $g_{n-1}$ and $a_{n-1}$ are effective then their composition is effective by Proposition \ref{isoef}.  So the lift is also effective. 

We will show that the condition is necessary. Suppose that $X_n$ and $Y_n$ are simplicially weak homotopy equivalent through a simplicial weak homotopy equivalence $g\colon X_n\to Y_n$. Then, Proposition \ref{iso} provides an effective homotopy equivalence $f_n\in \isoef(X_n,Y_n)$ homotopic to $g$. Since $f_n$ is effective, it is has to be a lift of an effective homotopy equivalence $f_{n-1}\in \isoef(X_{n-1},Y_{n-1})$.
Moreover, there is an isomorphism $\gamma\colon\pi_n^X\to\pi_n^Y$ such that
\begin{equation*}
\gamma_*[{\kappa}_{(n-1)*}^{X}]=(f_{n-1})^*[\kappa_{(n-1)}^Y].
\end{equation*}
Now, it is enough to take $a_{n-1}=g^{-1}_{n-1}\circ f_{n-1}$ to get (\ref{eqq}). 
If $g_{n-1}\in\isoef(X_{n-1},Y_{n-1})$ then $a_{n-1}=g^{-1}_{n-1}\circ f_{n-1}\in\autef(X_{n-1})$ by Proposition \ref{isoef}.

\end{proof}

We will examine the groups $\Autef(X_{n})$ in the following sections. It turns out that they are finitely generated and that there are algorithms that can find their generators inductively. 

\section{Algorithmic group theory}
This section introduces two algorithms for computing orbits and stabilizers of group actions. The aim of this excursion into algorithmic group theory is to give an algorithm that, from the knowledge of   $\Autef(X_{n-1})$ computes  $\Autef(X_{n})$, see Proposition \ref{PF}.

\begin{defn}
Let $G$ be a group. The \emph{right action} of the group $G$ on a set $M$ is a map
\[ R\colon M\times G\to M\]
satisfying $R(m,1)=m$ and $R(m, g\cdot h)=R(R(m,g),h)$ for the unit $1\in G$ and all $m\in M$ and $g,h\in G$.
For simplicity, we will denote the value $R(m,g)\in M$ as $m^g$. The \emph{orbit} of $m\in M$ under $G$ is a subset 
\[m^G:=\{m^g|\,g\in G\}\subseteq M.\]
The \emph{stabilizer} subgroup of $m\in M$ under this action is a subgroup
\[\operatorname{Stab}_G(m):=\{g\in G|\,m^g=m\}\subseteq G.\]
The action is \emph{transitive} if $m^G=M$ for all $m\in M$.
For a subgroup $S$ of $G$, a subset $T\subseteq G$ is called a \emph{right transversal} of $S$ if $T$ contains exactly one element of each coset from 
\[G/S=\{Sg\subseteq G;\ g\in G\}.\] 
\end{defn}

\begin{exmp}
We will use these notions in the following context. Take a Postnikov tower $\{X_{n}\}$ for a finite simplicial set $X$. It can be easily verified that the map
\begin{align*}
\Tor(H^{n+1}(X_{n-1};\pi_n))\times\big(\Autef(X_{n-1})\times\Aut(\pi_n)\big)&\to \Tor(H^{n+1}(X_{n-1};\pi_n))\\
([\kappa],[a_{n-1}],\gamma)&\longmapsto \gamma^{-1}_*a_{n-1}^*[\kappa]
\end{align*}
is a right action of the group $G=\Autef(X_{n-1})\times\Aut(\pi_n)$ 
on the finite set 
\[M=\Tor(H^{n+1}(X_{n-1};\pi_n)).\]
\end{exmp}

\begin{lem}[Orbit-Stabilizer]
Let $G$ acts on a set $M$, $m\in M$ and let $S:=\operatorname{Stab}_G(m)$. The map
\[  G/S\longrightarrow m^G:\ Sg\longmapsto m^g \]
 is a well-defined bijection. 
\end{lem}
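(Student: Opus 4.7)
The plan is to verify the three usual properties of this map in turn: well-definedness, surjectivity, and injectivity, all of which reduce to straightforward manipulations with the right-action axioms $m^1=m$ and $m^{gh}=(m^g)^h$.

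First I would check that the assignment $Sg \mapsto m^g$ does not depend on the chosen coset representative. If $Sg = Sg'$, then $g' = sg$ for some $s \in S = \operatorname{Stab}_G(m)$, and hence
\[ m^{g'} = m^{sg} = (m^s)^g = m^g, \]
so the map $G/S \to m^G$ is well defined. Surjectivity is immediate from the definition $m^G = \{m^g \mid g \in G\}$: every element of $m^G$ is, by definition, the image of the coset $Sg$ for some $g$.

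For injectivity, suppose $m^g = m^{g'}$. Acting on the right by $g^{-1}$ and using the action axioms gives
\[ m = m^{gg^{-1}} = (m^g)^{g^{-1}} = (m^{g'})^{g^{-1}} = m^{g'g^{-1}}, \]
so $g'g^{-1} \in S$, which means $g' \in Sg$, i.e.\ $Sg = Sg'$. Combining the three steps yields the desired bijection.

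There is no genuine obstacle here; this is a textbook argument, and the only mild care needed is to keep track of the convention that $G$ acts on $M$ on the right, so the cosets appearing are right cosets $Sg$ rather than left cosets $gS$. Every step is a one-line application of the axioms defining a right action together with the definition of the stabilizer.
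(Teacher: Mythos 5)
Your proof is correct and complete; the paper itself does not spell out an argument but simply cites \cite[Theorem 2.16]{HEB}, and your three-step verification (well-definedness, surjectivity, injectivity via the right-action axioms) is exactly the standard argument that reference contains. You are also right to note the right-coset convention, which matches the paper's setup.
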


\begin{proof}
See \cite[Theorem 2.16]{HEB}.
\end{proof}

The next lemma constitutes a way to construct a presentation of a subgroup. Consider a subgroup $S$ of a group $G$ and a right transversal $T$ of $S$. The uniquely determined element in $T\cap Sg$ will be denoted $\overline{g}$.
\begin{lem}[Schreier's lemma]\label{SL}
Let $G$ be a group generated by elements from a set $P$, $G=\langle P\rangle$. Let  $S$ be a subgroup of $G$ with a finite right transversal  $T$. Then the subgroup $S$ is generated by
the set
\begin{equation}\label{SG}
S\cap\{rp(\overline{rp})^{-1}\in G;\ r\in T,\, p\in P, rp\ne\overline{rp} \}.
\end{equation}
\end{lem}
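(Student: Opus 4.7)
The strategy is the classical \emph{rewriting argument}. Denote the set in (\ref{SG}) by $\Sigma$ and set $H:=\langle\Sigma\rangle\subseteq S$; the goal is $H=S$. After possibly replacing the representative of the trivial coset in $T$, I assume $1\in T$, so that $\overline{g}=1$ iff $g\in S$. The key computational identity is that for every $r\in T$ and every $p\in P$ the element $rp$ lies in the coset $S\overline{rp}$, giving
\[rp = u(r,p)\cdot \overline{rp},\qquad u(r,p):=rp\cdot\overline{rp}^{-1}\in S,\]
where $u(r,p)$ is either trivial (when $rp=\overline{rp}$) or belongs to $\Sigma$.

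Given $s\in S$, write it as a word $s=p_1^{\epsilon_1}\cdots p_n^{\epsilon_n}$ with $p_i\in P$ and $\epsilon_i\in\{\pm 1\}$, put $r_0:=1$, and define inductively $r_i:=\overline{r_{i-1}p_i^{\epsilon_i}}$. I plan to prove by induction on $i$ the rewriting formula
\[p_1^{\epsilon_1}\cdots p_i^{\epsilon_i} = s_1s_2\cdots s_i\cdot r_i,\qquad s_j\in H.\]
For $\epsilon_j=+1$ this is the displayed identity with $s_j=u(r_{j-1},p_j)$. For $\epsilon_j=-1$, a short direct calculation shows $\overline{r_j\,p_j}=r_{j-1}$ (use that $r_{j-1}p_j^{-1}\in S r_j$ implies $r_j p_j\in S r_{j-1}$ and that $r_{j-1}\in T$), hence $u(r_j,p_j)=r_j p_j r_{j-1}^{-1}$ and therefore $r_{j-1}p_j^{-1}=u(r_j,p_j)^{-1}\cdot r_j$, so we may take $s_j:=u(r_j,p_j)^{-1}\in H$.

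Setting $i=n$ yields $s=s_1\cdots s_n\cdot r_n$. Since $s$ and $s_1\cdots s_n$ both lie in $S$, so does $r_n$, which forces $r_n\in S\cap T=\{1\}$. Thus $s=s_1\cdots s_n\in H$, proving $S\subseteq H$; the reverse inclusion is automatic from the definition of $\Sigma$. The only delicate point is the handling of inverse letters $p^{-1}$, since $\Sigma$ itself only contains Schreier generators attached to letters $p\in P$; the second paragraph resolves this by showing that each letter $p^{-1}$ contributes precisely the inverse of some element of $\Sigma$, which automatically lies in $H$.
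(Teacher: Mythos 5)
The paper offers no proof of this lemma to compare against---it simply cites \cite{HEB}---so your rewriting argument stands on its own. It is the standard Schreier rewriting proof, and the core of it is sound: the induction, the treatment of inverse letters via the identity $\overline{r_jp_j}=r_{j-1}$, and the endgame $r_n\in S\cap T$ are all correct.

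The one step that does not hold up as written is the opening normalization ``after possibly replacing the representative of the trivial coset in $T$, I assume $1\in T$.'' The generating set \eqref{SG} is defined in terms of the \emph{given} transversal $T$; replacing the representative $t_0$ of the coset $S$ by $1$ changes the set $\Sigma$ (for example, for $r\in T$ and $p\in P$ with $rp\in S$ the Schreier element changes from $rp\,t_0^{-1}$ to $rp$), so what you end up proving is the lemma for the modified transversal, not for $T$. Since it is not yet known at that stage that $t_0\in\langle\Sigma\rangle$, the two statements cannot be interchanged without argument. The cheapest repair is to drop the normalization and run your induction starting from $r_0:=t_0$, the element of $T$ representing the coset $S$ (all the induction ever needs is $r_{i}\in T$, and $t_0\in T$): the rewriting identity becomes $t_0\,p_1^{\epsilon_1}\cdots p_i^{\epsilon_i}=s_1\cdots s_i\,r_i$ with $s_j\in H$, the endpoint satisfies $r_n\in S\cap T=\{t_0\}$, and you conclude $t_0\,s\,t_0^{-1}=s_1\cdots s_n\in H$ for every $s\in S$; since $t_0\in S$, conjugation by $t_0$ is an automorphism of $S$, whence $S=t_0St_0^{-1}\subseteq H$. (In the paper's only application of the lemma, the Orbit--Stabilizer algorithm, the transversal is built with $\log m=1$, so your special case would in fact suffice there; but the lemma as stated allows an arbitrary finite right transversal.)
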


\begin{proof}
See \cite[Theorem 2.57]{HEB}.
\end{proof}

As a direct consequence of these two assertions on stabilizers, we get.

\begin{cor}\label{FS}
Let a finitely generated group $G$ act on a finite set $M$, $m\in M$ and let $S:=\operatorname{Stab}_G(m)$. 
Then the subgroup $S$ is finitely generated, and its generators are listed in the set \eqref{SG}.
\end{cor}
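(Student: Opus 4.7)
The plan is to combine the two preceding lemmas directly, with the only preparatory observation being that the stabilizer has finite index. First, I would invoke the Orbit-Stabilizer lemma to identify the cosets $G/S$ with the orbit $m^G \subseteq M$. Since $M$ is finite, the orbit is finite, hence $[G:S] = |G/S| = |m^G| < \infty$. In particular $S$ admits a finite right transversal $T \subseteq G$; moreover, such a $T$ is obtained concretely by choosing, for each element $n \in m^G$, one group element $g_n \in G$ with $m^{g_n} = n$.

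Next, I would apply Schreier's lemma (Lemma~\ref{SL}) to the subgroup $S \leq G = \langle P \rangle$ with the finite transversal $T$ just produced. It yields that $S$ is generated by the set displayed in \eqref{SG}, namely
\[
 S \cap \{\, rp(\overline{rp})^{-1} \in G \ ;\ r\in T,\ p\in P,\ rp \ne \overline{rp}\,\}.
\]
Because $P$ is finite (as $G$ is finitely generated) and $T$ is finite (by the previous paragraph), this generating set is itself finite. This establishes both conclusions of the corollary simultaneously: the generating set \eqref{SG} is finite and generates $S$, so $S$ is finitely generated.

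I do not expect any serious obstacle here, since both ingredients are stated in the excerpt exactly in the form needed. The only step requiring a small remark is the finiteness of the transversal, which is an immediate consequence of Orbit-Stabilizer combined with $|M| < \infty$; this is the one place where the hypothesis that $M$ is finite is actually used, and it is what converts the purely algebraic Schreier presentation into an effective finiteness statement.
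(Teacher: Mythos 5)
Your argument is correct and coincides with the paper's own proof: both use the Orbit-Stabilizer lemma to conclude that $S$ has finite index (hence a finite right transversal, obtained from the finite orbit $m^G$), and then apply Schreier's lemma to produce the finite generating set \eqref{SG}. Nothing is missing.
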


\begin{proof}
The Orbit-Stabilizer theorem provides a map that identifies $S$ as a finite index subgroup of $G$. Thus, any right transversal of $S$ in $G$ is a finite set, and the generators of $S$ are enumerated by Schreier's lemma.
The list of generators is finite as both sets $P$ and $T$ are finite.
\end{proof}

The initial theory easily gives two algorithms. The first one computes the orbit $m^G$, and its extended version enhances the output by a list of generators of the stabilizer $\operatorname{Stab}_G(m)$. We start with a specification of input objects. 

\begin{con}
Let $G$ be a finitely-generated group defined by an explicit finite generating set $P$. Take a finite set $M$ endowed with a right $G$-action such that:
\begin{itemize}
    \item $m^g$ can be computed for all $m\in M$ and $g\in G$,
    \item one can decide whether two elements of $M$ are equal. 
\end{itemize}
\end{con}

If $G$ is a finitely-generated group and $M$ is a finite set, then any orbit can be computed by an exhaustive search.\\

\begin{algorithm}[H]
\caption{Orbit algorithm}\label{O}
\SetAlgoLined
\KwData{A finitely-generated group $G=\langle P\rangle$ with a right action on a finite set $M$, an element $m\in M$.}
\KwResult{Elements of the orbit $m^G$.}
List:=$\{m\}$\;

  \ForEach{ $y\in$ List }{
  \ForEach{$g\in P$}{
   \If{ $y^g\not\in$ List}{
      List.Append($y^g$)\;
   }
  }
  }

\Return List\;
\end{algorithm}
\smallskip

Now we want to find a list of generators of the stabilizer $S=\operatorname{Stab}_G(m)$. Since $G/S$ is in bijection with the orbit $m^G$, for every $y\in m^G$ we choose a single $g\in G$ such that $m^g=y$. For this choice, we will write $g=\log y$:
\[m^{\log y}=y.\]
The set
\[ T=\{\log y\in G;\ y\in m^G\}\]
is a finite right transversal to the stabilizer  $\operatorname{Stab}_G(m)$. For all $y\in m^G$ and all $g\in G$ can write 
\[m^{\log(y^g)}=y^g=(m^{\log y})^g=m^{(\log y)\cdot g}.\]
Consequently, comparing our previous and new notation we get
\[\overline{(\log y)\cdot g}=\log (y^g).\]
Hence, in the case of the stabilizer $\operatorname{Stab}_G(m)$ the set of Schreier's generators from Lemma \ref{SL} can be parametrized by the elements of the orbit and the elements of the set $P$ and is
\[Q=\{(\log y)g(\log({y^g}))^{-1}\in G;\ y\in m^G,\ g\in P\}.\]
Note that every element of $Q$ lies in $\operatorname{Stab}_G(m)$ as:
\[m^{(\log y)g(\log({y^g}))^{-1}}=[m^{(\log y)g}]^{(\log({y^g}))^{-1}}=(y^g)^{(\log({y^g}))^{-1}}=m^{\log({y^g})(\log({y^g}))^{-1}}=m.\]
This enables us to extend the previous algorithm for computing the orbit to the algorithm computing  simultaneously the orbit $m^G$ and generators of the stabilizer
$\operatorname{Stab}_G(m)$.\\

\begin{algorithm}[H]\label{OS}
\caption{Orbit-Stabilizer algorithm}
\SetAlgoLined
\KwData{A finitely-generated group $G=\langle P\rangle$ with a right action on a finite set $M$, an element $m\in M$.}
\KwResult{Elements of the orbit $m^G$ and a set $Q$ of Schreier's generators  of the stabilizer $\operatorname{Stab}_G(m)$.}
$\log m:=1$\;
List:=$\{(m,\log m)\}\subseteq M\times G$\; 
Q:=$\{\}$\;
  \ForEach{ $(y,\log y)\in$ List }{
  \ForEach{$g\in P$}{
   \eIf{ $y^g\not\in$ GetFirstComponents(List)}{
    $\log(y^g):=\log(y)g$\; 
     List.Append($(y^g,\log(y^g)$)\;
   }{
    Q.Append($\log(y)g(\log y^g)^{-1}$)\;
   }
  }
  }

\Return List, Q\;
\end{algorithm}

\vskip 2mm

We will apply the last algorithm in the situation described in the example above. However, we specify the input data in a more precise manner. 
Consider $M$ as the torsion part of the cohomology of $X_{n-1}$ computed from an effective chain complex, i.e.  
\[M=\Tor(H^{n+1}(\EC_*(X_{n-1});\pi_n)).\]
The set $M$ is finite by definition and computable as $H^{n+1}(\EC_*(X_{n-1});\pi_n)$ is the effective cohomology, and its torsion component is effectively computable as well. We suppose that the group $\Autef(X_{n-1})$ is finitely generated, and its generators are effectively computable. 
The right action of the group $G=\Autef(X_{n-1})\times \Aut(\pi_n)$ on $M$ has been described in the example after Definition 5.1. In the proof of the key statement \ref{PF} we will use the second algorithm to compute generators of the stabilizer  $\operatorname{Stab}_G([\kappa^{\operatorname{ef}}_{n-1}])$ and its right transversal.

\section{Spaces of finite $k$-type}
In this section, we will deal with generators of the groups $\Autef(X_{n})$. We would like to have an algorithm which from the knowledge of a finite list of generators for $\Autef(X_{n-1})$ gives a finite list of generators for  $\Autef(X_{n})$. The necessary and sufficient condition \eqref{eq} for lifting homotopy selfequivalences
\[\alpha_*[\kappa_{n-1}^X]=a_{n-1}^*[\kappa_{n-1}^X]\]
defines a stabilizer of an element $[\kappa_{n-1}^X]$ of the action of the group  $\Autef(X_{n-1})\times \Aut(\pi_n^X)$ on the group $H^{n+1}(X_{n-1};\ \pi_n^X)$. In general, this group is an infinite set and in such a case it is difficult to find an algorithm computing a list of generators for  $\Autef(X_{n})$. However, the situation becomes much easier if the Postnikov class $[\kappa_{n-1}^X]$ is of finite order. It happens for so-called spaces of finite $k$-type, see  \cite{AC}, Definition 4.4(a). We will modify this definition for our purposes.

\begin{defn}
Let $X$ be a simply connected simplicial set with effective homology. The set $X$ has finite $k$-type if its effective Postnikov tower $\{X_n,\kappa_n^X\}$ has Postnikov class $[\kappa_n^X]$ of finite order for each $n\in\Nbb$. 

We will say that $X$ has finite $k$-type through dimension $d$ if Postnikov classes
$[\kappa_n^X]$ are of finite order for all $n\le d-1$.
\end{defn}

The notion of space with finite $k$-type encompasses a broad class of spaces, for instance, $H$-spaces modulo the class of finite abelian groups (see \cite{AC}, Definition 4.4(b)).

\begin{defn}
A simply connected simplicial set  $X$ is an $H$-space modulo the class of finite abelian groups $\mathcal C$ if
there is a map $\mu\colon |X|\times |X|\to |X|$ such that the compositions $\mu\circ \iota_i$ are isomorphisms in homology modulo the class $\mathcal C$ where the maps $\iota_i\colon |X|\to |X|\times |X|$ are obvious inclusions to the $i$-th component of the product $|X|\times |X|$, $i=1,2$. 

If Postnikov stage $X_d$ is an $H$-space modulo the class $\mathcal C$, we will say that $X$ is an 
$H$-space modulo $\mathcal C$ through dimension $d$.
\end{defn}

In \cite{AC} Theorem 4.5, Arkowitz and Curjel proved the following statement:

\begin{prop}\label{hs}
Every $H$-space mod $\mathcal C$ has a finite $k$-type. Moreover, if $H_i(X)=0$ for all $i$ sufficiently large, then $X$ is an $H$-space mod $\mathcal C$ if and only if it is a space of finite $k$-type. 
\end{prop}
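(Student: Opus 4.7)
The plan is to reduce both implications to statements about the rationalization $X_{\mathbb{Q}}$, exploiting the fact that ``mod $\mathcal{C}$'' for $\mathcal{C}$ the class of finite abelian groups corresponds to ``after rationalization.'' The guiding dictionary is: having finite $k$-type is equivalent to the rational Postnikov tower splitting into a product of rational Eilenberg--MacLane spaces, and being an $H$-space mod $\mathcal{C}$ is equivalent to $X_{\mathbb{Q}}$ being a genuine rational $H$-space.

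For the forward direction, I would start with the multiplication $\mu\colon |X|\times|X|\to|X|$ and rationalize. The hypothesis that each $\mu\circ\iota_i$ is a $\mathcal{C}$-isomorphism in homology says precisely that $\mu_{\mathbb{Q}}$ makes $X_{\mathbb{Q}}$ into a rational $H$-space. By Hopf's theorem the rational cohomology $H^*(X_{\mathbb{Q}};\mathbb{Q})$ is then a free graded-commutative algebra, which forces the rational Postnikov tower to split stage by stage as $X_{\mathbb{Q}}(n)\simeq X_{\mathbb{Q}}(n-1)\times K(\pi_n^X\otimes\mathbb{Q},n)$; in particular all rational $k$-invariants vanish. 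Since the rational $k$-invariant is the image of $[\kappa_{n-1}^X]$ under the coefficient map $\pi_n^X\to\pi_n^X\otimes\mathbb{Q}$ and $\pi_n^X$ is finitely generated, the kernel of this map is precisely the torsion subgroup, so $[\kappa_{n-1}^X]$ is torsion and $X$ has finite $k$-type.

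For the converse, under the finite-homology hypothesis I would invert this reasoning. If every $[\kappa_n^X]$ is torsion then every rational Postnikov class vanishes, so $X_{\mathbb{Q}}$ is weakly equivalent to a product of rational Eilenberg--MacLane spaces. The assumption $H_i(X)=0$ for $i$ large, fed through the Serre spectral sequence of the Postnikov tower, forces only finitely many rational homotopy groups to be nonzero, so $X_{\mathbb{Q}}$ is homotopy equivalent to a \emph{finite} product $\prod_n K(\pi_n^X\otimes\mathbb{Q},n)$, which carries the coordinate-wise $H$-space multiplication. I would then lift this rational multiplication stage-by-stage along the Postnikov tower of $X$; at each stage the obstruction to lifting lies in an integral cohomology group whose rationalization vanishes and is therefore torsion. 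The resulting $\mu\colon X\times X\to X$ differs from a genuine multiplication only by torsion contributions, which is precisely the mod-$\mathcal{C}$ slack allowed by the definition.

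The main obstacle I expect is the converse direction, specifically the obstruction-theoretic lifting. The forward implication is essentially immediate from Hopf's theorem once one identifies $X_{\mathbb{Q}}$ as a rational $H$-space; the converse requires not only locating the obstructions in torsion groups but also verifying that the integral multiplication thereby produced genuinely satisfies the condition that each $\mu\circ\iota_i$ be a $\mathcal{C}$-isomorphism, rather than a looser purely rational statement. The bound on $H_i(X)$ is essential here: it limits the Postnikov tower to finitely many stages carrying nontrivial rational information, which in turn makes the inductive lifting terminate and guarantees that the accumulated torsion error fits inside $\mathcal{C}$.
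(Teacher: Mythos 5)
The paper does not prove this proposition at all: it is quoted verbatim from Arkowitz and Curjel (\cite{AC}, Theorem 4.5), so there is no internal argument to compare yours against. Judged on its own, your forward direction is essentially correct and standard: rationalizing the multiplication (after first correcting the unit maps, which the definition only requires to be mod-$\mathcal C$ equivalences rather than homotopic to the identity) exhibits $X_{\mathbb{Q}}$ as a genuine rational $H$-space; the Cartan--Serre/Milnor--Moore structure theory then splits the rational Postnikov tower into a product of rational Eilenberg--MacLane spaces; and for a finite-type space a class in $H^{n+1}(X_{n-1};\pi_n^X)$ is torsion exactly when its image in $H^{n+1}(X_{n-1};\pi_n^X\otimes\mathbb{Q})$ vanishes.

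The converse, which you yourself flag as the weak point, has a genuine gap that your sketch does not close. You propose to lift the product multiplication stage by stage and note that each obstruction lies in a group ``whose rationalization vanishes and is therefore torsion,'' concluding that the resulting map ``differs from a genuine multiplication only by torsion contributions, which is precisely the mod-$\mathcal C$ slack allowed by the definition.'' But a torsion obstruction is still a nonzero obstruction: if it does not vanish, the lift simply does not exist, and the definition of an $H$-space mod $\mathcal C$ demands an honest map $\mu\colon |X|\times|X|\to|X|$ --- the mod-$\mathcal C$ latitude applies only to the requirement that the composites $\mu\circ\iota_i$ be homology isomorphisms mod $\mathcal C$, not to the existence of $\mu$. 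There is no meaningful notion of a map that exists ``up to torsion.'' The missing device is the classical one: compare $X$ with the integral product $P=\prod_n K(\pi_n(X),n)$ via maps that induce multiplication by nonzero integers on homotopy. In one direction, since $[\kappa_{n-1}]$ has finite order $N_n$, the class $N_n\iota_n$ transgresses to zero and so extends over $X_n$, yielding $g\colon X\to P$ which is a mod-$\mathcal C$ equivalence; in the other direction, whenever the obstruction to lifting a map $P\to X_{m-1}$ has order $N$, one precomposes with the self-map $N\cdot\mathrm{id}$ of $P$ (available because $P$ is an $H$-space), which multiplies the obstruction by a positive power of $N$ and hence annihilates it while changing the map only by a mod-$\mathcal C$ equivalence. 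The hypothesis $H_i(X)=0$ for $i$ large enters to guarantee that $\pi_i(X)$ is finite for large $i$ and that $X$ has the homotopy type of a finite-dimensional complex, so only finitely many such corrections are needed and the induction terminates; one then sets $\mu=s\circ m_P\circ(g\times g)$ and checks $\mu\circ\iota_i\simeq s\circ g$ is a mod-$\mathcal C$ homology isomorphism. Without the multiplication-by-$N$ correction your induction stalls at the first nonvanishing torsion obstruction.
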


We can adapt this result to our situation:

\begin{cor}\label{hsd}
Let $X$ be a finite simply connected simplicial set of dimension $d$. It is an $H$-space mod $\mathcal C$ through dimension $d$ if and only if it is of finite $k$-type through dimension $d$. 
\end{cor}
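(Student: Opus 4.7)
The plan is to reformulate both sides of the equivalence as statements about the single simplicial set $X_d$ and to reduce Corollary~\ref{hsd} to an application of Proposition~\ref{hs}. By definition, $X$ is an $H$-space mod $\mathcal{C}$ through dimension $d$ if and only if $X_d$ is an $H$-space mod $\mathcal{C}$. Moreover, $X$ has finite $k$-type through dimension $d$ if and only if $X_d$ has finite $k$-type as a simplicial set in its own right: the Postnikov towers of $X$ and of $X_d$ coincide through stage $d$, and for larger indices the coefficient groups $\pi_{n+1}(X_d)=0$ force the remaining Postnikov classes $[\kappa_n^{X_d}]$ to be automatically zero. It therefore suffices to show that $X_d$ has finite $k$-type if and only if $X_d$ is an $H$-space mod $\mathcal{C}$.

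The forward implication is immediate from the first (unconditional) half of Proposition~\ref{hs}. For the converse one cannot simply invoke the second half of Proposition~\ref{hs} with $X_d$ in place of $X$, since $X_d$ generally has non-vanishing homology in arbitrarily high degrees (for instance, whenever $X$ has a non-trivial even-degree homotopy group, the factor $K(\pi_n(X),n)$ already has infinitely many non-zero cohomology groups). Instead, I would exploit the fact that $X_d$ has only finitely many non-trivial homotopy groups. Under the hypothesis that all Postnikov classes of $X_d$ are torsion, these $k$-invariants vanish rationally, so the rationalization $(X_d)_{\Qbb}$ is homotopy equivalent to the product $\prod_{2 \le n \le d} K(\pi_n(X) \otimes \Qbb, n)$ of rational Eilenberg--MacLane spaces, which is manifestly an $H$-space.

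From this rational $H$-structure one constructs a multiplication $\mu\colon X_d \times X_d \to X_d$ satisfying the mod-$\mathcal{C}$ condition by induction up the (essentially finite) Postnikov tower of $X_d$. At stage $n$, the obstruction to lifting a partial multiplication $\mu_{n-1}$ to $\mu_n$ lies in $H^{n+1}(X_n \times X_n; \pi_n^X)$ and equals a pullback of $[\kappa_{n-1}^X]$; by hypothesis this class is torsion, hence the obstruction lies in the class $\mathcal{C}$ of finite abelian groups. The main obstacle will be making this last step precise: one must verify that the $H$-space condition modulo $\mathcal{C}$ (that $\mu \circ \iota_i$ induces an isomorphism in homology mod $\mathcal{C}$) is preserved at each inductive step, which is essentially a re-examination of the argument of Arkowitz and Curjel~\cite{AC} adapted to the setting of spaces with finitely many non-trivial \emph{homotopy} groups rather than finitely many non-trivial \emph{homology} groups.
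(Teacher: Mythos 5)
Your reduction of both sides of the equivalence to statements about the single space $X_d$ is correct and careful, and the forward implication (via the unconditional half of Proposition~\ref{hs}) is complete. You have also correctly spotted the one non-trivial point: the second half of Proposition~\ref{hs} cannot be applied verbatim to $X_d$, since a Postnikov stage generally has non-vanishing homology in arbitrarily high degrees. The paper itself offers no proof of the corollary beyond the phrase ``we can adapt this result,'' so on this point your write-up is actually more explicit than the source; the intended adaptation is precisely the one you name, namely that Arkowitz--Curjel's argument works equally well for spaces with finitely many non-trivial homotopy groups.

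The genuine gap is in your converse direction, at the step ``the obstruction \ldots equals a pullback of $[\kappa_{n-1}^X]$; by hypothesis this class is torsion, hence the obstruction lies in the class $\mathcal C$.'' An obstruction class in $H^{n+1}(X_n\times X_n;\pi_n^X)$ being torsion does not make it vanish, and a non-zero obstruction blocks the lift of $\mu_{n-1}\circ(p_n\times p_n)$ outright; the definition of an $H$-space mod $\mathcal C$ gives you no license to ignore a non-zero class merely because it is finite. The standard repair -- and the actual content of Arkowitz--Curjel's proof -- is to lift not $\mu_{n-1}\circ(p_n\times p_n)$ itself but a modification of it, e.g.\ one precomposed or postcomposed with self-maps inducing multiplication by a suitable integer $N$ on homotopy groups so that the pulled-back obstruction $N\cdot(\text{torsion class})$ becomes zero, and then to check that the resulting $\mu\circ\iota_i$ still induce homology isomorphisms mod $\mathcal C$ (which is where finite generation of the groups enters). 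Your sketch omits this mechanism entirely, and your separate observation that $(X_d)_{\Qbb}\simeq\prod K(\pi_n\otimes\Qbb,n)$, while true, does not feed into the construction, because the rationalization map points the wrong way to transport a multiplication back to $X_d$. You acknowledge that this step needs to be made precise, but as written the inductive argument would stall at the first stage with a non-zero torsion $k$-invariant.
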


\begin{rem}
It is well known that every $m$-connected space, $m\geq 1$, is a rational $H$-space through dimension 
$2m$ and an $H$-space mod $\mathcal C$ through the same dimension. Consequently, it is also of finite $k$-type through dimension $2m$. 
\end{rem}

\subsection*{Homotopy self-equivalence of $X_{n}$}
Let $X$ be a space of finite $k$-type and $\{X_{n}\}$ its effective Postnikov tower. Here our attention focuses on studying  $\Autef(X_{n})$. 
We follow the approach of the previous section and utilise Orbit-Stabilizer algorithm on the finite order element $[\kappa_{n-1}^X]$ with the right action
\begin{align}
\Tor(H^{n+1}(X_{n-1};\pi_n^X))\times(\Autef(X_{n-1})\times \Aut(\pi_n^X))&\to \Tor(H^{n+1}(X_{n-1};\pi_n^X))\notag\\
([\kappa],[a], \gamma)&\longmapsto \gamma^{-1}_*a^*[\kappa].\label{FreeAct}
\end{align}
Note that the group $G=\Autef(X_{n-1})\times \Aut(\pi_n^X)$ is finitely generated whenever $\Autef(X_{n-1})$ has this property. Moreover, its generators  can be taken as the pairs $([a],e)$ and $([\operatorname{id}],\gamma)$ where $[a]$ goes through all generators of $\Autef(X_{n-1})$, $\gamma$ runs through generators of 
$\Aut(\pi_n^X)$ and $e\in \Aut(\pi_n^X)$ is the neutral element. This choice is possible due to the commutativity of the action of $\gamma_*^{-1}$ and $a^*$ 
on $\Tor(H^{n+1}(X_{n-1};\pi_n^X))$.
The torsion $\Tor(H^{n+1}(X_{n-1};\pi_n^X))$ is computable and finite, see the related discussion at the end of Section 6. 

\begin{prop}\label{PF}
Let $X$ be simply connected finite simplicial set of finite $k$-type through dimension $d$. Consider its Postnikov tower $\{X_{n}\}$. 
If $n \leq d$, generators of $\Autef(X_{n-1})$ are known and form a finite list, then there is an algorithm that computes a finite list of generators of $\Autef(X_{n})$.
\end{prop}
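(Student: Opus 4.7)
The plan is to combine Corollary \ref{cor} in the self-equivalence case $X=Y$ with the Orbit-Stabilizer algorithm of Section 6. Applying Corollary \ref{cor} with $g_{n-1}=\id$ says that a pair $([a_{n-1}],\gamma)\in\Autef(X_{n-1})\times\Aut(\pi_n^X)$ lifts to some $[f_n]\in\Autef(X_n)$ with $p_n^X\circ f_n=a_{n-1}\circ p_n^X$ and $\gamma$ inducing $f_n$ on the $n$-th fibre if and only if
$$\gamma_*^{-1}\,a_{n-1}^*[\kappa_{n-1}^X]=[\kappa_{n-1}^X],$$
i.e., $([a_{n-1}],\gamma)$ lies in the stabilizer $S$ of $[\kappa_{n-1}^X]$ under the right action \eqref{FreeAct} of $G:=\Autef(X_{n-1})\times\Aut(\pi_n^X)$. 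So the image of the natural projection $\Autef(X_n)\to G$ is exactly $S$, and a generating set for $\Autef(X_n)$ can be assembled from generators of $S$ (with explicit lifts) together with generators of its kernel.

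For the stabilizer part, since $X$ has finite $k$-type through dimension $d$ and $n\le d$, the class $[\kappa_{n-1}^X]$ is torsion, so the $G$-action preserves the finite set $M:=\Tor(H^{n+1}(X_{n-1};\pi_n^X))$, which is effectively computable from the effective chain complex $\EC_*(X_{n-1})$. The group $\Autef(X_{n-1})$ is finitely generated by hypothesis, and $\Aut(\pi_n^X)$ is finitely generated with computable generators since $\pi_n^X$ is finitely generated abelian; thus $G$ has a known finite generating set. Running the Orbit-Stabilizer algorithm of Section 6 on $M$, the $G$-action, and the element $[\kappa_{n-1}^X]\in M$ outputs a finite list of Schreier generators $(a_{n-1}^{(i)},\gamma^{(i)})$ of $S$. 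For each, Theorem \ref{suf} furnishes an explicit effective lift $[f_n^{(i)}]\in\Autef(X_n)$.

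For the kernel part, by Addendum \ref{ap} any two effective self-equivalences of $X_n$ covering $(\id,\id)$ differ up to homotopy by the pullback $c\circ p_n^X$ of a map $c\colon X_{n-1}\to K(\pi_n^X,n)$. Under the bijection $[X_{n-1},K(\pi_n^X,n)]\cong H^n(X_{n-1};\pi_n^X)$, the kernel is therefore a quotient of the additive group $H^n(X_{n-1};\pi_n^X)$, which is finitely generated and computable from $\EC_*(X_{n-1})$ (its bases are finite in each dimension, so $\Hom(\EC_n(X_{n-1}),\pi_n^X)$ is finitely generated). Choose a finite generating set $\{[c^{(j)}]\}$; each produces a class $[\phi^{(j)}]\in\Autef(X_n)$ given fibrewise by $(x,y)\mapsto(x,y+c^{(j)}(p_n^X(x,y)))$.

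Combining, the finite union $\{[f_n^{(i)}]\}\cup\{[\phi^{(j)}]\}$ generates $\Autef(X_n)$: any $[f_n]\in\Autef(X_n)$ projects to an element of $S$ writable as a word in the $(a_{n-1}^{(i)},\gamma^{(i)})$; the corresponding word in the $[f_n^{(i)}]$ agrees with $[f_n]$ modulo the kernel, and the residual kernel element is a $\Zbb$-linear combination of the $[\phi^{(j)}]$ by additivity in $H^n(X_{n-1};\pi_n^X)$. The main obstacle is verifying that the ambiguity map $H^n(X_{n-1};\pi_n^X)\to\ker(\Autef(X_n)\to G)$ is surjective and additive as claimed; both properties follow from Addendum \ref{ap} combined with the fact that the composition $\phi^{(c)}\circ\phi^{(c')}$ corresponds, on the fibre, to addition of cocycles, so the kernel is generated by the images of the $[c^{(j)}]$. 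A secondary technical point is the effective production of generators of $\Aut(\pi_n^X)$ for a general finitely generated abelian group $\pi_n^X$, which is a classical computation on the Smith normal form.
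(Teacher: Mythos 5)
Your proposal is correct and follows essentially the same route as the paper: it produces the identical generating set, namely lifts (via Theorem \ref{suf}) of the Schreier generators of $\operatorname{Stab}_G([\kappa_{n-1}^X])$ obtained from the Orbit-Stabilizer algorithm on the finite set $\Tor(H^{n+1}(X_{n-1};\pi_n^X))$, together with the translations coming from generators of $H^n(X_{n-1};\pi_n^X)$. The only difference is presentational — you package the verification as a group-extension argument (image equals the stabilizer, kernel is a quotient of $H^n(X_{n-1};\pi_n^X)$ via Addendum \ref{ap}), whereas the paper factors an arbitrary effective self-equivalence directly as $A=h\circ t$ — but the substance is the same.
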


\begin{proof}
First we describe the algorithm and then we show that it really gives all the generators of $\Autef(X_{n})$. Since the group  $\Autef(X_{n-1})$ is finitely generated and $[\kappa^X_{n-1}]$ lies in a finite set $\Tor(H^{n+1}(X_{n-1};\pi_n^X))$, we can use Orbit-Stabilizer algorithm to compute all the generators
of the stabilizer $\operatorname{Stab}_G([\kappa^X_{n-1}])$. Denote them $([a_j],\gamma_j)$, $j=1,2,\dots, k$. Indeed, they satisfy the relation
$$ \gamma_{j*}^{-1}a_j^*[\kappa^X_{n-1}]=[\kappa^X_{n-1}],$$
hence we can find algorithmically maps $\omega_j\colon X_{n-1}\to E(\pi_n^X,n)$ such that 
\[\ev^{-1}(\gamma_{j*}^{-1}a_j^*\kappa^X_{n-1}-\kappa^X_{n-1})=\delta\omega_j.\]
Then the homotopy classes of self-maps 
 \[h_j(x,y)=(a_j(x),\gamma_{j*}(y + \omega_j(x)))\]
are elements of  $\autef(X_{n})$. 

Next we take the cochains $c_1,c_2,\dots, c_p$ such that their cohomology classes are generators of the group $H^n(X_{n-1};\pi_n^X)$. The translations $t_m:X_n\to X_n$
\[t_m(x,y)=(x,y+c_m(x))\]
are elements of  $\autef(X_{n})$, as well. We assert that the homotopy classes $[h_j]$, $1\le j\le k$, 
and $[t_m]$, $1\le m\le p$, form a set of generators of the group
$\Autef(X_{n})$.

\medskip

Take a class $[A]\in \Autef(X_{n})$ whose representative $A$ has the form
    \[A(x,y)=(a(x),\gamma_*(y+\omega(x)))\]
where $\gamma\in\Aut(\pi_{n}^X)$, $a\in\autef(X_{n-1})$ and $\omega\colon X_{n-1}\to E(\pi_n^X,n)$  satisfy 
\[\ev^{-1}((\gamma)_*^{-1}a^*\kappa^X_{n-1}-\kappa^X_{n-1})=\delta\omega.\] 
Hence the pair $([a],\gamma)$ is an element of the stabilizer $\operatorname{Stab}_G([\kappa^X_{n-1}])$. Thus we can write it as a product of the generators $([a_j],\gamma_j)$:
\[([a],\gamma)=([a_{j_1}],\gamma_{j_1})\cdot ([a_{j_2}],\gamma_{j_2})\dots \cdot([a_{j_s}],\gamma_{j_s})\]
where some indices can repeat. The composition 
$h= h_{j_1}\circ h_{j_2}\dots\circ h_{j_s}$
has the form
\[h(x,y)=(a(x), \gamma_*(x+\Omega (y)))\]
where
\[\Omega = \omega_{j_s}+(\gamma_{j_s}^{-1})_*\omega_{j_{s-1}}a_{j_s}+\dots+(\gamma_{j_2}\dots \gamma_{j_{s-1}}\gamma_{j_{s}})^{-1}_*\omega_{j_1}a_{j_2}\dots a_{j_s}\]
satisfies
\[\ev^{-1}((\gamma)_*^{-1}a^*\kappa^X_{n-1}-\kappa^X_{n-1})=\delta\Omega.\]
Since $\delta\omega=\delta\Omega$, there is $c\colon X_{n-1}\to K(\pi_n^X,n)$ such that
$\omega=\Omega+c$.
If we define the translation $t\colon X_n\to X_n$, $t(x,y)=(x, y+c(x))$, we get
\[A=h\circ t\]
The cohomology class $[c]$ is a linear combination of the cohomology classes $[c_m]$, $1\le m\le p$, hence the translation  $t$ is homotopic to the corresponding composition of translations $t_m$. Consequently, the homotopy selfequivalence $A$ is homotopic to a composion of homopy selfequivalences $h_j$, $1\le j\le k$, and $t_m$, $1\le j\le p$.  

\end{proof}

\section{Proof of the main result}
The last section presents a sequence of algorithms that decides whether two spaces of finite $k$-type are simplicially weak homotopy equivalent. It is a concatenation of the results from the previous four sections.

The following Theorem is a more detailed version of Theorem A.

\begin{thm}\label{main}
Let $X$ and $Y$ be simply connected finite simplicial sets of dimension $d$. If $X$ and $Y$ are of finite $k$-type through dimension $d$, then the question of whether $X$ and $Y$ are simplicially weak homotopy equivalent is algorithmically decidable. 

If the spaces are simplicially weak homotopy equivalent, our algorithm finds a simplicial weak homotopy equivalence $f\colon X_d\to Y_d$ between their Postnikov stages in dimension $d$. 

Moreover, we can algorithmically find all group generators $\Autef(X_d)$. If $X$ and $Y$ are simplicially weak homotopy equivalent, we can consequently describe all simplicial weak homotopy equivalences between $X_d$ and $Y_d$ up to homotopy as compositions of $f$ and generators of $\Autef(X_d)$.
\end{thm}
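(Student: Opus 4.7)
The plan is to construct $f_d$ and generators of $\Autef(X_d)$ by a simultaneous induction on $n$ with $1 \le n \le d$. At each stage the inductive hypothesis consists of (i) an effective homotopy equivalence $f_n \in \isoef(X_n, Y_n)$ and (ii) a finite generating list of $\Autef(X_n)$, or else a negative answer at some earlier stage. Once the induction reaches $n = d$, Corollary \ref{criterion} reduces the original problem to the existence of a weak equivalence $X_d \to Y_d$, so both the decision procedure and the construction of $f := f_d$ follow. As preparation, apply the algorithm recalled in Section 3 to compute the effective Postnikov towers $\{X_n\},\{Y_n\}$ together with their effective Postnikov cocycles $\kappa_n^X, \kappa_n^Y$ up to stage $d$. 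The base $n = 1$ is trivial: $X_1 = Y_1 = *$, $f_1 = \id$, and $\Autef(X_1)$ is the trivial group.

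For the inductive step at $n \ge 2$, assume $f_{n-1} \in \isoef(X_{n-1}, Y_{n-1})$ and generators of $\Autef(X_{n-1})$ are in hand. First compute the finitely generated abelian groups $\pi_n^X$ and $\pi_n^Y$; if they are non-isomorphic, halt and return ``not equivalent''. Otherwise fix an identification $\pi_n := \pi_n^X \cong \pi_n^Y$ and compute a finite generating set of $\Aut(\pi_n)$ by standard methods for finitely generated abelian groups. By the finite $k$-type hypothesis on both $X$ and $Y$, both $[\kappa_{n-1}^X]$ and $f_{n-1}^*[\kappa_{n-1}^Y]$ land in the finite, computable set $M := \Tor(H^{n+1}(X_{n-1}; \pi_n))$, on which the finitely generated group $G := \Autef(X_{n-1}) \times \Aut(\pi_n)$ acts on the right by $([\kappa], [a], \gamma) \mapsto \gamma^{-1}_* a^*[\kappa]$. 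Apply Algorithm \ref{OS} with seed element $f_{n-1}^*[\kappa_{n-1}^Y] \in M$ to enumerate the orbit and compute Schreier generators of its stabilizer. By Corollary \ref{cor}, $X_n$ and $Y_n$ are weakly equivalent iff $[\kappa_{n-1}^X]$ appears in this orbit; if it does not, stop with a negative answer. Otherwise the orbit traversal records an explicit pair $(a_{n-1}, \gamma) \in G$ with $\gamma_*[\kappa_{n-1}^X] = (f_{n-1} \circ a_{n-1})^*[\kappa_{n-1}^Y]$, and Theorem \ref{suf} produces algorithmically a lift $f_n \in \isoef(X_n, Y_n)$ of $f_{n-1} \circ a_{n-1}$. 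Finally, Proposition \ref{PF} supplies generators of $\Autef(X_n)$: lift the Schreier generators of $\operatorname{Stab}_G([\kappa_{n-1}^X])$ (obtained by re-running Algorithm \ref{OS} with seed $[\kappa_{n-1}^X]$) via Theorem \ref{suf} to self-equivalences $h_j$, and adjoin the finitely many translations $t_m$ corresponding to a computed basis of $H^n(X_{n-1}; \pi_n)$.

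At termination with $n = d$ this yields the desired $f$ and a finite generating set of $\Autef(X_d)$. For the last assertion, let $h \colon X_d \to Y_d$ be any simplicial weak homotopy equivalence; by Proposition \ref{iso}, $h$ is homotopic to some $h' \in \isoef(X_d, Y_d)$, and by Proposition \ref{isoef}, $f^{-1} \circ h' \in \autef(X_d)$, hence $h \simeq f \circ (f^{-1} \circ h')$ realizes $[h]$ as $f$ composed with an element of $\Autef(X_d)$, itself a word in the computed generators. The principal obstacle, and the reason the finite $k$-type hypothesis is essential, is that Algorithm \ref{OS} requires the $G$-set to be finite; $H^{n+1}(X_{n-1}; \pi_n)$ is generally infinite, but under finite $k$-type all relevant Postnikov classes are torsion, so restricting the action to $\Tor(H^{n+1}(X_{n-1}; \pi_n))$ simultaneously makes the orbit computation terminate and, by Corollary \ref{cor}, still detects homotopy equivalence faithfully.
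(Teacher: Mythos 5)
Your proposal is correct and follows essentially the same route as the paper: induction on the Postnikov stages, using Corollary \ref{criterion} to reduce to stage $d$, Corollary \ref{cor} rephrased as membership of $[\kappa_{n-1}^X]$ in the $G$-orbit of $f_{n-1}^*[\kappa_{n-1}^Y]$ under the action of $\Autef(X_{n-1})\times\Aut(\pi_n)$ on the finite torsion set, Algorithm \ref{OS} for the orbit/stabilizer computation, Theorem \ref{suf} for the lift, and Proposition \ref{PF} for the generators of $\Autef(X_n)$. The only additions you make are explicit but routine: the isomorphism-type check on $\pi_n$ and the short argument via Propositions \ref{iso} and \ref{isoef} for the final assertion, both of which the paper also uses (the former in its pre-processing step, the latter implicitly).
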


We describe the proof of Theorem \ref{main} in single steps.

\subsection*{Pre-processing} First, we compute effective Postnikov towers for both sets $X$ and $Y$ according to the instructions in \cite[Section 4]{poly}. That algorithm provides suitable isomorphism types $\pi_n^X$ and $\pi_n^Y$ of homotopy groups and gives a fixed decomposition of these groups to the components $\Free$ and $\Tor$ as a part of their isomorphism type.
If the homotopy groups $\pi_n^X$ and $\pi_n^Y$ have different isomorphism types, $X$ and $Y$ are definitely not simplicially weak homotopy equivalent.
Simultaneously, by computing Postnikov classes, one can be convinced that both spaces are of finite $k$-type through dimension $d$.
\smallskip

Inductively, we compute generators of the groups $\Autef(X_{n})$ for $n\le d$ using the following algorithm.

\begin{alg} $\ $
\newline
{\rm Input:} A finite set of generators of the group  $\Autef(X_{n-1})$ and $[\kappa_{n-1}^X]$ is torsion in $H^{n+1}(X_{n-1};\pi_n^X)$.
\newline
{\rm Output:} A finite set of generators of the group  $\Autef(X_{n})$.
\end{alg} 

\begin{proof}[Description] A complete list of generators of $\Autef(X_{n})$ is provided in Proposition \ref{PF}. Every such generator represents a simplicial weak homotopy equivalence in $\autef(X_{n})$.
A key part of that procedure is Orbit-Stabilizer algorithm and known description of generators of the group
$\Aut(\pi_n^X)\cong\Aut(\Free(\pi_n^X)\oplus\Tor(\pi_n^X))$ depending on
$\Aut(\Free(\pi_n^X))\cong\operatorname{GL}(\operatorname{rank}(\pi_n^X), \mathbb Z)$, $\Aut(\Tor(\pi_n^X))$ and $\Hom(\Free(\pi_n^X), \Tor(\pi_n^X))$.
\end{proof}

\begin{rem}
The groups $\Aut(\Free(\pi_n^X))$ can be identified with $\GL(\Zbb,m_n)$. It is well known that the minimal number of generators of $GL(\Zbb,m)$ is 2. We refer to \cite[Chapter VII]{MN} for more details about the generators.
\end{rem}

In Corollary \ref{criterion}, it has been shown that the spaces $X$ and $Y$ of dimension $d$ are simplicially weak homotopy equivalent if and only if their Postnikov stages $X_d$ and $Y_d$ are simplicially weak homotopy equivalent. We will subsequently decide if their Postnikov stages are simplicially weak homotopy equivalent by going from $X_1$ and $Y_1$ to $X_d$ and $Y_d$.

\subsection*{Base step} Since $X_1=Y_1$ is a point, there is only one simplicial weak homotopy equivalence between them, so $\isoef(X_{1},Y_{1})=\{\id\}$.
 
\subsection*{Induction step} This step means to find whether there is a simplicial weak homotopy equivalence 
\[f_{n}\in\isoef(X_{n},Y_{n})\]
under the condition that we have a simplicial weak homotopy  equivalence $f_{n-1}\in \isoef(X_{n-1},Y_{n-1})$. This is carried out for $2\le n\le d$ by the following algorithm whose main idea is to seek  $a\in\aut^{\operatorname{ef}}(X_{n-1})$ fitting into the diagram:

\begin{center}
\begin{tikzpicture}
\matrix (m) [matrix of math nodes, row sep=2em,
column sep=4em, minimum width=2em]
{ 
X_{n} & {} & Y_{n} \\
X_{n-1} & X_{n-1}  & Y_{n-1}  \\};
  \begin{scope}[every node/.style={scale=.8}]
\path[->,dashed](m-2-1) edge node[below] {$a$} (m-2-2);
\path[->](m-2-2) edge node[below] {$f_{n-1}$} (m-2-3);
\path[->](m-1-1) edge node[auto] {$f_{n}$}  (m-1-3);
\path[->>](m-1-1) edge (m-2-1);
\path[->>](m-1-3) edge (m-2-3);
\path[->](m-2-1) edge[bend right=30] node[below] {$g_{n-1}$} (m-2-3);
\end{scope}
\end{tikzpicture}
\end{center}
If the simplicial weak homotopy equivalence $a$ exists then we update all $f_i$ to the maps $f_i\circ a_i$ for each $2\leq i\leq n-1$. More implementation details are part of the next algorithm.
\begin{alg} $\ $
\newline
{\rm Input:} A simplicial weak homotopy equivalence $f_{n-1}\in\isoef(X_{n-1},Y_{n-1})$ and Postnikov classes $[\kappa_{n-1}^X]$ and $[\kappa_{n-1}^Y]$ in the torsion parts of $H^{n+1}(X_{n-1};\pi_n^X)$ and $H^{n+1}(Y_{n-1};\pi_n^Y)$, respectively. 
\newline
{\rm Output:} A decision whether $X_{n}$ and $Y_{n}$ are simplicially weak homotopy equivalent. If yes, the algorithm provides a simplicial weak homotopy equivalence  $f_{n}\in \isoef(X_{n},Y_{n})$. If no, then $X$ and $Y$ are not simplicially weak homotopy equivalent.
\end{alg} 

\begin{proof}[Description]
We will use the necessary and sufficient condition from Corollary \ref{cor}. So we want to find if there is a pair $(a,\alpha)\in\autef(X_{n-1})\times\Aut(\pi_n^Y)$ such that
\begin{equation}\label{ns}
\sigma_*[{\kappa}_{n-1}^{X}]=\alpha^{-1}_*a^*f_{n-1}^*[\kappa_{n-1}^Y]
\end{equation}
for a fixed isomorphism $\sigma\colon\pi_n^X\to\pi_n^Y$. As both classes $[\kappa_{n-1}^X]$ and $[\kappa_{n-1}^Y]$ are torsion, we solve the equation by considering the right action of the group $G=\Autef(X_{n-1})\times \Aut(\pi_n^Y)$ from section 6:
\begin{align*}
\Tor(H^{n+1}(X_{n-1};\pi_n^Y))\times G&\to \Tor(H^{n+1}(X_{n-1};\pi_n^Y))\notag\\
([\kappa],[g], \gamma)&\longmapsto \gamma^{-1}_*g^*[\kappa].
\end{align*}
We can apply Orbit-Stabilizer algorithm \ref{OS} on this action and compute the orbit of the element
$$f_{n-1}^*[\kappa_{n-1}^Y].$$ 
Now it suffices to go through the finite list of the orbit elements and to decide if $\sigma_*[{\kappa}_{n-1}^{X}]\in \left(f_{n-1}^*[\kappa_{n-1}^Y]\right)^G$.
If it is missing in the list, then $X_{n}$ and $Y_{n}$ are not simplicially weak homotopy equivalent as the necessary condition (\ref{ns}) is not satisfied. It implies that $X$ and $Y$ are not simplicially weak homotopy equivalent.

If $\sigma_*[{\kappa}_{n-1}^{X}]\in \left(f_{n-1}^*[\kappa_{n-1}^Y]\right)^G$, we take the right transversal $\log(\sigma_*[{\kappa}_{n-1}^{X}])=([a],\alpha)$ and it satisfies  sufficient condition (\ref{ns}) so that $f_{n}$ can be constructed from $g_{n-1}=f_{n-1}\circ a$ by Theorem \ref{suf}.
\end{proof}

\begin{proof}[Proof of Corollary B]
Let $X$ and $Y$ be finite simplicial sets. The dimensions of multiple suspensions $S^rX$ and $S^rY$ of $X$ and $Y$ will be less than twice their connectivity for $r$ sufficiently large. According to the remark after Corollary \ref{hsd}, $S^rX$ and $S^rY$ will be of finite $k$-type through their dimension and will satisfy assumptions of Theorem \ref{main}. So we can apply the described algorithm to decide if $S^rX$ and $S^rY$ are homotopy equivalent, i.e., if $X$ and $Y$ are stably homotopy equivalent. 
\end{proof}
\section{Another class of spaces}
What we have done so far enables us to decide about homotopy equivalence inside another class of spaces which expands spaces with finite homotopy groups differently than spaces of finite $k$-type.

\begin{prop}\label{another}
Let $X$ and $Y$ be a finite simply connected simlicial sets of dimension $d$ such that for $2\le n\le d$
\begin{align*}
\rank\pi_n^X&=\rank\pi_n^Y\le 1,\\
H^n(X_{n-1};\pi_n^X)&\cong\Tor(H^n(X_{n-1};\pi_n^X)).
\end{align*}
Then there is an algorithm that decides if $X$ and $Y$ are simplicially weak homotopy equivalent.
\end{prop}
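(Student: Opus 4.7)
The plan is to rerun the inductive decision algorithm of Theorem~\ref{main} essentially verbatim; the key new point is that the two hypotheses force each group $\Autef(X_n)$ to remain finite throughout the induction, so the orbit--stabilizer step becomes elementary even though Postnikov classes are no longer assumed torsion and the ambient cohomology groups $H^{n+1}(X_{n-1};\pi_n^X)$ may be infinite.

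First I would prove by induction on $n$, for $1\le n\le d$, that $\Autef(X_n)$ is a finite group (and similarly for $Y$). The base case $\Autef(X_1)=\{\id\}$ is trivial. For the step, the rank hypothesis $\rank\pi_n^X\le 1$ makes $\Aut(\pi_n^X)$ finite: writing $\pi_n^X=\Free(\pi_n^X)\oplus \Tor(\pi_n^X)$, an automorphism is determined by the choice of $\pm 1$ on the free factor (if present), an element of $\Tor(\pi_n^X)$, and an automorphism of the torsion part. Combined with the inductive hypothesis, the group $G=\Autef(X_{n-1})\times\Aut(\pi_n^X)$ is finite. Consider now the projection
\[
\pi\colon \Autef(X_n)\longrightarrow G,\qquad [f_n]\longmapsto ([f_{n-1}],\gamma).
\]
Its image lies in $\Stab_G([\kappa_{n-1}^X])$ and hence is finite. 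Addendum~\ref{ap} shows that two elements of $\Autef(X_n)$ with the same image under $\pi$ differ, up to homotopy, by a translation of the form $\gamma_*\circ c\circ p_n^X$ with $c\colon X_{n-1}\to K(\pi_n^X,n)$; thus $\ker\pi$ is a quotient of $H^n(X_{n-1};\pi_n^X)$, which is finite (a finitely generated abelian group, via effective homology, that is all torsion by the second hypothesis). A finite kernel and a finite image give a finite $\Autef(X_n)$.

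With finiteness in hand, the induction step of the Theorem~\ref{main} algorithm applies without modification. At stage $n$ the finite group $G$ acts on the finitely generated abelian group $H^{n+1}(X_{n-1};\pi_n^Y)$, and every orbit has size at most $|G|$; one enumerates the orbit of $f_{n-1}^*[\kappa_{n-1}^Y]$ by applying each of the finitely many group elements, then tests whether $\sigma_*[\kappa_{n-1}^X]$ occurs. Equality tests in $H^{n+1}(X_{n-1};\pi_n^Y)$ are algorithmic because $X_{n-1}$ carries effective homology and the coefficient group is finitely generated. If the target lies in the orbit, Theorem~\ref{suf} provides the lift $f_n\in\isoef(X_n,Y_n)$; otherwise Corollary~\ref{cor} together with Corollary~\ref{criterion} forces $|X|\not\simeq|Y|$. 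Generators of $\Autef(X_n)$ for the next inductive step are produced by Proposition~\ref{PF}, where the stabilizer generators are now obtained by straightforward enumeration inside the finite group $G$.

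The main obstacle is the finiteness claim for $\Autef(X_n)$; one must be careful to identify the kernel of $\pi$ correctly as (a quotient of) $H^n(X_{n-1};\pi_n^X)$ via Addendum~\ref{ap}, and to verify that the image is forced into $\Stab_G([\kappa_{n-1}^X])$. Once this extension picture is established, the rest of the argument is a reuse of the machinery developed for the finite $k$-type case, the only change being that in the orbit--stabilizer step one no longer restricts to the torsion subgroup of $H^{n+1}$, since the orbits themselves are already finite.
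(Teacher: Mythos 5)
Your proposal is correct and follows essentially the same route as the paper: both arguments use the rank hypothesis to make $\Aut(\pi_n^X)$ finite and the torsion hypothesis on $H^n(X_{n-1};\pi_n^X)$ to make the ``translation'' part finite, conclude inductively via the parametrization of Proposition~\ref{PF} (equivalently, your kernel--image argument through Addendum~\ref{ap}) that $\Autef(X_n)$ stays finite, and then run the decision step of Theorem~\ref{main} by exhaustive search over the finite group $\Autef(X_{n-1})\times\Aut(\pi_n)$ in place of the Orbit--Stabilizer machinery on a torsion class. Your write-up merely makes explicit a few points the paper leaves implicit (the finite orbit inside a possibly infinite $H^{n+1}$, and computable equality tests there), but the underlying argument is the same.
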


\begin{proof}
As in the previous section  we compute effective Postnikov towers and we proceed by induction.
The induction step requires to have pre-computed the group $\Autef(X_{n-1})$, and we deal with it in the subsequent rows. 
Since $\rank\pi_n^X=\rank\pi_n^Y\le 1$, the groups $\Aut(\pi_n^X)$ are finite. Now, we can go through the proof of Proposition \ref{PF} to show that if $\Autef(X_{n-1})$ is finite then so is $\Autef(X_{n})$.
If $\Autef(X_{n-1})$ is finite then the subgroup $\Stab([\kappa_{n-1}^X])\subseteq \Autef(X_{n-1})\times \Aut(\pi_n^X)$ is also finite and can be found by exhaustive search. All elements of the group $\Autef(X_{n})$ are parameterized by elements from the stabilizer and elements from the group $H^n(X_{n-1};\pi_n^X)$ which is finite according to our assumption. So $\Autef(X_{n})$ is finite and its elements are enumerated in Proposition \ref{PF} as well. As the last part of Induction step, we use Corollary \ref{cor} to decide which pairs $([a_{n-1}],\gamma)\in \Autef(X_{n-1})\times \Aut(\pi_n^X)$ satisfy the necessary and sufficient condition \eqref{eqq} by simply going through all cases. If $n=d$ and the required pair exists then $X$ and $Y$ are homotopy equivalent by Corollary \ref{criterion}. If the required pair is missing for a certain $2\leq n\leq d$ then $X$ and $Y$ are not homotopy equivalent.
\end{proof}

If $X$ is a finite dimensional simplicial set, the finiteness of $H^n(X_{n-1};\pi_n^X)$  can be easily verified via the effective homology framework. 
One concrete example of spaces satisfying this property is co$H$-spaces modulo the class of finite abelian groups. 

\begin{defn}
A simply connected simplicial set $X$ is a co$H$-space modulo the class of finite abelian groups $C$ if
there is a map $\phi\colon |X|\to |X|\vee |X|$ such that the compositions $r_j\circ\phi$ are isomorphisms in homology modulo the class $C$. The maps $r_j\colon |X|\vee |X|\to |X|$ are obvious retractions  on the $j$-th component of the wedge product $|X|\vee |X|$. 
\end{defn}

For CW-complex $Y$, the property to be a co$H$-space is equivalent to the property that $Y$ is a union of two open subsets which are contractible in $Y$, that is, the Lusternik-Schnirelmann category of $Y$ is $\le 2$ in original terminology and $<2$ in current terminology. 
That is why in \cite{BA} co$H$-space modulo the class of finite abelian groups is called a space with the Lusternik-Schnirelmann $\le 2$ modulo finite abelian groups. 

\begin{lem} Let a simply connected set $X$ be a co$H$-space modulo the class of finite abelian groups
with an effective Postnikov tower $\{X_n\}$.
Then the groups $H^n(X_{n-1};\pi_n^X)$ are finite.
\end{lem}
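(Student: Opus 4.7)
The plan is to show that $H^n(X_{n-1};\pi_n^X)$ vanishes rationally, and then combine this with a finite generation argument to conclude finiteness. The decisive input is to upgrade the mod-$\mathcal C$ co$H$-structure to a genuine rational comultiplication, which by classical rational homotopy theory forces $X$ to be rationally equivalent to a wedge of spheres. In detail, each composition $r_j\circ\phi\colon X\to X$ induces an isomorphism on $H_*(-;\mathbb Q)$ (since $\mathcal C$ consists of finite abelian groups, hence is killed by $\otimes\mathbb Q$), and is therefore a rational homotopy equivalence by Whitehead's theorem. Composing $\phi_{\mathbb Q}$ with the wedge $\alpha_1\vee\alpha_2$ of rational homotopy inverses of the $r_j\circ\phi_{\mathbb Q}$ produces a map $\tilde\phi\colon X_{\mathbb Q}\to X_{\mathbb Q}\vee X_{\mathbb Q}$ satisfying $r_j\tilde\phi\simeq\id$, i.e.\ a strict rational co-multiplication. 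By the classical structure theorem for simply connected rational co$H$-spaces (F\'elix--Halperin--Thomas), $X_{\mathbb Q}$ is then rationally equivalent to a wedge $\bigvee_i S^{n_i}_{\mathbb Q}$.

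For such a rational wedge of spheres, the rational Hurewicz map is surjective in every degree, since $H_n(\bigvee_i S^{n_i};\mathbb Q)$ is spanned by the fundamental classes of the $S^n$-summands and each of these classes is the Hurewicz image of the spherical inclusion $S^n\hookrightarrow\bigvee_i S^{n_i}$. Naturality of Hurewicz under rational equivalences then gives that
\[
h_n\colon \pi_n(X)\otimes\mathbb Q\longrightarrow H_n(X;\mathbb Q)
\]
is surjective for every $n\ge 2$. Next, apply the Serre spectral sequence to the fibration $X\langle n-1\rangle\to X\xrightarrow{\varphi_{n-1}} X_{n-1}$ whose fibre is the $(n-1)$-connected cover of $X$ (so that $H_n(X\langle n-1\rangle)\cong\pi_n(X)$ by Hurewicz); its rationalised five-term exact sequence reads
\[
H_{n+1}(X;\mathbb Q)\to H_{n+1}(X_{n-1};\mathbb Q)\to \pi_n(X)\otimes\mathbb Q\xrightarrow{\;h_n\;}H_n(X;\mathbb Q)\to H_n(X_{n-1};\mathbb Q)\to 0,
\]
where the penultimate arrow is precisely the rationalised Hurewicz map of $X$. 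Surjectivity of $h_n$ therefore forces $H_n(X_{n-1};\mathbb Q)=0$.

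To conclude, observe that $H^n(X_{n-1};\pi_n^X)$ is a finitely generated abelian group: $\pi_n^X$ is finitely generated, and the Postnikov stage $X_{n-1}$, built by iterated Eilenberg--MacLane fibrations from the $K(\pi_k^X,k)$ with $\pi_k^X$ finitely generated, has finitely generated homology in each degree by Cartan--Serre. Universal coefficients over $\mathbb Q$ then give
\[
H^n(X_{n-1};\pi_n^X)\otimes\mathbb Q\;\cong\;\Hom_{\mathbb Q}\bigl(H_n(X_{n-1};\mathbb Q),\,\pi_n^X\otimes\mathbb Q\bigr)=0,
\]
so that this group has trivial rationalisation and, being finitely generated, is finite. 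The main technical hurdle I foresee is the rectification in Step~1, namely carefully turning the iso-mod-$\mathcal C$ data into a genuine rational co-multiplication and citing the wedge-of-spheres theorem for simply connected rational co$H$-spaces; once that is in hand, the Hurewicz argument and the universal coefficient computation are essentially routine.
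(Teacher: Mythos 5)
Your proof is correct, and it hinges on the same pivotal fact as the paper's: the rational Hurewicz map of $X$ is surjective, which both arguments extract from the theorem that a simply connected co$H$-space modulo finite abelian groups is rationally a wedge of spheres (you obtain this by rectifying to a genuine rational comultiplication and citing F\'elix--Halperin--Thomas; the paper cites Berstein's result directly, which already gives that $\operatorname{coker}h_n$ is finite). Where the two arguments genuinely diverge is in how they convert Hurewicz surjectivity into the rational vanishing of $H^n(X_{n-1};\pi_n^X)$. You run the homological Serre exact sequence of the connected-cover fibration $X\langle n-1\rangle\to X\to X_{n-1}$ and read off $H_n(X_{n-1};\mathbb{Q})=0$ directly from exactness at $H_n(X;\mathbb{Q})$; this is clean and avoids any dualization. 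The paper instead works with the principal fibration $K(\pi_n^X,n)\xrightarrow{i} X_n\to X_{n-1}$ already present in the Postnikov construction: it shows the fibre inclusion has finite homological cokernel in degree $n$ (using $H_n(X)\cong H_n(X_n)$ and the finiteness of $\operatorname{coker}h_n$), applies $\operatorname{Hom}(-,\pi_n^X\otimes\mathbb{Q})$ to deduce that $i^*$ is injective on rational cohomology, and then reads off $H^n(X_{n-1};\pi_n^X\otimes\mathbb{Q})=0$ from the cohomological Serre sequence. Your closing step (finite generation of $H^n(X_{n-1};\pi_n^X)$ plus trivial rationalization implies finiteness) matches the paper's. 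One point to make explicit: the rectification of the comultiplication and the rational wedge-of-spheres theorem both require $X$ to be simply connected of finite type; this is implicitly available here (effective homology yields finitely generated homology in each degree, and Berstein's theorem is likewise stated for finite complexes), but your write-up should record that hypothesis where it is used.
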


\begin{proof}
Berstein \cite[Remark 3.1]{BA} has shown that finite co$H$-space $X$ modulo the class of finite abelian groups is homotopy equivalent modulo finite abelian groups with a wedge of spheres and consequently has the property that $\operatorname{coker} h_n$ of the Hurewicz homomorphism $h_n\colon\pi_n(X)\to H_n(X)$ is finite. (See also \cite{AC}.)

We will prove that it implies that $H^n(X_{n-1};\pi_n^X)$ is also finite. Consider the Serre long exact cohomology sequence of the fibration $K(\pi_n^X,n)\xrightarrow{i} X_n\to X_{n-1}$ with coefficients $\pi_n^X\otimes\Qbb$ which we supress:
\[\dots\to \underbrace{H^{n-1}(K(\pi_n^X,n))}_{0}\to H^{n}(X_{n-1}) \to H^{n}(X_n)\xrightarrow{i^*} H^{n}(K(\pi_n^X,n))\to  H^{n+1}(X_{n-1})\to\dots\]
If we prove that the morphism $i^*$ is a monomorphism, we get that  $H^{n}(X_{n-1};\pi_n^X\otimes\Qbb)$ is trivial 
and $H^{n}(X_{n-1};\pi_n^X)$ is finite.

The fibration $X\to X_n$ has an $n$-connected fiber and hence $H_n(X)\cong H_n(X_n)$. Then the cokernel of the Hurewicz map 
$h_n^{X_n}\colon \pi_n(X_n)\to H_n(X_n)$ is isomorphic to the cokernel of the Hurewicz map $h_n$, and so it is finite. 
From the diagram
\begin{center}
\begin{tikzpicture}
\matrix (m) [matrix of math nodes, row sep=2em,
column sep=2em, minimum width=2em]
{ H_n(K(\pi_n^X,n)) & H_n(X_n) \\
  \pi_n(K(\pi_n^X,n)) & \pi_n(X_n) \\
};
  \begin{scope}[every node/.style={scale=.8}]
\path[<-](m-1-2) edge node[right] {$h_n^{X_n}$} (m-2-2);
\path[->](m-1-1) edge node[above] {$i_*$} (m-1-2);
\path[<-](m-1-1) edge node[left] {$h_n^{K},\cong$} (m-2-1);
\path[->](m-2-1) edge node[below] {$\cong$}  (m-2-2);
\end{scope}
\end{tikzpicture}
\end{center}
we get that $\operatorname{coker} i_*$ is also finite. Applying $\operatorname{Hom}(-,\pi_n^X\otimes\Qbb)$ and Universal Coefficient Theorem we obtain the exact sequence 
$0\to H^n(X_n;\pi_n^X\otimes\Qbb)\xrightarrow{i^*} H^n(K(\pi_n^X,n);\pi_n^X\otimes\Qbb)$,
which shows that $i^*$ is a monomorphism.
\end{proof}

\subsection*{Acknowledgements}
I would like to thank my supervisor Martin Čadek for proposing this problem, for his guidance, and for many valuable suggestions 
and comments throughout  all the drafts of this paper. I would also like to express my gratitude to Lukáš Vokřínek
for helpful discussions and notable ideas.
Furthermore, the author is grateful to anonymous referees for their significant  comments and suggestions.

\end{document}